\title[Symmetry properties]{Symmetry properties of some solutions to some semilinear elliptic equations}
\date{13/09/2014}
\author{Alberto Farina}
\address[A.~Farina]{LAMFA, CNRS UMR 7352\\ Université de Picardie Jules Verne \\ 33 rue Saint-Leu\\ 80039 Amiens\\ France}
\email{alberto.farina@u-picardie.fr }
\author{Andrea Malchiodi}
\address[A.~Malchiodi]{Area of Mathematics\\ SISSA\\ Via Bonomea 265\\ 34136 Trieste\\ Italy}
\email{malchiod@sissa.it}
\author{Matteo Rizzi}
\address[M.~Rizzi]{Area of Mathematics\\ SISSA\\ Via Bonomea 265\\ 34136 Trieste\\ Italy}
\email{mrizzi@sissa.it}
\newcommand{\R}{\mathbb{R}}
\newcommand{\ol}{\overline{\lambda}}
\newcommand{\slk}{\Sigma_{\lambda_{k}}}
\newcommand{\uol}{u_{\overline{\lambda}}}
\newcommand{\sol}{\Sigma_{\overline{\lambda}}}
\newcommand{\ulk}{u_{\lambda_{k}}}
\newcommand{\ul}{\underline{\lambda}}
\newcommand{\stul}{\tilde{\Sigma_{\underline{\lambda}}}}
\newcommand{\stlk}{\tilde{\Sigma_{\lambda_{k}}}}
\newtheorem{theorem}{Theorem}
\newtheorem{proposition}[theorem]{Proposition}
\newtheorem{lemma}[theorem]{Lemma}
\newtheorem{corollary}[theorem]{Corollary}
\theoremstyle{remark}
\theoremstyle{remark}
\begin{document}

\maketitle

\begin{abstract}
In this paper we prove some symmetry results for entire solutions to the semilinear equation $-\Delta u=f(u)$, with $f$ nonincreasing in a right neighbourhood of the origin. We consider solutions decaying only in some directions and we give some sufficient conditions for them to be radially symmetric with respect to those variables, such as periodicity or the pointwise decay of some derivatives. 
\end{abstract}

\begin{center}

\bigskip\bigskip

\centerline{\bf AMS subject classification: 35J61,  35B07, 35B08, 35B09}

\end{center}

\section{Introduction}

In this paper we consider positive bounded solutions to the equation
\begin{eqnarray}
-\Delta u=f(u)
\label{eqp}
\end{eqnarray}
on $\mathbb{R}^{N}$. The nonlinearity will always be a $C^{1}$ function decreasing in a right neighborhood of the origin, that is
\begin{eqnarray}
f^{'}(s)\leq 0 &\text{ for $s\in(0,\varepsilon)$, for some $\varepsilon>0$, and $f(0)=0$.}
\label{condf}
\end{eqnarray}

%The aim is to establish some symmetry results, under suitable assumptions about the decay of the solutions and the nonlinearity.
%Throught the paper, we will always assume that

%. The nonlinearity $f$ will always be supposed to be a $C^{1}$ function satisfying

The aim is to establish some symmetry results. In \cite{GNN} Gidas, Ni and Nirenberg proved the following theorem.
\begin{theorem}\cite{GNN}
Let $u>0$ be a solution to equation (\ref{eqp}), with $f$ satisfying condition (\ref{condf}). Assume furthermore that
\begin{eqnarray}
u(x)\to 0 &\text{as $|x|\to\infty$}.
\label{decunif}
\end{eqnarray}
Then, up to a translation, $u$ is radially symmetric and decreasing to $0$, that is $u=u(|x|)$,
with $\partial u/\partial r(x)<0$, for any $x\neq 0$.
\label{th1}
\end{theorem}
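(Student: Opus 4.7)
I would implement the classical \emph{moving plane method} of Alexandrov--Serrin, applied in every direction $e\in S^{N-1}$, using (\ref{decunif}) both to initialize the procedure and to control behaviour at infinity. Fix $e=e_{1}$: for $\lambda\in\R$ set $\Sigma_\lambda=\{x_{1}<\lambda\}$, $T_\lambda=\partial\Sigma_\lambda$, $x^\lambda=(2\lambda-x_{1},x')$, $u_\lambda(x)=u(x^\lambda)$, and $w_\lambda=u_\lambda-u$. Then in $\Sigma_\lambda$,
\[
-\Delta w_\lambda = c_\lambda(x)\,w_\lambda,\qquad w_\lambda=0 \text{ on } T_\lambda,\qquad w_\lambda\to 0 \text{ at infinity,}
\]
with $c_\lambda(x)=\int_{0}^{1}f'(u+t\,w_\lambda)\,dt\in L^{\infty}$; the decay of $w_\lambda$ follows because on $\Sigma_\lambda$ the condition $|x|\to\infty$ forces either $|x'|\to\infty$ or $x_{1}\to-\infty$, hence $|x^\lambda|\to\infty$ as well. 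The plan is to identify the largest $\lambda$ for which $w_\lambda\ge 0$ throughout $\Sigma_\lambda$ and to prove that at that critical value $w_\lambda\equiv 0$, giving symmetry of $u$ across $T_{\ol}$.

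For the \emph{initialization}, take $\lambda$ so negative that $\Sigma_\lambda\subset\{|x|>|\lambda|\}$ and $u<\varepsilon$ on $\Sigma_\lambda$ (by (\ref{decunif})). On the open set $V=\{w_\lambda<0\}\subset\Sigma_\lambda$ one has $0<u_\lambda<u<\varepsilon$, so (\ref{condf}) yields $c_\lambda\le 0$ on $V$ and hence $-\Delta w_\lambda\ge 0$ there; since $w_\lambda=0$ on $\partial V$ and $w_\lambda\to 0$ at infinity, the maximum principle for bounded superharmonic functions in unbounded domains forces $V=\emptyset$. Now \emph{slide} the plane: define
\[
\ol=\sup\bigl\{\lambda\in\R:\,w_\mu\ge 0 \text{ on } \Sigma_\mu \text{ for every } \mu\le\lambda\bigr\}.
\]
Then $\ol<+\infty$, for otherwise $u$ would be nondecreasing in $x_{1}$ on all of $\R^{N}$, hence $\le\lim_{x_{1}\to+\infty}u=0$, contradicting $u>0$. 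By continuity, $\uol\ge u$ on $\sol$.

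The \emph{critical step}, which I expect to be the main obstacle, is to prove $\uol\equiv u$ on $\sol$. If not, the strong maximum principle would give $\uol>u$ strictly on $\sol$, and one would seek $\delta>0$ small with $w_{\ol+\delta}\ge 0$ on $\Sigma_{\ol+\delta}$, contradicting maximality of $\ol$. The standard argument decomposes $\Sigma_{\ol+\delta}$ into three regions: (i) a compact core $K=\overline{\Sigma}_{\ol-\eta}\cap\overline{B}_{R}$, on which $\uol-u$ is bounded below by a positive constant and hence, by continuity in $\lambda$, $w_{\ol+\delta}>0$ on $K$ for $\delta$ small; (ii) the exterior $\Sigma_{\ol+\delta}\setminus B_{R}$, where choosing $R$ large makes $u<\varepsilon$ and the initialization argument reapplies to the associated negative set; (iii) the narrow strip $\{\ol-\eta\le x_{1}<\ol+\delta\}\cap B_{R}$, which has small measure for $\eta,\delta$ small, to which a maximum principle in domains of small measure (Berestycki--Nirenberg--Varadhan type) applies, since $c_{\ol+\delta}$ is uniformly bounded and $w_{\ol+\delta}\ge 0$ on the strip's boundary by (i)--(ii). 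Combining these three facts yields $w_{\ol+\delta}\ge 0$ on $\Sigma_{\ol+\delta}$, the contradiction sought; hence $\uol\equiv u$ and $u$ is symmetric across $T_{\ol}$. Repeating this for every direction $e\in S^{N-1}$ produces, for each $e$, a hyperplane of symmetry of $u$; since $u$ attains a global maximum (by (\ref{decunif})) that, by the strict monotonicity established along each direction, must lie on every such hyperplane, the hyperplanes share a single common point $x_{0}$. After translating $x_{0}$ to the origin, $u$ is radial, and the strict inequality $w_\mu>0$ on $\Sigma_\mu$ for $\mu<\ol$ (again by the strong maximum principle) translates into $\partial u/\partial r<0$ for $x\ne 0$.
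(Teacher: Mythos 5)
Your proposal is correct in outline, but it takes a genuinely different route from the paper's. You implement the classical Gidas--Ni--Nirenberg/Berestycki--Nirenberg strategy: establish the critical plane $T_{\ol}$, then, assuming $u_{\ol}>u$ strictly on $\Sigma_{\ol}$, push the plane slightly past $\ol$ by decomposing $\Sigma_{\ol+\delta}$ into a compact core, an exterior where the linearization has nonpositive coefficient, and a thin strip to which a narrow-domain maximum principle applies. The paper instead proves this theorem as the case $M=0$ of Theorem~\ref{th2}, closing the moving plane by a compactness argument (Proposition~\ref{profile}): it extracts a sequence $\lambda_{k}\uparrow\ol$ and bounded witness points $x^{k}\in\Sigma_{\lambda_{k}}$ with $u(x^{k})<u_{\lambda_{k}}(x^{k})$, passes to a limit $x^{\infty}$ which must lie on $T_{\ol}$, and derives a contradiction with the Hopf lemma via the mean value theorem applied to $u_{\lambda_{k}}-u$. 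Each approach has its virtue: yours is the shorter and more familiar route when full decay (\ref{decunif}) is available; the paper's compactness argument is designed precisely because in the more general situations treated later (periodicity in $y$, decay in only some variables), the negative set $\{u<u_{\lambda}\}$ need not be confined to a thin slab of finite measure, so the narrow-domain principle would not apply, while extracting converging witness sequences still works.

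One caution on your step: as written, items (ii) and (iii) are circular. You claim $w_{\ol+\delta}\ge 0$ on the portion of the strip boundary lying on $\partial B_{R}$ ``by (ii)'', but (ii) itself needs $w_{\ol+\delta}\ge 0$ on $\partial B_{R}$ to apply the maximum principle on the exterior negative set. The standard fix is to treat the full negative set $V=\{w_{\ol+\delta}<0\}\cap\Sigma_{\ol+\delta}$ in one stroke: by (i), $V\cap B_{R}$ lies in the strip (small measure), and on $V\setminus B_{R}$ one has $c_{\ol+\delta}\le 0$; thus $c_{\ol+\delta}^{+}$ is supported in a set of arbitrarily small measure, and a single application of the maximum principle for such operators in unbounded domains (e.g.\ the version of Berestycki--Caffarelli--Nirenberg used throughout this paper as \cite{BCN}, Lemma~2.1, combined with the decay $w_{\ol+\delta}\to 0$ at infinity) yields $V=\emptyset$. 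With that repair your argument is complete.
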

 
The main problem we are concerned with is the following: if we replace the decay hypothesis (\ref{decunif}) by the weaker assumptions that $u$ is bounded and satisfies 
\begin{eqnarray}
u(y,z)\to 0 &\text{ as $|z|\to\infty$, uniformly in $y$},
\label{decxn}
\end{eqnarray}

where we have set $x=(y,z)$, with $y\in\R^{M}$, $z\in\R^{N-M}$, is it true that $u$ is radially symmetric in $z$, that is $u=u(y,|z-z^{0}|)$, for some $z^{0}$, with $u_{j}(y,z)<0$ for $z_{j}>z^{0}_{j}$, $1\leq j\leq N-M$, where we have set $u_{j}=\partial u/\partial z_{j}$?

In the sequel, we will give some sufficient conditions for this to be true. An example of sufficient condition to get symmetry is periodicity in the $y$ variables.

We say that a function $u:\R^{N}\to\R$ is periodic in $y$ of period $T=(T_{1},\dots,T_{M})$ if, for any $(y,z)\in\R^{N}$, 
\begin{eqnarray}\notag
u(y+T_{j}e_{j},z)=u(y,z) &\text{for $1\leq j\leq M$}
\end{eqnarray}
where $\{e_{1},\dots,e_{M}\}$ denotes the standard basis in $\R^{M}$. %Heuristically, once we know the function in the periodicity cell $[0,T_{1}]\times\dots\times[0,T_{M}]\times\R^{M-N}$, we know it everywhere. Note that periodicity cells are compact if and only if $M=N$.
%In next theorem, we will concentrate on the case $M=N-1$ and we will denote $x=(x^{'},x_{N})\in\R^{N-1}\times\R$.

%\begin{theorem}
%Let $u>0$ be a bounded solution to equation (\ref{eqp}) satisfying (\ref{decxn}), with $f$ satisfying (\ref{condf}). Assume furthermore that $u$ is periodic in $x^{'}$. Then $u$ is symmetric with respect to the $x_{N}$ variable, that is $u=u(x^{'},|x_{N}-\lambda|)$, for some $\lambda\in\R$, with $u_{N}(x^{'},x_{N})<0$ for $x_{N}>\lambda$.
%\label{th1n}
%\end{theorem} 

%It is possible to see that the case where $u$ is periodic in $x_{N}$ is trivial, since this implies, together with the decay hypothesis (\ref{decxn}), that $u\equiv 0$; therefore, in theorem (\ref{th1n}), it is not restrictive to suppose that $u$ is not periodic in $x_{N}$.  

%A similar result can be shown in the case of solutions that are periodic in $M$ variables and decay in the remaining $N-M$ directions. 

\begin{theorem}
Let $u>0$ be a bounded solution to equation (\ref{eqp}), with $f$ satisfying (\ref{condf}). Let us write $x=(y,z)\in\R^{M}\times\R^{N-M}$, and assume that

$(i)$ $u$ is periodic in $y$

$(ii)$ $u(y,z)\to 0$ as $|z|\to\infty$, uniformly in $y$.

Then $u$ is radially symmetric and decreasing with respect to $z$, that is $u=u(y,|z-z_{0}|)$,
and $u_{j}(y,|z-z_{0}|)<0$ for $z^{j}>z^{j}_{0}$, $1\leq j\leq N-M$, for some $z_{0}\in\R^{N-M}$.
\label{th2}
\end{theorem}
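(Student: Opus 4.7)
Plan. I would prove the theorem via the moving planes method applied to hyperplanes orthogonal to directions in the $z$-subspace, exploiting the periodicity in $y$ to reduce to a cylindrical setting. Fix a unit vector $\omega\in\R^{N-M}$. For $\lambda\in\R$ set $\Sigma_\lambda=\{(y,z):z\cdot\omega>\lambda\}$, let $u_\lambda(y,z)$ denote $u$ composed with the reflection of $z$ across $\{z\cdot\omega=\lambda\}$, and let $w_\lambda=u_\lambda-u$. Since the reflection affects only the $z$-coordinates, $w_\lambda$ remains periodic in $y$, and by hypothesis (ii) it vanishes at infinity in $z$ uniformly in $y$; thus $w_\lambda$ lives naturally on the quotient cylinder $\mathbb{T}^M\times\{z\cdot\omega>\lambda\}$ with zero boundary values and decay at infinity.

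The starting step is direct: for $\lambda$ large, $z\cdot\omega>\lambda$ forces $|z|>\lambda$, so (ii) yields $u<\varepsilon$ on all of $\Sigma_\lambda$. On the open set $\{w_\lambda<0\}$ we have $0<u_\lambda<u<\varepsilon$, so by (\ref{condf}) the coefficient $c_\lambda(x):=(f(u_\lambda)-f(u))/(u_\lambda-u)$ is nonpositive. Since $-\Delta w_\lambda=c_\lambda w_\lambda$, the function $w_\lambda^-$ satisfies $-\Delta w_\lambda^-\leq 0$ weakly, vanishes on $\partial\Sigma_\lambda$, and decays at infinity; the maximum principle on the cylinder gives $w_\lambda^-\equiv 0$, i.e.\ $w_\lambda\geq 0$ in $\Sigma_\lambda$. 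Set $\bar\lambda(\omega):=\inf\{\lambda_0: w_\lambda\geq 0\text{ in }\Sigma_\lambda\text{ for every }\lambda\geq\lambda_0\}$; this infimum is finite, since otherwise sending $\lambda\to-\infty$ in $u_\lambda\geq u$ and using the decay of $u$ in $z$ would force $u\leq 0$. By continuity $w_{\bar\lambda}\geq 0$ in $\Sigma_{\bar\lambda}$.

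The core of the proof is to show $w_{\bar\lambda}\equiv 0$. Arguing by contradiction, the strong maximum principle applied to the linearized equation gives $w_{\bar\lambda}>0$ strictly, and Hopf's lemma then yields $\partial_\omega u<0$ on $\{z\cdot\omega=\bar\lambda\}$. The main obstacle, and the technically hardest part, is to show this forces $w_\lambda\geq 0$ in $\Sigma_\lambda$ for some $\lambda<\bar\lambda$ close to $\bar\lambda$, contradicting minimality. The difficulty is that near the new boundary $\{z\cdot\omega=\lambda\}$, $u$ need not be small, so the sign of $c_\lambda$ is not controlled globally. My plan is to split $\Sigma_\lambda$ into (a) a far region $\{|z|>R\}$ where $R$ is chosen from (ii) so that $u<\varepsilon$ and the starting-step argument applies, and (b) a near region whose quotient by the $y$-periodicity has compact closure; on a slightly shrunken piece of (b) the strict positivity $w_{\bar\lambda}\geq\delta>0$ and continuity in $\lambda$ keep $w_\lambda$ positive, while the remaining thin slab along the boundary is handled by the Berestycki--Nirenberg maximum principle in narrow domains, whose applicability rests on the small measure of the slab within one period.

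Once the symmetry $w_{\bar\lambda(\omega)}\equiv 0$ has been established for every $\omega\in S^{N-M-1}$, the classical argument from \cite{GNN} shows that all symmetry hyperplanes pass through a single point $z_0$, so $u(y,\cdot)$ is radial in $z$ about $z_0$; the strict monotonicity $u_j<0$ for $z^j>z_0^j$ then follows from the strong maximum principle applied to the linear equation satisfied by $u_j$.
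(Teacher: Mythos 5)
Your approach is sound and is essentially the classical Gidas--Ni--Nirenberg narrow-domain argument adapted to the periodic cylinder, but it differs from the paper's proof at the key step of showing $u\equiv u_{\bar\lambda}$. The paper (Propositions~\ref{profile} and \ref{profile1}) does not use the narrow-domain maximum principle at all: it extracts a sequence $\lambda_k\nearrow\bar\lambda$ and points $x^k=(y^k,z^k)\in\Sigma_{\lambda_k}$ with $u(x^k)<u_{\lambda_k}(x^k)$, proves that $z^k$ stays bounded via a barrier/maximum-principle argument exploiting the decay in $z$ and the $c\leq 0$ sign in the far region, uses periodicity in $y$ to make $y^k$ bounded modulo periods, then passes to a limit $x^\infty\in\partial\Sigma_{\bar\lambda}$ and contradicts Hopf's lemma by writing $u_{\lambda_k}(x^k)-u(x^k)$ via the mean value theorem and taking $k\to\infty$. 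Both routes work, but the paper's compactness route sidesteps precisely the subtlety you correctly flag as the hardest part of yours: the bad set $\{w_\lambda<0\}\cap\Sigma_\lambda$ need not be contained in the thin slab, since it also extends into the far region $\{|z|>R\}$, so neither the narrow-domain maximum principle nor the $c_\lambda\leq 0$ maximum principle applies to that set directly. To close your argument one needs an additional combination lemma (for instance, a Poincar\'e-type inequality on the slab feeding into an integration-by-parts estimate that uses $c_\lambda\leq 0$ outside the slab, or a suitable ABP-type bound). Such lemmas exist, so your plan can be completed, but the paper's compactness argument avoids the issue entirely and is arguably cleaner once periodicity is available, since periodicity supplies the needed boundedness of $y^k$ for free.
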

\noindent\textbf{Remark 1.} In particular, in the case $M=0$, this result reduces to Theorem \ref{th1} by Gidas, Ni and Nirenberg, of which we give an alternative proof. 
\\

An interesting case is represented by the semilinear equation
\begin{eqnarray}
-\Delta u+u=u^{p}
\label{power}
\end{eqnarray}
with $1<p<\frac{N+1}{N-3}$ if $N>3$ and $p>1$ if $2\leq N\leq 3$. This equation arises naturally in several scientific contexts, for example as the nonlinear-Schrodinger equation in quantum mechanics but also biology, for instance in the study of the reaction-diffusion system proposed by Gierer and Meinhardt in 1972. For further informations, we refer to the papers \cite{GM,M}.

Dancer in \cite{D} showed that, for sufficiently large $T$, there exists a solution $u_{T}$ to (\ref{power}) fulfilling the following properties:
\begin{itemize}
\item $u_{T}(x)$ is even and periodic in $y$ with period $T$,
\item $u_{T}(x)$ is radially symmetric in $z$,
\item $u_{T}(y,z)\to 0$ exponentially fast as $|z|\to\infty$, uniformly in $y$, 
\end{itemize}
where we have set $x=(y,z)\in\R\times\R^{N-1}$.

Theorem \ref{th2}, in the case $M=1$, shows that any solution which is even and periodic in $y$ and decays in the other variables has to be symmetric in $z$, like Dancer' s solution. These results with periodicity will be proved in Sections $1$ and $2$.
\\

After that, we will consider solutions fulfilling (\ref{decxn}) and
\begin{eqnarray}
\text{for any $x_{N}$, }\nabla_{x^{'}}u(x^{'},x_{N})\to 0 &\text{as $|x|^{'}\to\infty$}.
\label{Neumann}
\end{eqnarray}
In order to investigate the behaviour of this kind of solutions, it is useful to study the problem
\begin{eqnarray}
\begin{cases}
-v^{''}=f(v) &\text{on $\R$}\\
v\geq 0\\
v(t)\to 0 &\text{as $|t|\to\infty$.}
\end{cases}
\label{ODE}
\end{eqnarray}
By the Cauchy uniqueness Theorem, $v>0$ or $v\equiv 0$. We will show that, if there exists a positive solution to (\ref{ODE}), then it is unique. It turns out that it is worth to distinguish the cases in which such a positive solution exists or not. %On this purpose, we introduce the set
%\begin{eqnarray}\notag
%\mathcal{F}=\{f\in C^{1}(\R):f^{'}(s)\leq 0 &\text{for $s\in(0,\varepsilon)$ %and problem (\ref{ODE}) has no positive solution}\}
%\end{eqnarray}

\begin{theorem}
Let $f$ be a function fulfilling condition (\ref{condf}) such that problem (\ref{ODE}) admits no positive solution. Let $u>0$ be a bounded solution to
\begin{eqnarray}
\begin{cases}
-\Delta u=f(u) &\text{in $\R^{N}$}\\
u(x^{'},x_{N})\to 0 &\text{as $|x_{N}|\to\infty$, uniformly in $x^{'}$}\\
\nabla_{x^{'}}u(x^{'},x_{N})\to 0 &\text{as $|x^{'}|\to\infty$, for any $x_{N}$}
\end{cases}
\label{probn}
\end{eqnarray}
Then $u$ is radially symmetric, that is $u=u(|x-y|)$, for an appropriate $y\in\R^{N}$.
\label{thprof1d=0}
\end{theorem}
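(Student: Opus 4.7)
The strategy is to reduce to the uniform decay assumption of Theorem \ref{th1} via a translation/compactness argument, exploiting the non-existence of a 1D profile in a crucial way. Once we show $u(x)\to 0$ as $|x|\to\infty$, the conclusion follows immediately from the Gidas--Ni--Nirenberg theorem.

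The core step is a blow-down along $x'$. Given any sequence $\{a_k\}\subset\R^{N-1}$ with $|a_k|\to\infty$, set $u_k(x',x_N):=u(x'+a_k,x_N)$. By standard interior elliptic regularity applied to bounded solutions of $-\Delta u=f(u)$ with $f\in C^1$, the family $\{u_k\}$ is bounded in $C^{2,\alpha}_{\mathrm{loc}}(\R^N)$, so a subsequence converges in $C^2_{\mathrm{loc}}$ to a nonnegative bounded $v$ with $-\Delta v=f(v)$. The hypothesis $\nabla_{x'}u(x',x_N)\to 0$ as $|x'|\to\infty$ for every $x_N$, combined with $C^2_{\mathrm{loc}}$ convergence, forces $\nabla_{x'}v\equiv 0$, so $v=v(x_N)$ solves $-v''=f(v)$. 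Moreover, the uniform decay of $u$ in $x_N$ passes to the limit and gives $v(x_N)\to 0$ as $|x_N|\to\infty$. Thus $v$ solves problem (\ref{ODE}); by the standing hypothesis no positive such solution exists, and by Cauchy uniqueness $v\equiv 0$. Since the limit is zero along every such subsequence, the whole family $u_k$ tends to $0$ locally uniformly on $\R^N$ as $|a_k|\to\infty$.

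From this I deduce full uniform decay: suppose, for contradiction, that there exist $\varepsilon>0$ and points $x_k=(x'_k,x^N_k)$ with $|x_k|\to\infty$ and $u(x_k)\geq\varepsilon$. The decay hypothesis in $x_N$ forces $\{x^N_k\}$ to be bounded, so $|x'_k|\to\infty$ and, after extraction, $x^N_k\to\bar x_N$. Applying the previous step with $a_k=x'_k$ yields $u_k\to 0$ uniformly on any compact set of $\R^N$; evaluating at $(0,x^N_k)\to (0,\bar x_N)$ gives $u(x_k)\to 0$, a contradiction. Hence $u(x)\to 0$ as $|x|\to\infty$, and Theorem \ref{th1} applies to give the radial symmetry of $u$.

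The main technical point is the passage-to-the-limit argument in the first paragraph above: one must ensure simultaneously that (a) the $C^2_{\mathrm{loc}}$ limit inherits the pointwise decay of $\nabla_{x'}u$ so that $v$ is truly one-dimensional, and (b) it also inherits the uniform-in-$x'$ decay in $x_N$, so that $v$ qualifies as a solution of (\ref{ODE}). Both are straightforward given the $C^{2,\alpha}$ bounds, but they are the place where the hypotheses of (\ref{probn}) are each used in an essential way; the non-existence hypothesis on (\ref{ODE}) is then the precise ingredient that rules out a non-trivial 1D limit and allows one to close the argument.
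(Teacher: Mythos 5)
Your argument is correct and follows essentially the same route as the paper: translate in the $x'$ directions, pass to a one-dimensional profile using the hypothesis $\nabla_{x'}u\to 0$, invoke the non-existence of positive solutions to (\ref{ODE}) to force the profile to vanish, deduce full uniform decay $u(x)\to 0$ as $|x|\to\infty$, and finish with Gidas--Ni--Nirenberg. The only difference is cosmetic: the paper delegates the one-dimensionality of profiles to Lemma \ref{lemmaprof1d} and the passage from vanishing profiles to uniform decay to Lemma \ref{lemmasolrad}, whereas you reprove both facts inline via the same $C^2_{\mathrm{loc}}$ compactness and contradiction arguments.
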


We observe that, if $f(t)=0$ for any $0<t<\delta$, for some $\delta>0$, then problem (\ref{ODE}) has no positive solution. In fact, for $t$ large enough, $v$ has to be affine, that is $v(t)=at+b$, but $v(t)\to 0$ as $t\to\infty$, hence $a=b=0$; by the Cauchy uniqueness theorem, $v\equiv 0$. As a consequence, in this case, Theorem \ref{thprof1d=0} holds true. For this kind of nonlinearities, in dimension $N=2$, we can get a non-existence result.
\begin{corollary}
Let $N=2$. Let $f$ be a $C^{1}(\R)$ function such that $f(t)=0$ for any $0<t<\delta$, for a suitable $\delta>0$. Then the only bounded solution $u\geq 0$ to (\ref{probn}) is $u\equiv 0$.
\label{nonex2}
\end{corollary}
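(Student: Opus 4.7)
My plan is to argue by contradiction. Suppose $u\not\equiv 0$ is a bounded nonnegative solution of (\ref{probn}) in $\R^{2}$. Since $f\in C^{1}$ with $f(0)=0$, one can rewrite the equation as $-\Delta u-c(x)u=0$ with the bounded coefficient $c(x):=f(u(x))/u(x)$ (set equal to $f'(0)$ on $\{u=0\}$), and the strong maximum principle upgrades the hypothesis $u\geq 0$, $u\not\equiv 0$ to $u>0$ throughout $\R^{2}$. This is the step that makes Theorem~\ref{thprof1d=0} applicable to $u$.

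Next, the ODE-nonexistence hypothesis of Theorem~\ref{thprof1d=0} is supplied by the remark preceding the corollary: a nontrivial positive solution of (\ref{ODE}) would be affine for $t$ large and could not decay. Hence Theorem~\ref{thprof1d=0} yields $u(x)=\varphi(|x-y|)$ for some $y\in\R^{2}$ and some $C^{2}$ profile $\varphi$. Evaluating $u$ at $x=(y_{1},y_{2}+r)$ and using the uniform decay of $u$ in $x_{2}$ gives $\varphi(r)\to 0$ as $r\to\infty$.

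Finally I would pick $R_{0}$ so that $\varphi(r)<\delta$ for $r\geq R_{0}$. Because $f$ vanishes on $[0,\delta)$, the function $u$ is harmonic on $\{|x-y|>R_{0}\}$, so the radial profile satisfies $\varphi''(r)+\varphi'(r)/r=0$, i.e.\ $\varphi(r)=a+b\log r$. The assumption $N=2$ is indispensable here: decay at infinity of a planar radial harmonic function forces both $a=0$ and $b=0$, whereas for $N\geq 3$ the decaying mode $b\,r^{2-N}$ would leave room for a nontrivial tail. Therefore $\varphi\equiv 0$ on $[R_{0},\infty)$, contradicting $u>0$ throughout $\R^{2}$, and completing the proof. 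The main subtlety of the scheme is the initial passage from $u\geq 0$ to $u>0$, without which Theorem~\ref{thprof1d=0} would not be available; once radial symmetry is secured the two-dimensional asymptotics of harmonic functions finishes the argument immediately.
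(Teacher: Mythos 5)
Your argument is correct and follows essentially the same route as the paper: invoke the remark showing (\ref{ODE}) has no positive solution, apply Theorem~\ref{thprof1d=0} to get radial symmetry, note $u$ is harmonic outside a large ball so the profile is $a+b\log r$, and conclude from the decay hypothesis that $a=b=0$, a contradiction. The only difference is that you explicitly spell out the strong-maximum-principle upgrade from $u\geq 0,\ u\not\equiv 0$ to $u>0$, which the paper leaves implicit; that is a welcome clarification but not a change of method.
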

A relevant example of nonlinearity of this type is $f(u)=((u-\beta)^{+})^{p}$, with $p>1$. In this case, when $N\geq 3$ and $1<p<\frac{N+2}{N-2}$, L. Dupaigne and A. Farina in \cite{DF} showed that the radially symmetric solution is unique and found the explicit expression 
\begin{eqnarray}\notag
u(x)=
\begin{cases}
\phi_{R}(|x|)+\beta &\text{for $|x|\leq R$}\\
\alpha|x|^{2-N} &\text{for $|x|\geq R$}
\end{cases}
\end{eqnarray}
where 
\begin{eqnarray}\notag
R=\bigg(\frac{1}{\beta(N-2)}\int_{0}^{1}\phi_{1}^{p}(r)r^{N-1}dr\bigg)^{(p-1)/2},
\end{eqnarray}
$\alpha=\beta R^{N-2}$ and $\phi_{R}$ is the unique radially symmetric and radially decreasing solution to the problem
\begin{eqnarray}\notag
\begin{cases}
-\Delta\phi_{R}=\phi_{R}^{p} &\text{for $|x|\leq R$}\\
\phi_{R}=0 &\text{on $|x|=R$}\\
\phi_{R}>0 &\text{in $|x|<R$}\\
\frac{\partial\phi_{R}}{\partial r}<0 &\text{for $0<|x|\leq R$}
\end{cases}
\end{eqnarray}
This example shows that, in dimension $N\geq 3$, Corollary \ref{nonex2} is not true.

With similar techniques, we obtain a lower bound for the $L^{\infty}$-norm of nontrivial solutions to equation (\ref{power}), decaying in one variable and fulfilling (\ref{Neumann}). In \cite{K}, Kwong showed that there exists a unique (up to a translation) positive radially symmetric solution to equation (\ref{power}), that we will denote by $U$. We observe that 
\begin{eqnarray}\notag
\max U>\bigg(\frac{p+1}{2}\bigg)^{\frac{1}{p-1}}.
\end{eqnarray}
In fact, 
up to a translation, we can assume that $\max U=U(0)$, that is $U(x)=v(|x|)$, where $v$ is a solution to the ODE
\begin{eqnarray}\notag
-v^{''}-\frac{N-1}{r}v^{'}(r)=f(v(r))
\end{eqnarray}
with $f(t)=t^{p}-t$. Multiplying the equation by $v^{'}$ and integrating, we get 
\begin{eqnarray}\notag
\frac{d}{dr}\bigg(\frac{1}{2}(v^{'}(r))^{2}+F(v(r))\bigg)=
-\frac{N-1}{r}(v^{'})^{2}<0
\end{eqnarray}
for $r>0$, with $F(t)=\frac{1}{p+1}t^{p+1}-\frac{1}{2}t^{2}$. So the energy $E(r)=\frac{1}{2}(v^{'}(r))^{2}+F(v(r))$ is strictly decreasing and $E(r)\to 0$ as $r\to\infty$. Therefore $E(r)>0$ for any $r$, in particular $E(0)=F(U(0))>0$, hence $\max U>(p+1/2)^{1/p-1}$.

This observation will be useful to prove the following proposition.

\begin{proposition}
Let $u>0$ be a bounded solution to equation (\ref{power}) satisfying condition (\ref{decxn}) with $z=x_{N}$. Assume that $\nabla_{x^{'}}u(x^{'},x_{N})\to 0$ as $|x^{'}|\to\infty$, for any $x_{N}$.
Then 
\begin{eqnarray}\notag
||u||_{\infty}\geq\bigg(\frac{p+1}{2}\bigg)^{1/p-1}.
\end{eqnarray}

\label{proprad}
\end{proposition}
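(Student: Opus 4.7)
The plan is a dichotomy on the behaviour of $u$ at spatial infinity. Set $M := \|u\|_\infty$. Either (A) $u(x)\to 0$ as $|x|\to\infty$, or (B) there exist $\delta > 0$ and a sequence $x_n = (x'_n, t_n)$ with $|x_n|\to\infty$ and $u(x_n)\geq \delta$; I treat these separately.

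In case (A), the nonlinearity $f(s) = s^p - s$ satisfies (\ref{condf}), so Theorem \ref{th1} applies and $u(x) = v(|x - y|)$ for some $y\in\R^N$. The radial energy identity for $v$ carried out immediately before the statement of Proposition \ref{proprad} then gives $M = v(0) > \bigl(\tfrac{p+1}{2}\bigr)^{1/(p-1)}$, so there is nothing more to do.

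In case (B), the uniform decay of $u$ in $x_N$ forces $\{t_n\}$ to be bounded, so up to a subsequence $t_n \to t_\star$ and $|x'_n| \to \infty$. The translates $u_n(y', y_N) := u(y' + x'_n,\, y_N)$ are uniformly bounded, so by Schauder estimates for $-\Delta w + w = w^p$ they converge, along a further subsequence, in $C^2_{\mathrm{loc}}(\R^N)$ to a nonnegative bounded entire solution $\tilde u$ of (\ref{power}) with $\tilde u(0, t_\star) \geq \delta$. The hypothesis that $\nabla_{x'}u(x', x_N) \to 0$ as $|x'|\to\infty$ for each fixed $x_N$, evaluated at $x' = y' + x'_n$ and combined with the uniform convergence of $\nabla u_n$ on compact sets, yields $\nabla_{y'}\tilde u \equiv 0$. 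Therefore $\tilde u(y', y_N) = \tilde v(y_N)$ with $\tilde v \not\equiv 0$, hence $\tilde v > 0$ by Cauchy uniqueness, and $\tilde v(t) \to 0$ as $|t|\to\infty$, since the decay of $u$ in $x_N$ is uniform in $x'$ and is thus inherited by the limit.

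Hence $\tilde v$ is a positive solution of the one-dimensional problem (\ref{ODE}) with $f(v) = v^p - v$. The 1D Hamiltonian $\tfrac{1}{2}\tilde v'(t)^2 + F(\tilde v(t))$, with $F(t) = \tfrac{1}{p+1}t^{p+1} - \tfrac{1}{2}t^2$, is conserved (no $(N-1)/r$ term in one dimension) and vanishes at $\pm\infty$ by elliptic decay of $\tilde v$ and $\tilde v'$; hence it is identically zero. Evaluating at a maximum point of $\tilde v$ gives $F(\max \tilde v) = 0$, that is $\max \tilde v = \bigl(\tfrac{p+1}{2}\bigr)^{1/(p-1)}$, and $M \geq \max \tilde v$ closes the argument. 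The main delicate step is the passage from the pointwise-in-$x_N$ gradient decay to $\nabla_{y'}\tilde u \equiv 0$: it is crucial here that the hypothesis holds for every fixed $x_N$ and that $u_n \to \tilde u$ in $C^1_{\mathrm{loc}}$, so that the pointwise values of $\nabla_{y'}\tilde u(y', y_N)$ coincide with the limits $\lim_n \nabla_{x'}u(y' + x'_n, y_N) = 0$.
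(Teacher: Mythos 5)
Your proposal is correct and takes essentially the same approach as the paper: both reduce to the observation that any positive one-dimensional profile obtained by translating in $x'$ has its maximum pinned at exactly $\bigl(\tfrac{p+1}{2}\bigr)^{1/(p-1)}$ by the conserved Hamiltonian, and that if no such positive profile exists then $u$ decays at infinity, is radial by Gidas--Ni--Nirenberg, and the radial energy identity gives the bound. The only difference is organizational---you run a direct dichotomy (and re-derive the one-dimensionality of the profile and the Hamiltonian identity inline), whereas the paper argues by contradiction and invokes Lemmas \ref{lemmaprof1d}, \ref{lemmaODE} and \ref{lemmasolrad}.
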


Anyway, there are examples of nonlinearities for which problem (\ref{ODE}) admits a positive solution, such as $f(u)=|u|^{p-1}u-u$.
In order to deal with this case, we consider the energy-like functional
\begin{eqnarray}\notag
H(u,x^{'})=\int_{-\infty}^{\infty}\frac{1}{2}\big(u_{N}^{2}-
|\nabla_{x^{'}}u|^{2}\big)-F(u)dx_{N}
\end{eqnarray}  
and, for any $\lambda\in\mathbb{R}$, the momentum
\begin{eqnarray}\notag
E_{\lambda}(u,x^{'})=\int_{-\infty}^{\infty}(x_{N}-\lambda)\bigg(
\frac{1}{2}\big(u_{N}^{2}-|\nabla_{x^{'}}u|^{2}\big)-F(u)\bigg)dx_{N}.
\end{eqnarray} 
In the above definitions, we have denoted by $F(u)=\int_{0}^{u}f(t)dt$, the primitive of $f$ vanishing at the origin.
\\

\noindent\textbf{Remark 2.} If $f^{'}(0)<0$, condition (\ref{decxn}) with $z=x_{N}$ is sufficient for the energy and the momentum to be well defined and finite, since $u$ and $\nabla u$ actually decay exponentially in $x_{N}$, that is
\begin{eqnarray}
u(x),|\nabla u(x)|\leq Ce^{-\gamma|x_{N}|} &\text{for $|x_{N}|\geq M$,}
\label{expdecay}
\end{eqnarray}
for suitable constants $M>0$, $\gamma>0$.
\\

If $f^{'}(0)=0$, we need some further assumptions about $u$ in order for these defintions to be well posed, that is $|H(u,x^{'})|,|E_{\lambda}(u,x^{'})|<\infty$.
In this context, we require
\begin{eqnarray}
u(x)\leq C|x_{N}|^{-(1+\sigma)} &\text{for $|x_{N}|>M$}
\label{decay}
\end{eqnarray}
for suitable constants $M>0$, $\sigma>0$. We will show in section $3$ that this condition is sufficient for $H(u,x^{'})$ and $E_{\lambda}(u,x^{'})$ to be well defined and finite, provided $f\in C^{2}(\R^{N})$.

\begin{theorem}
Let $u>0$ be a bounded solution to equation (\ref{eqp}) satisfying condition (\ref{decay}), with $f\in C^{2}(\R)$ satisfying (\ref{condf}). Assume furthermore that

(a) There exists $\overline{x_{N}}\in\R$ and $\delta>0$ such that $u(x^{'},\overline{x_{N}})\geq\delta>0$, for any $x^{'}\in\R^{N-1}$.

(b) $\nabla_{x^{'}}u(x^{'},x_{N})\to 0$ as $|x^{'}|\to\infty$, for any $x_{N}$.

%(c) There exists $R>0$ such that for $|x_{N}|>R$, we have 
%\begin{eqnarray}\notag
%|u_{j}(x^{'},x_{N})|\leq\gamma(x^{'})|x_{N}|^{-(2+\varepsilon)}&\text{for $1\leq j\leq N-1$}
%\end{eqnarray}
%and
%\begin{eqnarray}\notag
%u(x^{'},x_{N}),|u_{N}(x^{'},x_{N})|\leq c|x_{N}|^{-(2+\varepsilon)},
%\end{eqnarray}
%where $0<\gamma(x^{'})<\infty$ and $c>0$ is an appropriate constant.

Then $u$ is symmetric in $x_{N}$, that is $u=u(x^{'},|x_{N}-\lambda|)$, for some $\lambda\in\R$, and $u_{N}(x^{'},x_{N})>0$ if $x_{N}<\lambda$.

\label{thndim}
\end{theorem}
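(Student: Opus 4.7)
The plan is to apply the moving plane method in the $x_N$-direction, using the momentum functional $E_\lambda(u, x')$ to overcome the lack of decay of $u$ in $x'$. For $\lambda \in \R$ set $\Sigma_\lambda := \{x_N > \lambda\}$, $u_\lambda(x', x_N) := u(x', 2\lambda - x_N)$ and $w_\lambda := u_\lambda - u$; by the mean value theorem, $w_\lambda$ satisfies a linearized equation $-\Delta w_\lambda = c_\lambda(x) w_\lambda$ in $\Sigma_\lambda$ with $c_\lambda \in L^\infty$, and $w_\lambda \equiv 0$ on $\partial\Sigma_\lambda$. First, by (\ref{decay}), $u$ is uniformly small in $\Sigma_\lambda$ when $\lambda \gg 1$; combined with (\ref{condf}) and a maximum principle of Berestycki-Caffarelli-Nirenberg type this yields $w_\lambda \geq 0$ there. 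Setting $\ol := \inf\{\mu \in \R : w_\lambda \geq 0 \text{ on } \Sigma_\lambda \text{ for all } \lambda \geq \mu\}$, I would next check that $\ol > -\infty$: if $\ol = -\infty$, then $u(x', \cdot)$ would be nonincreasing on $\R$ (taking $\lambda = (s+t)/2$ in the inequality $u(x', 2\lambda - x_N) \geq u(x', x_N)$ for $x_N > \lambda$), which together with the decay as $x_N \to -\infty$ forces $u \equiv 0$, contradicting hypothesis (a).

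The heart of the proof is to show $\uol \equiv u$ in $\sol$. Suppose not; by the strong maximum principle $\uol - u > 0$ in $\sol$, and I would contradict the minimality of $\ol$ by producing $\lambda < \ol$ for which $w_\lambda \geq 0$ still. Since $u$ does not decay in $x'$, the standard compactness argument (as in the Gidas-Ni-Nirenberg proof of Theorem \ref{th1}) fails, so I argue by sequences: pick $\lambda_n \nearrow \ol$ from below and $x_n = (x'_n, x_n^N) \in \Sigma_{\lambda_n}$ with $w_{\lambda_n}(x_n) < 0$. At the negative minimum of $w_{\lambda_n}$, condition (\ref{condf}) forces $u(x_n) \geq \varepsilon$ or $u_{\lambda_n}(x_n) \geq \varepsilon$ for a universal $\varepsilon > 0$; together with the decay (\ref{decay}) this keeps $x_n^N$ in a bounded range, so necessarily $|x'_n| \to \infty$. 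Translating $\tilde u_n(x', x_N) := u(x' + x'_n, x_N)$ and using standard elliptic estimates, a subsequence converges in $C^2_{\mathrm{loc}}$ to a limit $u_\infty$; by hypothesis (b) the limit is independent of $x'$, so $u_\infty = v(x_N)$ with $v$ solving (\ref{ODE}). Condition (a) gives $v(\overline{x_N}) \geq \delta$, so $v$ is the unique positive solution of (\ref{ODE}), symmetric about some $\lambda^* \in \R$ and strictly monotone on each side of $\lambda^*$.

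The last step is to identify $\lambda^* = \ol$ and then propagate this symmetry back to $u$. The momentum is the decisive tool: by (\ref{decay}), the map $x' \mapsto E_{\ol}(u, x')$ is continuous and bounded, and passing to the limit along $x'_n$ yields $E_{\ol}(u, x'_n) \to E_{\ol}(v) = \int_{\R} (x_N - \ol)(v')^2\, dx_N$, where I used the conservation law $\tfrac{1}{2}(v')^2 + F(v) \equiv 0$ for the 1D solution $v$; this limit vanishes if and only if $\ol = \lambda^*$. On the other hand, the inequality $w_{\lambda_n}(x_n) < 0$ passes in the limit to $v(x_\infty^N) \leq v(2\ol - x_\infty^N)$, which combined with $w_{\ol} \geq 0$ and the strict monotonicity of $v$ on each side of $\lambda^*$ forces $\ol = \lambda^*$. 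A final maximum-principle argument for $w_{\ol}$ exploiting the vanishing of $E_{\ol}(u, x')$ at $x'$-infinity then gives $w_{\ol} \equiv 0$, and Hopf's lemma yields the strict monotonicity $u_N > 0$ for $x_N < \ol$. I expect the main obstacle to be precisely this last identification: tying together the asymptotic 1D profile, the momentum functional, and the moving plane threshold $\ol$ is delicate and requires using the full strength of the hypotheses (\ref{decay}), (a), and (b).
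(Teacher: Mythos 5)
Your proposal correctly identifies the main ingredients — moving plane in $x_N$, the momentum functional $E_\lambda$, the passage to a one-dimensional profile $v$ via hypotheses (a), (b), and the uniqueness/symmetry of the positive solution of \eqref{ODE} — and the first two thirds (starting the moving plane, $\ol>-\infty$, the translated limit $v$ is one-dimensional, positive, and symmetric about $\ol$) match the paper's proof of Theorem \ref{thndim} in structure. The genuine gap is in your final step.

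You write that "a final maximum-principle argument for $w_{\ol}$ exploiting the vanishing of $E_{\ol}(u,x')$ at $x'$-infinity then gives $w_{\ol}\equiv 0$," but no such argument is supplied, and I do not think one exists if you only move the plane from $x_N=+\infty$. The facts you have at that point — (i) the right-hand profile $v$ is symmetric about $\ol$, and (ii) $E_{\ol}(u,x')\to 0$ as $|x'|\to\infty$ — are entirely consistent with $u>u_{\ol}$ in $\Sigma_{\ol}$: (i) is what the moving-plane argument automatically produces, and (ii) follows from (i), so neither yields a contradiction. The momentum identity $E_{\ol}(u,x')\to 0$ constrains the \emph{asymptotic} 1D profile, but does not directly couple to a sign condition on $w_{\ol}$ in the interior. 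The paper closes the argument by running the moving plane simultaneously from both sides: it defines $\ul=\sup\{\lambda_0 : u-u_\lambda\geq 0\text{ in }\tilde\Sigma_\lambda,\ \forall\lambda\leq\lambda_0\}$, shows $\ul\leq\ol$, and then proves (Proposition \ref{propH}) that if both reflections fail to give symmetry, the right-hand profile is symmetric about $\ol$ and the left-hand profile $w$ is symmetric about $\ul$, while the same constant $\mu=E_0(v)/H(v)$ (with $v$ the unique solution of \eqref{ODE}) is the common vanishing point of the momentum. This forces $\ol=\mu=\ul$; by Remark 9, $\ol=\ul$ already means $u=u_{\ol}$, which is the desired symmetry. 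The ingredient missing from your proposal is precisely the reflection from $x_N=-\infty$ and the momentum comparison between $\ol$ and $\ul$; that, rather than a one-sided maximum-principle refinement, is how one shows $w_{\ol}\equiv 0$.

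A smaller point: the step where you argue that $x_n^N$ stays bounded by using a "negative minimum of $w_{\lambda_n}$" needs care, since $\Sigma_{\lambda_n}$ is unbounded and an infimum need not be attained; the paper handles this with a somewhat more technical diagonal argument in Proposition \ref{profile}. This is a fixable detail, unlike the gap above.
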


\noindent\textbf{Remark 3.} In Theorem \ref{thndim}, we can assume that there exists a positive solution to Problem (\ref{ODE}), otherwise, by Theorem \ref{thprof1d=0}, there are no solutions $u$ fulfilling hypothesis of Theorem \ref{thndim}. 
\\

Section $3$ will be devoted to the proof of this theorem, that holds true in any dimension $N\geq 2$. In Theorem \ref{thndim}, we would like to be able to drop assumption (a). Up to now, we have been able to do so only in dimension $N=2$.

\begin{theorem}
Let $N=2$. Let $u>0$ be a bounded solution to equation (\ref{eqp}) satisfying condition (\ref{decay}), with $f\in C^{2}(\R)$ satisfying (\ref{condf}). Assume furthermore that $u_{1}(x_{1},x_{2})\to 0$ as $|x_{1}|\to\infty$, for any $x_{2}$.

%$(ii)$ There exists $R>0$ such that for $|x_{2}|>R$,  
%\begin{eqnarray}\notag
%|u_{1}(x_{1},x_{2})|\leq\gamma(x_{1})|x_{2}|^{-(2+\varepsilon)}
%\end{eqnarray}
%and
%\begin{eqnarray}\notag
%u(x_{1},x_{2})\leq C|x_{2}|^{-(1+\varepsilon)},
%\end{eqnarray}
%where $0<\gamma(x_{1})<\infty$ and $c>0$ is an appropriate constant.

Then $u$ is symmetric in $x_{2}$, that is $u=u(x_{1},|x_{2}-\lambda|)$, for some $\lambda\in\R$.
\label{th2dim}
\end{theorem}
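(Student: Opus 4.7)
\textbf{Proof plan for Theorem \ref{th2dim}.}

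If Problem (\ref{ODE}) admits no positive solution, Theorem \ref{thprof1d=0} (with $N=2$ and $x' = x_1$) applies directly and yields radial symmetry, in particular symmetry in $x_2$. We may therefore assume that a positive solution $V$ to (\ref{ODE}) exists; by the uniqueness stated in the introduction, every positive solution is a translate of $V$, and $V>0$ everywhere. The plan is to combine conservation of the Hamiltonian $H(u,x_1)$ with the asymptotic condition $u_1\to 0$ in order to either produce a uniform positive lower bound for $u$ on some slice $\{x_2=\bar x_2\}$ (and then invoke Theorem \ref{thndim}) or to recover full decay of $u$ at infinity (and then invoke Theorem \ref{th1}).

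A direct integration by parts in $x_2$, using the decay (\ref{decay}) and the equation, gives $\partial_{x_1}H(u,x_1)=-\int_{\R}u_1(u_{11}+u_{22}+f(u))\,dx_2=0$, so $H(u,x_1)\equiv H_0$ is constant. To study the behaviour as $|x_1|\to\infty$, pick a sequence $x_1^{(k)}\to\pm\infty$ and set $u_k(x_1,x_2):=u(x_1+x_1^{(k)},x_2)$. Elliptic regularity provides a $C^2_{\mathrm{loc}}$-subsequential limit $u^{\infty}$ solving $-\Delta u^{\infty}=f(u^{\infty})$, and the hypothesis $u_1\to 0$ forces $u^{\infty}_1\equiv 0$, so $u^{\infty}(x_1,x_2)=v(x_2)$ with $-v''=f(v)$ on $\R$ and $v\to 0$ at infinity by (\ref{decay}). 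Hence $v$ solves (\ref{ODE}), so either $v\equiv 0$ or $v=V(\,\cdot-a)$ for some $a\in\R$. Passing to the limit inside $H$ (justified by uniform-in-$x_1$ decay of $u$ and $\nabla u$ in $x_2$, obtained by bootstrapping (\ref{decay}) with elliptic gradient estimates as in the spirit of Remark 2), one gets $H_0=\int_{\R}(\tfrac{1}{2}(v')^2-F(v))\,dx_2$. Since the ODE Hamiltonian $\tfrac{1}{2}(V')^2+F(V)$ is constant in $x_2$ and vanishes at infinity, this integral equals $0$ when $v\equiv 0$ and equals $c_V:=\int_{\R}(V')^2\,dx_2>0$ when $v$ is a translate of $V$. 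Thus $H_0\in\{0,c_V\}$, yielding a sharp dichotomy.

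If $H_0=0$, no subsequential limit can be a translate of $V$, so every subsequential limit is identically $0$; therefore $u_k\to 0$ in $C^2_{\mathrm{loc}}$ without passing to a subsequence, which gives $u(x_1,x_2)\to 0$ as $|x_1|\to\infty$ pointwise in $x_2$. Combined with the uniform-in-$x_1$ decay in $x_2$ from (\ref{decay}) and equicontinuity in $x_2$ coming from elliptic bounds on $u_2$, this yields $u(x)\to 0$ as $|x|\to\infty$, and Theorem \ref{th1} furnishes radial symmetry and hence the conclusion. If instead $H_0=c_V>0$, no subsequential limit can be $\equiv 0$. Fix any $\bar x_2\in\R$: if $\liminf_{|x_1|\to\infty}u(x_1,\bar x_2)=0$, extract $x_1^{(k)}$ with $u(x_1^{(k)},\bar x_2)\to 0$; the corresponding $C^2_{\mathrm{loc}}$-subsequential limit $V(\,\cdot-a^*)$ must satisfy $V(\bar x_2-a^*)=0$, contradicting $V>0$. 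Hence $\liminf_{|x_1|\to\infty}u(x_1,\bar x_2)>0$, and together with continuity and strict positivity of $u(\,\cdot,\bar x_2)$ on any compact interval one obtains $\inf_{x_1\in\R}u(x_1,\bar x_2)\geq\delta>0$. This is precisely hypothesis (a) of Theorem \ref{thndim}, which then delivers the symmetry of $u$ in $x_2$.

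The main obstacle is justifying the limit passage inside $H$: one needs uniform-in-$x_1$ decay of both $u$ and $\nabla u$ in $x_2$, which is obtained by combining (\ref{decay}) with elliptic gradient estimates on unit balls, but the argument is different according to whether $f'(0)<0$ (exponential decay, Remark 2) or $f'(0)=0$ (polynomial decay, (\ref{decay})). A subtler conceptual point is that the dichotomy rests only on the \emph{value} of the conserved quantity $H_0$ and on the fact that every translate of $V$ is strictly positive; no quantitative bound on the translation parameter $a^*$ is needed, which is crucial since there is no obvious way to control $a^*$ uniformly along different subsequences.
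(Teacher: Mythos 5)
Your argument is correct and is built around the same dichotomy on the conserved quantity $H(u,x_1)$ as the paper's proof. The case $H_0=0$ is handled identically (this is exactly Proposition \ref{propH0}: all profiles vanish, so $u(x)\to 0$ as $|x|\to\infty$ and Gidas--Ni--Nirenberg applies). The case $H_0\neq 0$ is where you diverge: the paper applies Proposition \ref{propH} directly, with constancy of $H$ giving hypothesis $(i)$ and $\mu=E_0(u)/H(u)$ giving hypothesis $(ii)$, whereas you observe that $H_0>0$ rules out vanishing profiles, deduce $\inf_{x_1}u(x_1,\bar x_2)\geq\delta>0$ on every slice (hypothesis $(a)$ of Theorem \ref{thndim}), and then invoke Theorem \ref{thndim}. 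Since Theorem \ref{thndim} is itself proved via Proposition \ref{propH}, the two routes ultimately run through the same machinery; your reduction is a little more indirect, but has the merit of making explicit that the conserved value $H_0$ can only be $0$ or $\int_\R (V')^2\,dx_2$, and of avoiding any explicit manipulation of the momentum $E_\mu$ at the level of Theorem \ref{th2dim}. Two minor remarks: you re-derive the $x_1$-independence of $H$ by a direct integration by parts instead of citing \cite{GMX} (this is fine, the boundary term $u_1u_2$ vanishing at $|x_2|\to\infty$ thanks to (\ref{decay}) together with Lemma \ref{lemmaestgrad}); and your preliminary case split on whether (\ref{ODE}) admits a positive solution is redundant, since in that case every profile vanishes, forcing $H_0=0$, which your first alternative already covers.
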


\noindent\textbf{Remark 4.} If $f^{'}(0)<0$, thanks to (\ref{expdecay}), Theorems \ref{thndim} and \ref{th2dim} hold true even if we replace condition (\ref{decay}) with the weaker assumption (\ref{decxn}). 
\\

\noindent\textbf{Remark 5.} In dimension $N=2$, Theorem \ref{th2dim} is a extension to Theorem $1.1$ of \cite{BF} to more general nonlinearities, since we do not need to take $f(u)=u+g(u)$, with $g$ satisfying their assumptions $(f1)$, $(f2)$ and $(f3)$. On the other hand, we need some more regularity, we take $f\in C^{2}$ instead of $C^{1,\beta}$. 
\\

Unfortunately, if $f$ is flat near the origin, condition (\ref{decxn}) does not necessarily imply (\ref{decay}), at least in dimension $N\geq 3$. In fact, the solution constructed by L. Dupaigne and A. Farina in \cite{DF} in dimension $N=3$ decays as $|x|^{-1}$ (see the above discussion for the explicit expression). This function, seen as a solution in higher dimension, is a counter-example in dimension $N\geq 4$ too.

In Section $5$, we consider solutions to (\ref{eqp}) decaying in $N-1$ variables, and we prove the following theorem.

\begin{theorem}
Let $N\geq 5$. Let $u>0$ be a bounded solution to equation (\ref{eqp}), 
with $f\in C^{2}(\R)$ satisfying (\ref{condf}). Assume that
\begin{eqnarray}
u(x^{'},x_{N})\to 0 &\text{as $|x^{'}|\to\infty$, uniformly in $x_{N}$}
\label{limx'}
\end{eqnarray}
and 
\begin{eqnarray}
&\text{for some $x^{'}_{0}$, }u(x^{'}_{0},x_{N})\to 0 &\text{as $x_{N}\to\infty$ }.
\label{decn}
\end{eqnarray}
Then $u$ is radially symmetric. 
\label{thdec}
\end{theorem}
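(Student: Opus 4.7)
My plan is to reduce to the classical Gidas--Ni--Nirenberg theorem (Theorem~\ref{th1}) by first producing radial symmetry in the $x'$ directions and then symmetry in $x_{N}$. Using hypothesis \eqref{limx'}, I would apply the moving plane method in every direction of $\R^{N-1}$: the uniform decay as $|x'|\to\infty$ starts the comparison argument at infinity, and the monotonicity of $f$ on $(0,\varepsilon)$ from \eqref{condf} drives the maximum-principle step, much as in the proof of Theorem~\ref{th2}. Combining the resulting symmetries yields $y'_{0}\in\R^{N-1}$ such that
\[
u(x',x_{N})=U\bigl(|x'-y'_{0}|,x_{N}\bigr),
\]
with $U$ strictly decreasing in its first argument.

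Next, I would upgrade the one-point decay \eqref{decn} to uniform decay as $x_{N}\to+\infty$. Since $f\in C^{2}$ with $f(0)=0$, the function $V(x):=f(u(x))/u(x)$ is bounded, so $u$ is a positive solution of a linear elliptic equation with bounded coefficients. Harnack's inequality on the ball $B_{r_{0}+1}\bigl((x'_{0},x_{N})\bigr)$, where $r_{0}=|x'_{0}-y'_{0}|$ (which contains $(y'_{0},x_{N})$), gives
\[
u(y'_{0},x_{N})\le C\,u(x'_{0},x_{N})\xrightarrow[x_{N}\to+\infty]{}0,
\]
with $C$ independent of $x_{N}$. The radial monotonicity in $x'$ from the first step then implies $\sup_{x'\in\R^{N-1}}u(x',x_{N})=u(y'_{0},x_{N})\to 0$, i.e.\ uniform decay of $u$ as $x_{N}\to+\infty$.

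With this uniform decay, I would run the moving plane method in the $x_{N}$ direction, sliding $\{x_{N}=\lambda\}$ down from $+\infty$. For $\lambda$ large enough that $u<\varepsilon$ on $\{x_{N}>\lambda\}$, the standard comparison argument (using monotonicity of $f$ on $(0,\varepsilon)$ together with decay in $|x'|$ and at $x_{N}\to+\infty$) yields $u(x',x_{N})\le u(x',2\lambda-x_{N})$ on $\{x_{N}>\lambda\}$. Let $\lambda^{*}$ be the infimum of admissible $\lambda$'s. If $\lambda^{*}>-\infty$, standard strong maximum principle arguments give symmetry of $u$ about $\{x_{N}=\lambda^{*}\}$. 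If $\lambda^{*}=-\infty$, then $u(x',\cdot)$ is non-increasing, so $L(x'):=\lim_{x_{N}\to-\infty}u(x',x_{N})$ exists, $u\le L$ on $\R^{N}$ (whence $L\not\equiv 0$, otherwise $u\le 0$), and $L$ is a bounded radial solution of $-\Delta_{x'}L=f(L)$ on $\R^{N-1}$ decaying at infinity. This monotone case is ruled out by combining the conservation of the Hamiltonian
\[
H(x_{N})=\int_{\R^{N-1}}\Bigl[\tfrac{1}{2}u_{N}^{2}-\tfrac{1}{2}|\nabla_{x'}u|^{2}-F(u)\Bigr]dx',
\]
whose limits at $x_{N}=+\infty$ and $x_{N}=-\infty$ equal $0$ and $-\int_{\R^{N-1}}\bigl[\tfrac{1}{2}|\nabla L|^{2}+F(L)\bigr]dx'$ respectively, with the Pohozaev identity on $\R^{N-1}$ applied to $L$; these together force $L\equiv 0$, a contradiction.

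Once symmetry of $u$ about $\{x_{N}=\lambda^{*}\}$ is established, the uniform decay at $+\infty$ transfers to $-\infty$, and together with \eqref{limx'} one obtains $u(x)\to 0$ as $|x|\to\infty$. Theorem~\ref{th1} then yields full radial symmetry of $u$ about $(y'_{0},\lambda^{*})$. The main obstacle is the monotone case in the previous paragraph: one must justify that $H(x_{N})$ is finite and independent of $x_{N}$, that the limit $x_{N}\to-\infty$ can be passed under the integral sign, and that the Pohozaev identity is available for $L$ on $\R^{N-1}$. The hypothesis $N\ge 5$ enters precisely at this point, ensuring---via the radial-decay estimates for the limit $L$ at infinity in $\R^{N-1}$ (effective dimension $N-1\ge 4$)---that all the integrals appearing in the Hamiltonian--Pohozaev argument are absolutely convergent.
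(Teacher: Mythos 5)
Your overall architecture (establish uniform decay at $x_{N}\to+\infty$, run moving planes in $x_{N}$, rule out the monotone case via a conserved Hamiltonian together with a Pohozaev identity for the limit profile, then invoke Gidas--Ni--Nirenberg) matches the paper's proof. However, your very first step contains a genuine gap, and it infects the two places where you use it.

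You claim that moving planes in each $x'$-direction, ``much as in the proof of Theorem~\ref{th2}'', already produces radial symmetry in $x'$. But Theorem~\ref{th2} crucially uses periodicity in the non-decaying variable to obtain compactness for the sequence of contradiction points (Proposition~\ref{profile1}$(ii)$). Here the non-decaying variable is $x_{N}$, and at this stage you know nothing about $u$ as $x_{N}\to\pm\infty$ except \eqref{decn} at a single $x'_{0}$. Running the planes in, say, $x_{1}$ from both sides gives $\underline{\lambda}_{1}\le\overline{\lambda}_{1}$, but closing the gap $\underline{\lambda}_{1}<\overline{\lambda}_{1}$ requires either compactness in $x_{N}$ or the energy machinery of Proposition~\ref{propH}, and the latter would need $|H(u,\cdot)|\ge\gamma>0$ at infinity in the transverse variables --- which fails here, since translating in $x_{2},\dots,x_{N-1}$ gives zero profiles by \eqref{limx'}. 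So radial symmetry in $x'$ cannot be extracted first; in the paper it only drops out at the very end from Theorem~\ref{th1}. Your Harnack upgrade in step 2 and the claim ``$L$ is radial'' in step 3 both rely on the $y'_{0}$ produced by step 1. Fortunately both can be repaired without it: the Harnack estimate $u(\cdot,x_{N})\le C(R)\,u(x'_{0},x_{N})$ on $\{|x'|\le R\}$ follows from a Harnack chain argument for $-\Delta u=V u$, $V=f(u)/u\in L^{\infty}$, with constant independent of $x_{N}$, and together with \eqref{limx'} this already gives uniform decay at $x_{N}\to+\infty$ (this is a genuinely different, and arguably more elementary, route than the paper's compactness-plus-Lemma~\ref{lemmasolrad} argument). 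Likewise, the limit $L$ on $\R^{N-1}$ is radially symmetric by Gidas--Ni--Nirenberg applied directly to it, without any prior symmetry of $u$; this is exactly how the paper reaches Lemma~\ref{lemmadec}. In short, step 1 should simply be deleted.

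A smaller but real error: your $H(x_{N})=\int\bigl[\tfrac12 u_{N}^{2}-\tfrac12|\nabla_{x'}u|^{2}-F(u)\bigr]dx'$ is \emph{not} conserved; the conserved quantity integrates $\tfrac12(u_{N}^{2}-|\nabla_{x'}u|^{2})+F(u)$ over $x'$ (you have copied the sign convention from the ``$x'$-moving'' Hamiltonian $H(u,x')$ of Section~3, where the integration is in $x_{N}$). With the sign corrected, the limit at $x_{N}\to-\infty$ is $-J(L)=-\int_{\R^{N-1}}\tfrac12|\nabla L|^{2}-F(L)\,dx'$, conservation plus the vanishing at $+\infty$ forces $J(L)=0$, and Lemma~\ref{lemmadec} (the Pohozaev identity, $J(L)=\tfrac1{N-1}\int|\nabla L|^{2}$, valid on $\R^{N-1}$ when $N-1\ge4$) gives $L\equiv0$, a contradiction --- which is exactly the paper's argument and correctly explains why $N\ge5$ is needed. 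Finally, in the case $\lambda^{*}>-\infty$ you invoke ``standard strong maximum principle arguments''; strictly speaking one also needs the Hopf-lemma/Proposition~\ref{profile1} step to exclude $u>u_{\lambda^{*}}$ on $\Sigma_{\lambda^{*}}$, and this is precisely where the uniform decay at $x_{N}\to+\infty$ (plus \eqref{limx'}) provides the needed compactness.
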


\noindent\textbf{Remark 6.} We observe that, if we assume $f\in C^{1}$ with $f^{'}(0)<0$, then, thanks to the exponential decay (apply (\ref{expdecay}) $N-1$ times), Theorem \ref{thdec} holds true in any dimension $N\geq 2$. 
\\

\noindent\textbf{Remark 7.} In dimension $2\leq N\leq 4$, Theorem \ref{thdec} holds true under the assumption 
\begin{eqnarray}
u(x),|\nabla u(x)|\leq C|x^{'}|^{-\frac{N-1+\sigma}{2}} &\text{for $|x^{'}|\geq M$}
\label{decN}
\end{eqnarray}
for suitable constants $M>0$, $\sigma>0$ and $f\in C^{1}$.
\\ 

In order to deal with the case $f^{'}(0)=0$, we study the decay rate at infinity of functions fulfilling (\ref{limx'}). This will be carried out in section $6$.

%that are subharmonic outside a ball, and we obtain that, in dimension $N\geq 3$, any solution vanishing at infinity actually satisfies  
%\begin{eqnarray}\notag
%u(x)\leq C|x|^{2-N} &\text{for $|x|\geq M$}
%\end{eqnarray}
%for some $M>0$. In dimension $N\geq 4$, this kind of functions also satisfy
%begin{eqnarray}\notag
%|\nabla u(x)|\leq C|x|^{1-N} &\text{for $|x|\geq M$}
%\end{eqnarray}
%Moreover, the solutions constructed by L. Dupaigne and A. Farina in \cite{DF}, with $f(u)=((u-\beta)^{+})^{p}$, show that this decay rate is sharp if $f$ is flat near the origin.

\

\noindent {\bf Acknowledgements} A.F. is partially supported by the ERC grant EPSILON (Elliptic Pde's and
Symmetry of Interfaces and Layers for Odd Nonlinearities) and by the
ERC grant COMPAT (Complex Patterns for Strongly Interacting Dynamical
Systems).  A.M. and M.R. have been supported by the PRIN project {\em Variational and perturbative aspects of nonlinear differential problems}.

\section{Starting the moving plane procedure}

First we define, for $\lambda\in\mathbb{R}$, $u_{\lambda}(x)=u(x^{'},2\lambda-x_{N})$, $\Sigma_{\lambda}=\{x_{N}<\lambda\}$. %and $\tilde{\Sigma}_{\lambda}=\{x_{N}>\lambda\}$. 
In the following proposition, we prove that the moving plane procedure can be started. In order to do so, it is enough to replace condition (\ref{decxn}) with the weaker assumption
\begin{eqnarray}
u(x^{'},x_{N})\leq\varepsilon &\text{in the subspace $\{x_{N}>\lambda_{0}\}$}
\label{smallxN}
\end{eqnarray} 
for a suitable $\lambda_{0}\in\R$, if $f$ is nonincreasing in the interval $(0,\varepsilon)$. 
\begin{proposition}
Let $u>0$ be a bounded solution to equation (\ref{eqp}) fulfilling (\ref{smallxN}). Assume that $f$ satisfies (\ref{condf}). Then $u-u_{\lambda}\geq 0$ in $\Sigma_{\lambda}$, for any $\lambda\geq\lambda_{0}$.
\label{propstart}
\end{proposition}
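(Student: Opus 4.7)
The plan is to argue by contradiction: assume $m := \inf_{\Sigma_\lambda}(u - u_\lambda) < 0$ and, via translation compactness, realize this infimum as an interior minimum of a limit function where the strong maximum principle applies thanks to the sign of $f'$ near the origin.

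The key structural observation is that for $\lambda \geq \lambda_0$ and any $x \in \Sigma_\lambda$, the reflected $N$-th coordinate satisfies $2\lambda - x_N > \lambda \geq \lambda_0$, so hypothesis (\ref{smallxN}) yields $u_\lambda(x) \leq \varepsilon$ throughout $\Sigma_\lambda$. Hence on the (hypothetical) sublevel set $\{w_\lambda < 0\} \cap \Sigma_\lambda$, with $w_\lambda := u - u_\lambda$, one has $0 < u < u_\lambda \leq \varepsilon$, so both values lie in the interval $(0,\varepsilon]$ on which $f$ is nonincreasing. Writing
\[
-\Delta w_\lambda = f(u) - f(u_\lambda) = c(x)\, w_\lambda, \quad c(x) := \int_0^1 f'\bigl(u_\lambda + t(u-u_\lambda)\bigr)\, dt,
\]
gives $c(x) \leq 0$ in $\{w_\lambda < 0\}$, so the linear equation $-\Delta w_\lambda - c(x) w_\lambda = 0$ has a sign favorable to the maximum principle there. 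Schauder estimates show that $u$, and hence $w_\lambda$, is globally Lipschitz on $\R^N$; combined with $w_\lambda = 0$ on $\partial \Sigma_\lambda$, any minimizing sequence $x_n$ for $w_\lambda$ satisfies $\lambda - x_n^N \geq \delta > 0$ uniformly.

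Translating in $x'$ (and, if $x_n^N \to -\infty$, also in $x_N$) and extracting a $C^2_{\mathrm{loc}}$-limit $\tilde u$ of the shifted solutions, I obtain a bounded positive entire solution of the equation that inherits the appropriate smallness property from (\ref{smallxN}). The corresponding limit $\tilde w$ of the reflected differences is bounded, satisfies an analogous semilinear equation, remains $\geq m$ everywhere, and attains the value $m$ at an interior point $x_\ast$ of its domain. Near $x_\ast$ the pair of values involved in the comparison still lies in $(0,\varepsilon]$, so the strong maximum principle applied to $-\Delta \tilde w + (-c(x)) \tilde w = 0$ (with $-c \geq 0$) forces $\tilde w \equiv m$ on the connected component $\mathcal{C}$ of $\{\tilde w < 0\}$ through $x_\ast$. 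Continuity at the topological boundary of $\mathcal{C}$ (or at $\{x_N = \lambda\}$, if $\mathcal{C}$ reaches the boundary plane of the domain) yields $\tilde w = 0$ there, incompatible with $\tilde w \equiv m < 0$ on $\mathcal{C}$.

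The main technical obstacle is the case $x_n^N \to -\infty$, in which the reflection plane drifts to infinity: the natural limit then compares $\tilde u$ with the limit $\bar v$ of the reflected translates of $u$, rather than with a literal reflection of $\tilde u$. Hypothesis (\ref{smallxN}) is essential here to guarantee that $\bar v$ is a bounded non-negative entire solution with $0 \leq \bar v \leq \varepsilon$, together with $\bar v > 0$ (by the strong maximum principle applied to $\bar v$ itself, since otherwise $\tilde w(0) \geq \tilde u(0) > 0$ contradicts $\tilde w(0) = m$). This keeps both arguments of $f$ in the region where $f' \leq 0$, so the strong-maximum-principle step for $\tilde w$ still applies and produces the same contradiction.
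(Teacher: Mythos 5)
Your proof takes a genuinely different route from the paper's, and it has a gap precisely where you flag the ``main technical obstacle.'' The paper's own argument is very short: on each nonempty connected component $\omega$ of $\{u<u_\lambda\}\cap\Sigma_\lambda$ one has $0<u<u_\lambda\leq\varepsilon$ (the reflected point lying in $\{x_N>\lambda_0\}$), so $-\Delta(u-u_\lambda)=f(u)-f(u_\lambda)\geq 0$ with $u-u_\lambda=0$ on $\partial\omega$, and the Berestycki--Caffarelli--Nirenberg maximum principle for unbounded domains disjoint from a cone (\cite{BCN}, Lemma $2.1$) gives $u-u_\lambda\geq 0$ in $\omega$, a contradiction. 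That lemma already packages the Phragm\'en--Lindel\"of input that an unbounded $\omega$ requires; no compactness of a minimizing sequence is needed.

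Your translation-compactness variant does work when the minimizing sequence has $x_n^N$ bounded: then $\tilde w$ lives on the fixed half-space $\Sigma_\lambda$, the component $\mathcal C$ of $\{\tilde w<0\}$ containing $x_\ast$ necessarily has nonempty topological boundary (either inside $\Sigma_\lambda$ or on $\{x_N=\lambda\}$), and continuity forces $\tilde w=0$ there, contradicting $\tilde w\equiv m<0$. But in the case $x_n^N\to-\infty$ the ambient domain of $\tilde w$ becomes all of $\R^N$, and nothing forces $\mathcal C$ to be a proper subset. If $\mathcal C=\R^N$ the strong maximum principle yields only $\tilde w\equiv m$ globally, i.e.\ $\tilde u=\bar v+m$, and there is no boundary set on which to read off $\tilde w=0$. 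Establishing $\bar v>0$ and $0\leq\tilde u<\bar v\leq\varepsilon$, as you do, fixes the sign of $c$ but does not exclude this scenario: $f(\tilde u)=f(\bar v)$ is perfectly compatible with $\tilde u=\bar v+m\neq\bar v$ when $f$ is only nonincreasing (for instance locally constant on $[0,\varepsilon]$), and $\tilde u,\bar v$ are then bounded nonnegative subharmonic functions on $\R^N$ (since $f\leq 0$ on $[0,\varepsilon]$), which for $N\geq 3$ need not be constant. So the ``same contradiction'' is not actually produced in the drifting case; closing it would require an extra Phragm\'en--Lindel\"of or barrier step of precisely the type the paper delegates to \cite{BCN}.
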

\noindent\textbf{Remark 8.} In particular, this proposition holds true if we assume that 
\begin{eqnarray}\notag
u(x^{'},x_{N})\to 0 &\text{as $x_{N}\to\infty$ uniformly in $x^{'}$.}
\end{eqnarray}

\begin{proof}
We assume by contradiction that it is possible to find $\lambda\geq\lambda_{0}$ such that the open set $\Omega_{\lambda}=\{u-u_{\lambda}<0\}\cap\Sigma_{\lambda}$ is not empty. By the monotonicity of $f$ near the origin, we get that, for any nonempty connected component $\omega$ of $\Omega_{\lambda}$,
\begin{eqnarray}\notag
\begin{cases}
-\Delta(u-u_{\lambda})=f(u)-f(u_{\lambda})\geq 0 & \text{in $\omega$}\\
u-u_{\lambda}=0 & \text{on $\partial\omega$}.
\end{cases} 
\end{eqnarray}
%In fact, by construction, we get that $u-u_{\lambda}=0$ on $\partial\Sigma_{\lambda}$. In $\partial\omega_{\lambda}\cap\Sigma_{\lambda}$
Hence, by the maximum principle for possibly unbounded domains (see \cite{BCN}, Lemma $2,1$), we conclude that $u-u_{\lambda}\geq 0$ in $\omega$, a contradiction.

\end{proof}

%We observe that this proposition is true even if we only assume that
%\begin{eqnarray}\notag
%u(x^{'},x_{N})\to 0 &\text{as $x_{N}\to+\infty$, uniformly in $|x^{'}|$,}
%\end{eqnarray}
%the behaviour of $u$ for $x_{N}\to-\infty$ does not affect the result. 

In view of this proposition, we can define

\begin{eqnarray}
\ol=\inf\{\lambda_{0}: u-u_{\lambda}\geq 0\text{ in $\Sigma_{\lambda}$,} \forall\lambda\geq\lambda_{0}\}.
\label{defol}
\end{eqnarray}
By construction, we see that $\ol<\infty$.

\begin{lemma}
Let $u\geq 0$ be a bounded solution to equation (\ref{eqp}) fulfilling (\ref{smallxN}). Assume that $f$ satisfies (\ref{condf}). 

$(i)$ If $\ol=-\infty$, then $u_{N}\equiv 0$ or $u_{N}(x)<0$ for any $x\in\R^{N}$.

$(ii)$ If $u$ satisfies condition (\ref{decxn}) and $\ol=-\infty$, then $u\equiv 0$.

$(iii)$ If $u$ satisfies condition (\ref{decxn}) and $f^{'}(t)\leq 0$ for any $t>0$, then $u\equiv 0$.
\label{lemmaol}
\end{lemma}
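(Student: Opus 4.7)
For part (i), the plan is to exploit that $\ol=-\infty$ forces $u\geq u_\lambda$ on $\Sigma_\lambda$ for every $\lambda\in\R$, which in turn yields monotonicity of $u$ in the $x_N$ direction. Indeed, given any $x'\in\R^{N-1}$ and $a<b$, choosing $\lambda=(a+b)/2$ puts $(x',a)\in\Sigma_\lambda$ and makes $u_\lambda(x',a)=u(x',b)$, so the reflected inequality becomes $u(x',a)\geq u(x',b)$; hence $u_N\leq 0$ throughout $\R^N$. Differentiating the equation $-\Delta u=f(u)$ in $x_N$ and setting $v:=-u_N\geq 0$ yields $-\Delta v-f'(u)v=0$, and adding $Kv$ with $K=\|f'(u)\|_\infty$ (finite because $u$ is bounded and $f\in C^1$) produces $-\Delta v+(K-f'(u))v=Kv\geq 0$ with a non-negative, bounded zero-order coefficient. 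The strong maximum principle applied on the connected set $\R^N$ then delivers the dichotomy $v\equiv 0$ or $v>0$ everywhere, which is the statement of (i).

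For part (ii), the plan is to combine the dichotomy of (i) with the decay assumption (\ref{decxn}). If $u_N\equiv 0$, then $u$ depends only on $x'$, and $u(x',x_N)\to 0$ as $|x_N|\to\infty$ forces $u\equiv 0$. If instead $u_N<0$ everywhere, then $x_N\mapsto u(x',x_N)$ is strictly decreasing for each fixed $x'$, while (\ref{decxn}) makes it tend to $0$ as $x_N\to-\infty$; strict monotonicity then gives $u(x',x_N)<0$ for every $x_N$, contradicting $u>0$. Only the first alternative survives, so $u\equiv 0$.

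For part (iii), the plan is to prove $\ol=-\infty$ and then invoke (ii). The key observation is that the proof of Proposition \ref{propstart} extends to arbitrary $\lambda\in\R$ as soon as $f'\leq 0$ holds on all of $(0,\infty)$: on any non-empty connected component $\omega$ of $\Omega_\lambda=\{u<u_\lambda\}\cap\Sigma_\lambda$, the positivity of $u$ and $u_\lambda$ combined with the global monotonicity of $f$ still produces $f(u)\geq f(u_\lambda)$ in $\omega$. Hence $-\Delta(u-u_\lambda)\geq 0$ in $\omega$ with $u-u_\lambda=0$ on $\partial\omega$, and the maximum principle for unbounded domains (\cite{BCN}, Lemma 2.1) forces $u\geq u_\lambda$ in $\omega$, contradicting the definition of $\omega$. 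Thus $\Omega_\lambda=\emptyset$ for every $\lambda$ and $\ol=-\infty$, so (ii) closes the argument.

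I expect the only delicate point across all three parts to be the invocation of the strong maximum principle in (i): one has to rewrite the linearized equation so that the zero-order term is non-negative before quoting the principle on all of $\R^N$, and this is precisely what the additive constant $K$ achieves. The remainder is essentially bookkeeping on top of the ingredients already established in Proposition \ref{propstart}.
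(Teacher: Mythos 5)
Your proof is correct and follows essentially the same route as the paper: deriving $u_N\le 0$ from $\ol=-\infty$, applying the strong maximum principle to the linearized equation to get the dichotomy in (i), then combining monotonicity with the two-sided decay in $x_N$ for (ii), and extending Proposition \ref{propstart} to all $\lambda$ when $f'\le 0$ globally for (iii). Your write-up simply makes explicit the normalization of the zero-order coefficient (adding $K$) and spells out the details the paper leaves terse; one tiny slip is that in (ii) you say the second alternative "contradicts $u>0$" whereas the lemma only assumes $u\ge 0$, but the contradiction holds just the same.
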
 
\begin{proof}
$(i)$ If $\ol=-\infty$, that is the moving plane method does not stop, then $u_{N}\leq 0$. Since $u_{N}$ verifies the linearized equation $-\Delta u_{N}=f^{'}(u)u_{N}$, by the strong maximum principle, we get that $u_{N}\equiv 0$ or $u_{N}<0$ in the whole $\R^{N}$.

$(ii)$ If $\ol=-\infty$, the monotonicity, together with condition (\ref{decxn}), yields that $u\equiv 0$.

$(iii)$ If $f^{'}(t)\leq 0$ for any $t>0$, then $\ol=-\infty$, hence, by statement $(ii)$, $u\equiv 0$.

\end{proof}

%If $\underline{\lambda}=\overline{\lambda}$, we are done; 
%Since $u-u_{\overline{\lambda}}\geq 0$ in $\Sigma_{\overline{\lambda}}$, there are only two possibilities, that is $u=u_{\overline{\lambda}}$ or $u>u_{\overline{\lambda}}$. In the first case the proof is finished, in the second one we have the following proposition.
%can construct a profile which is symmetric about the hyperplane $\{x_{N}=\overline{\lambda}\}$.

\begin{proposition}
Let $u>0$ be a bounded solution to equation (\ref{eqp}) fulfilling (\ref{smallxN}). Assume that $f$ satisfies (\ref{condf}). Assume, in addition, that $\ol>-\infty$.

$(i)$ For any positive integer $k$, there exists $\ol-1/k\leq\lambda_{k}<\ol$ and a point $x^{k}\in\slk$, with $\{x^{k}_{N}\}$ bounded, such that
\begin{eqnarray}
 u(x^{k})<u_{\lambda_{k}}(x^{k})
\label{relol}
\end{eqnarray}

$(ii)$  If, in addition, $u$ is periodic in $x_{N}$, then the sequence $x^{k}$ can be chosen to be bounded.
\label{profile}
\end{proposition}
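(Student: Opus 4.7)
For Part (i), the existence of $\lambda_k \in [\ol - 1/k, \ol)$ and of a witness $x^k \in \slk$ with $u(x^k) < u_{\lambda_k}(x^k)$ is immediate from the definition of $\ol$ in \eqref{defol} as an infimum: the set $W^k := \{y \in \slk : u(y) < u_{\lambda_k}(y)\}$ is non-empty for every $\lambda_k < \ol$, and any element of $W^k$ already satisfies the upper bound $x^k_N < \lambda_k < \ol$. The substantive content of the statement is therefore the uniform-in-$k$ lower bound on $x^k_N$.

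I would prove that lower bound by contradiction. Pick $R > 0$ large enough that $y_N \le -R$ forces $2\lambda - y_N > \lambda_0$ for every $\lambda \in [\ol - 1, \ol]$, and suppose that, along some subsequence, $W^k \subset \{y_N < -R\}$. For any $y \in W^k$ we then have $u_{\lambda_k}(y) = u(y', 2\lambda_k - y_N) \le \varepsilon$ by \eqref{smallxN}, so $0 \le u(y) < u_{\lambda_k}(y) \le \varepsilon$. On each connected component $\omega$ of $W^k$, the monotonicity of $f$ on $(0,\varepsilon)$ gives, exactly as in the proof of Proposition~\ref{propstart},
\begin{equation*}
-\Delta(u - u_{\lambda_k}) = f(u) - f(u_{\lambda_k}) \ge 0 \text{ in } \omega,
\qquad u - u_{\lambda_k} = 0 \text{ on } \partial\omega.
\end{equation*}
The maximum principle for unbounded domains (\cite{BCN}, Lemma 2.1) then forces $u - u_{\lambda_k} \ge 0$ in $\omega$, contradicting $\omega \subset W^k$. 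Hence $W^k \cap \{y_N \ge -R\} \ne \emptyset$ for every $k$, and selecting $x^k$ in this intersection gives $-R \le x^k_N < \ol$.

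For Part (ii), I read ``periodic in $x_N$'' as a typographical slip for ``periodic in $x'$'', consistent with the hypothesis of Theorem~\ref{th2}, where periodicity holds in the variables complementary to the reflection direction; periodicity genuinely in $x_N$ would bound only $x^k_N$, which is already supplied by Part (i). Given the witnesses from Part (i), we translate $x^k$ by integer multiples of the period vectors in the periodic components of $x'$ to bring $x'^k$ into a fixed compact fundamental domain. The inequality $u(x^k) < u_{\lambda_k}(x^k)$ is preserved, since both $u$ and $u_{\lambda_k}$ are invariant under these translations, and the translated sequence is bounded in $\R^N$.

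The main obstacle is the BCN step in Part (i): one has to verify the hypotheses of \cite{BCN}, Lemma 2.1, for the superharmonic function $u - u_{\lambda_k}$ on the possibly unbounded component $\omega$. In our setting this reduces to the observation that $\omega \subset \{y_N < -R\}$ keeps $\omega$ away from the free boundary of $\slk$ and that, combined with the smallness of $u_{\lambda_k}$ provided by \eqref{smallxN}, we obtain the decay of $u - u_{\lambda_k}$ at infinity within $\omega$ that the maximum principle requires.
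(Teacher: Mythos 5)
Your proof is correct and rests on the same mechanism as the paper's: apply the maximum principle for unbounded domains (\cite{BCN}, Lemma $2.1$) to the connected components of the witness set once it is known to lie in a half-space $\{y_N<-R\}$ on which both $u$ and $u_{\lambda_k}$ are below $\varepsilon$. The route you take is, however, cleaner and more direct. The paper sets up a fairly elaborate contradiction argument involving a supremum $k_0=\sup k_0(\tilde\lambda,\tilde x)$ taken over all admissible pairs of sequences in a family $\Lambda$, proves $k_0<\infty$, and then deduces $u\ge u_{\lambda_k}$ in the whole $\Sigma_{\lambda_k}$ for $k\ge k_0$, contradicting the definition of $\ol$. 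You instead observe that the argument can be localised: for each single $k$, if the witness set $W^{\lambda_k}$ were entirely contained in $\{y_N<-R\}$, the maximum principle would empty it, contradicting $W^{\lambda_k}\neq\emptyset$; hence $W^{\lambda_k}\cap\{y_N\ge -R\}\neq\emptyset$ for all $k$, and one simply selects $x^k$ there. This avoids the global supremum construction entirely and is logically tighter. A few small remarks. First, your opening claim that $W^{\lambda}\neq\emptyset$ for \emph{every} $\lambda<\ol$ is a slight overclaim: the definition of $\ol$ as an infimum only guarantees that for each $k$ one can \emph{choose} some $\lambda_k\in[\ol-1/k,\ol)$ with $W^{\lambda_k}\neq\emptyset$; this is all you actually use, so the proof is unaffected. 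Second, the framing ``along some subsequence'' is unnecessary: your argument rules out $W^{\lambda_k}\subset\{y_N<-R\}$ for each individual $k$, with no passage to subsequences. Third, your closing characterisation of what the BCN lemma needs (``decay at infinity'') is not quite what that lemma asks for — it suffices that the difference $u-u_{\lambda_k}$ is bounded, superharmonic (or satisfies $-\Delta w\ge c(x)w$ with $c\le 0$, as here), and nonnegative on $\partial\omega$ — but your application coincides with the paper's. Finally, your reading of ``periodic in $x_N$'' as a slip for ``periodic in $x'$'' is right: it matches Theorem~\ref{th2}, and the paper's own proof of part (ii) bounds $(x')^k$, not $x^k_N$, by translating along period vectors, exactly as you do.
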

\begin{proof}
$(i)$ It follows from the definition of $\ol$ that we can choose a sequence $\ol-1/k\leq\lambda_{k}<\ol$ and a point $x^{k}\in\slk$ such that $u(x^{k})<u_{\lambda_{k}}(x^{k})$. By construction, we have that $x^{k}_{N}<\lambda_{k}<\ol$; what remains to prove is that we can choose these sequences in such a way that $x^{k}_{N}$ is bounded from below.
 We define
\begin{eqnarray}\notag
\Lambda=\{\big((\lambda_{k})_{k},(x^{k})_{k}\big):\ol-1/k\leq\lambda_{k}<\ol
\text{, }x^{k}\in\slk\text{ and }u(x^{k})<u_{\lambda_{k}}(x^{k})\}
\end{eqnarray}
and we argue by contradiction. We assume that for any couple of sequences $(\tilde{\lambda},\tilde{x})=\big((\lambda_{k})_{k},(x^{k})_{k}\big)\in\Lambda$, we have $x^{k}_{N}\to-\infty$. Hence, once we fix $M>0$ and such a couple $(\tilde{\lambda},\tilde{x})$, we can find $\overline{k}$ such that $x^{k}_{N}<-M$, for $k\geq\overline{k}$. Now, if we set
\begin{equation}\notag
k_{0}(\tilde{\lambda},\tilde{x})=\min\{\overline{k}:x^{k}_{N}<-M\text{, for } k\geq\overline{k}\},
\end{equation} 
we have that $x^{k}_{N}<-M$ for $k\geq k_{0}(\tilde{\lambda},\tilde{x})$, while $x^{k_{0}(\tilde{\lambda},\tilde{x})-1}\geq-M$.

After that we set
\begin{eqnarray}\notag
k_{0}=\sup\{k_{0}(\tilde{\lambda},\tilde{x}):(\tilde{\lambda},\tilde{x})\in
\Lambda\};
\end{eqnarray}
if $k_{0}=\infty$, the family $\{k_{0}(\tilde{\lambda},\tilde{x}):(\tilde{\lambda},\tilde{x})\in
\Lambda\}$ would be a diverging sequence $k_{j}$ of positive integers, that we can assume to be increasing and such that $k_{j}>j$. For any $j$, we set $i=k_{j}-1$ and consider the corresponding couple $(\tilde{\lambda},\tilde{x})$: we set $\mu_{i}=\lambda_{i}$ and $s^{i}=x^{i}$. The couple $(\mu,s)$ still belongs to $\Lambda$ and $s^{i}_{N}\geq-M$, a contradiction.

Therefore, we have that $k_{0}<\infty$ and, for any $k\geq k_{0},$ $u-\ulk\geq 0$ in $\{-M<x_{N}<\lambda_{k}\}$. Now, if we choose $M$ so large that $u(x)<\varepsilon$ for $x_{N}>2(\ol-1)-M$, we have, for $k\geq k_{0}$
\begin{eqnarray}\notag
\begin{cases}
-\Delta(u-u_{\lambda_{k}})=f(u)-f(u_{\lambda_{k}})\geq 0 & \text{in $\omega$}\\
u-u_{\lambda_{k}}=0 & \text{on $\partial\omega$},
\end{cases} 
\end{eqnarray}
where $\omega$ is any connected component of the set $\Omega_{k}=\{x_{N}<-M\}\cap\{u-u_{\lambda_{k}}<0\}$. Therefore, by the maximum principle for possibly unbounded domains (see \cite{BCN}, Lemma $2,1$), we get that $u-\ulk\geq 0$ in $\omega$, hence $\Omega_{k}=\emptyset$, that is $u-\ulk\geq 0$ in $\slk$, for $k\geq k_{0}$.

The same is true for any $\lambda>\lambda_{k_{0}+1}$. Otherwise, we would be able to find a couple $(\lambda,x^{\lambda})$ such that $u(x^{\lambda})<u_{\lambda}(x^{\lambda})$, with $x^{\lambda}\in\Sigma_{\lambda}$ and $\lambda>\lambda_{k_{0}+1}$. As a consequence, $\lambda=\tilde{\lambda}_{k_{0}+1}$, for an appropriate $\tilde{\lambda}$, so $u-u_{\lambda}\geq 0$ in $\Sigma_{\lambda}$, which is not possible.

$(ii)$ It follows from the periodicity that we can redefine $x^{k}$ in order for $(x^{'})^{k}$ to be bounded.
\end{proof}

\section{Results with periodicity}

Now we can proceed with the proof of Theorem \ref{th2} in the case $M=N-1$.

\begin{proof}
As first we note that, by statement $(ii)$ of Lemma \ref{lemmaol}, $\ol>-\infty$, otherwise $u\equiv 0$. Since $u-\uol\geq 0$ in $\Sigma_{\ol}$, the strong maximum principle yields that either $u\equiv \uol$ or $u>\uol$ in $\Sigma_{\ol}$. Now we argue by contradiction and assume that the second possibility holds true. We take a sequence of real numbers $\lambda_{k}$ and a sequence of points $x^{k}\in\Sigma_{\lambda_{k}}$ as in Proposition \ref{profile}. By the boundedness of $x^{k}$, we have that, up to a subsequence, $x^{k}\to x^{\infty}$, so, by the (\ref{relol}), we get that $u(x^{\infty})\leq u_{\ol}(x^{\infty})$. Since we are assuming that $u>u_{\ol}$ in $\Sigma_{\ol}$, we have that $x^{\infty}_{N}=\ol$. By the Hopf Lemma, we obtain that $u_{N}(x^{'},\ol)<0$, but the mean value theorem yields that
\begin{eqnarray}\notag
0<\ulk(x^{k})-u(x^{k})=2(\lambda_{k}-x^{k}_{N})u_{N}((x^{'})^{k},\xi^{k})
\end{eqnarray}
with $x^{k}_{N}<\xi^{k}<2\lambda^{k}-x^{k}_{N}$.
Letting $k\to\infty$, we conclude that $u_{N}(x^{\infty})\geq 0$, a contradiction. Hence we have $u=\uol$ in $\Sigma_{\ol}$.
\end{proof}

Now let us consider the general case. In next proposition, hypothesis $(ii)$ of Theorem \ref{th2} can be replaced by the weaker assumptions
\begin{eqnarray}
\begin{cases}
u(y,z^{'},z_{N})\to 0 &\text{as $|z^{'}|\to\infty$, uniformly in the other variables}\\
u(y,z^{'},z_{N})\to 0 &\text{as $z_{N}\to\infty$, uniformly in the other variables.}
\end{cases}
\label{limrelaxed}
\end{eqnarray}
Under these hypotheses,
% implies that condition (\ref{decxn}) holds, hence 
it is possible to define $\ol<\infty$ as before. 
\begin{proposition}
Let $u>0$ be a bounded solution to equation (\ref{eqp}) satisfying condition (\ref{limrelaxed}). Assume that $f$ satisfies (\ref{condf}). Assume furthermore that $\ol>-\infty$.

$(i)$ Then, for any positive integer $k$, there exists $\ol-1/k\leq\lambda_{k}<\ol$ and a point $x^{k}=(y^{k},z^{k})\in\slk$, with $\{z^{k}\}$ bounded, such that
\begin{eqnarray}\notag
 u(x^{k})<u_{\lambda_{k}}(x^{k})
\end{eqnarray}

$(ii)$ If, in addition, $u$ is periodic in $y$, then the sequence $x^{k}$ can be taken in such a way that it is bounded.
\label{profile1}
\end{proposition}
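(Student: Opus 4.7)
The plan is to mirror the proof of Proposition \ref{profile}, with the scalar $x^k_N$ replaced by the vector $z^k$ and with both decay properties in (\ref{limrelaxed}) used in combination.

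For part $(i)$, the existence of sequences $\lambda_k\in[\ol-1/k,\ol)$ and $x^k=(y^k,z^k)\in\slk$ with $u(x^k)<\ulk(x^k)$ is immediate from the definition of $\ol$. Set
\begin{eqnarray}\notag
\Lambda=\{((\lambda_k)_k,(x^k)_k):\ol-1/k\leq\lambda_k<\ol,\; x^k\in\slk,\; u(x^k)<\ulk(x^k)\}
\end{eqnarray}
and argue by contradiction, supposing that $|z^k|\to\infty$ for every couple in $\Lambda$. Introduce
\begin{eqnarray}\notag
k_0(\tilde\lambda,\tilde x)=\min\{\overline k:|z^k|>M\text{ for }k\geq\overline k\},\qquad k_0=\sup\{k_0(\tilde\lambda,\tilde x):(\tilde\lambda,\tilde x)\in\Lambda\},
\end{eqnarray}
with $M>0$ to be chosen large. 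The same diagonal extraction as in Proposition \ref{profile} gives $k_0<\infty$; otherwise one could build a couple in $\Lambda$ along which $|z^{k_j-1}|\leq M$ for infinitely many $k_j$, contrary to the assumption. Hence for every $k\geq k_0$, every couple in $\Lambda$ satisfies $|z^k|>M$, which says exactly that $u-\ulk\geq 0$ on $\slk\cap\{|z|\leq M\}$.

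The remaining step is to propagate this inequality to $\slk\cap\{|z|>M\}$ by the maximum principle. Using (\ref{limrelaxed}), pick $R>0$ with $u(x)<\varepsilon$ whenever $|z'|>R$ or $z_N>R$. For $x\in\slk$ the reflection satisfies $\ulk(x)=u(y,z',2\lambda_k-z_N)<\varepsilon$ as soon as $|z'|>R$ or $z_N<2\lambda_k-R$. The complementary set $\{|z'|\leq R,\; z_N\in[2\lambda_k-R,\lambda_k)\}$ is uniformly bounded in $k$, since $\lambda_k\in[\ol-1,\ol)$. Hence, for $M$ large enough, $|z|>M$ in $\slk$ forces $\ulk(x)<\varepsilon$. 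If $\omega$ is any connected component of $\{u-\ulk<0\}\cap\slk\cap\{|z|>M\}$, then $u<\ulk<\varepsilon$ on $\omega$, and (\ref{condf}) yields
\begin{eqnarray}\notag
\begin{cases}
-\Delta(u-\ulk)=f(u)-f(\ulk)\geq 0 &\text{in }\omega\\
u-\ulk=0 &\text{on }\partial\omega,
\end{cases}
\end{eqnarray}
the boundary condition being ensured by continuity along $\{z_N=\lambda_k\}$, along $\{|z|=M\}$, and along any internal sign change. Lemma $2.1$ of \cite{BCN} then gives $\omega=\emptyset$, so that $u-\ulk\geq 0$ in $\slk$ for $k\geq k_0$, contradicting $(\lambda_k,x^k)\in\Lambda$ and proving that $\{z^k\}$ can be chosen bounded.

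For part $(ii)$, starting from a couple produced in part $(i)$ with $\{z^k\}$ bounded, the periodicity of $u$ in $y$ allows one to translate each $y^k$ by integer multiples of the periods into a fixed fundamental cell. Since the reflection defining $\ulk$ acts only on $z_N$, the function $\ulk$ inherits the same periodicity in $y$, so the strict inequality $u(x^k)<\ulk(x^k)$ is preserved, yielding a bounded sequence $x^k$. I do not foresee a substantial obstacle: the delicate point is the geometric fact that in $\slk$ the complement of the region where $\ulk$ decays is uniformly bounded, and this is precisely what the two separate limits in (\ref{limrelaxed}) grant.
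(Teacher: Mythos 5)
Your argument is correct and follows the paper's proof in all essential respects: you use the same set $\Lambda$, the same $k_0(\tilde\lambda,\tilde x)$ construction from Proposition \ref{profile} to rule out the contradiction hypothesis, both decay conditions in (\ref{limrelaxed}) to control $u$ and $\ulk$ separately, and the Berestycki--Caffarelli--Nirenberg maximum principle for unbounded domains. The only cosmetic difference is that you work with the ball $\{|z|\leq M\}$ whereas the paper uses the truncated cylinder $Q_R=\{|z'|\leq R,\ z_N\geq -R\}$; both are bounded in $z$ once intersected with $\slk$ and the resulting argument is identical.
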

This is a generalisation of Proposition \ref{profile}, for which we have nevertheless presented an independent proof. 
\begin{proof}
As in the proof of Proposition \ref{profile}, by definition of $\ol$, we can find a sequence of real numbers $\ol-1/k\leq\lambda_{k}<\ol$ and a sequence of points $x^{k}=(y^{k},z^{k})\in\slk$ such that (\ref{relol}) holds. The difference is that now we want to prove that this sequence can be chosen in such a way that $z^{k}$ is bounded. In order to do so we will argue by contradiction. By construction, we know that $z^{k}_{N}\leq\ol$. In the notation of Proposition \ref{profile}, we define, for $R>0$ and $(\tilde{\lambda},\tilde{x})\in\Lambda$, the number
\begin{eqnarray}\notag
k_{0}(R,\tilde{\lambda},\tilde{x})=\inf\{k_{0}:z^{k}_{N}\leq-R,|(z^{'})^{k}|\geq R\text{ }\forall k\geq k_{0}\}.
\end{eqnarray}
Now we put
\begin{eqnarray}\notag
k_{0}(R)=\sup\{k_{0}(R,\tilde{\lambda},\tilde{x})\};
\end{eqnarray}
exactly as in Proposition \ref{profile}, we get that $k_{0}(R)<\infty$, for any $R>0$ and $u-\ulk\geq 0$ in $\Sigma_{\lambda_{k}}\cap Q_{R}$ for any $k\geq k_{0}$, where we have set $Q_{R}=\{|z^{'}|\leq R,\text{ }z_{N}\geq-R \}$.

By the decay assumptions, if $R$ is large enough, we have that $u(y,z)<\varepsilon$ for $|z^{'}|>R$ and $\ulk(y,z)<\varepsilon$ for $z_{N}<-R$ and for any $k$. Hence, if
%By the uniform decay in $z$, we can find an $M>\max\{0,\ol\}$ such that $u(y,z)<\varepsilon$ for $z_{N}>2(\ol-1)-M$, $|z^{'}|>M$. In the above argument, we choose $R=\max{2(\ol-1)-M,M}$, we find the correspondent $k_{0}=k_{0}(R)$ and we claim that $u-u_{\lambda}\geq 0$ in $\Sigma_{\lambda}$, for any $\lambda\geq\lambda_{0}$, that contradicts the definiition of $\ol$. By the same argument as in the proof of proposition (\ref{profile}), it is not restrictive to assume that $\lambda=\lambda_{k}$, for some $k>k_{0}$.
we set $\Omega_{k}=\{u-\ulk<0\}\cap\Sigma_{\lambda_{k}}$, we get that, for any connected component $\omega$ of $\Omega_{k}$,
\begin{eqnarray}\notag
\begin{cases}
-\Delta(u-u_{\lambda_{k}})=f(u)-f(u_{\lambda_{k}})\geq 0 & \text{in $\omega$}\\
u-u_{\lambda_{k}}=0 & \text{on $\partial\omega$},
\end{cases} 
\end{eqnarray}
hence, by rhe maximum principle for possibly unbounded domains (see \cite{BCN}, Lemma $2,1$), $\omega=\emptyset$, as desired.

\end{proof}

The conclusion of the proof of Theorem \ref{th2} is similar to what we have done in the case $M=N-1$. As first we observe that, by the behaviour of $u$ for $z_{N}\to-\infty$, applying Lemma \ref{lemmaol}, we get $\ol>-\infty$. Then we take a sequence $x^{k}=(y^{k},z^{k})$ as in Proposition \ref{profile1}; up to a subsequence, we can assume that $x^{k}\to x^{\infty}=(y^{\infty},z^{\infty})$. Passing to the limit in (\ref{relol}), we can see that $u(y^{\infty},z^{\infty})\leq\uol(y^{\infty},z^{\infty})$. If $u>\uol$ in $\Sigma_{\ol}$, we get that $(y^{\infty},z^{\infty})\in\partial\Sigma_{\ol}$, but this contradicts the Hopf Lemma, as we have seen above.

%Note that, in the case $M=0$, we have an alternative proof of the result (\ref{th1}) by Gidas, Ni and Nirenberg.

\section{Results without periodicity}

As first we observe that condition (\ref{Neumann}) enables us to relate the study of equation (\ref{eqp}) to the study of one dimensional problem (\ref{ODE}).
The results concerning this one-dimensional problem are probably known, for sake of completeness we report the proofs. 

Before giving these proofs, let us fix some terminology. If $u$ is a bounded solution to (\ref{eqp}), then for any sequence $|x^{k}|\to\infty$, it is possible to find a subsequence such that $u^{k}(x)=u(x+x^{k})\to u^{\infty}(x)$ in the $C^{2}_{loc}(\R^{N})$ sense, and $u^{\infty}$ is still a solution. In the sequel, this kind of solutions, obtained as a limit of sequences constructed as above, will be referred to as \emph{profiles}. In the sequel, we will say that a profile is one dimensional if it is a function depending just on the $x_{N}-$variable.
\begin{lemma}
Let $u$ be a bounded solution to equation (\ref{eqp}) satisfying (\ref{decxn}) with $z=x_{N}$, and with $f$ fulfilling (\ref{condf}). Then any profile is one dimensional if and only if (\ref{Neumann}) holds.
\label{lemmaprof1d}
\end{lemma}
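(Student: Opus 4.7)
The plan is to prove both implications by arguing at the level of sequences $x^k$ with $|x^k|\to\infty$ and passing to subsequential limits $u^{\infty}$ in $C^2_{loc}$, which is possible because $u$ is bounded and $f\in C^1$, so by standard elliptic estimates $u$ is bounded in $C^{2,\alpha}$. Split $x^k=((x')^k,x_N^k)$.

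For the easy direction ($\Leftarrow$: all profiles one-dimensional $\Rightarrow$ (\ref{Neumann})), argue by contradiction. If (\ref{Neumann}) fails, there exists $x_N^0\in\R$, $\varepsilon>0$ and a sequence $(x')^k$ with $|(x')^k|\to\infty$ and $|\nabla_{x'}u((x')^k,x_N^0)|\geq\varepsilon$. Set $x^k=((x')^k,x_N^0)$, so $|x^k|\to\infty$. The translated sequence $u^k(x)=u(x+x^k)$ converges, along a subsequence, in $C^2_{loc}$ to a profile $u^{\infty}$. Evaluating gradients at the origin gives $|\nabla_{x'}u^{\infty}(0,0)|\geq\varepsilon>0$, contradicting one-dimensionality of $u^{\infty}$.

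For the other direction ($\Rightarrow$: (\ref{Neumann}) implies every profile is one-dimensional), the key preliminary step is to upgrade (\ref{Neumann}) to hold \emph{uniformly on compact $x_N$-intervals}. Indeed, by Schauder estimates $\nabla u$ is uniformly continuous on $\R^N$; hence if there existed a compact interval $I$, an $\varepsilon>0$ and sequences $|y'_n|\to\infty$, $\eta_n\in I$ with $|\nabla_{x'}u(y'_n,\eta_n)|\geq\varepsilon$, passing to a subsequence $\eta_n\to\eta_{\infty}\in I$ and using uniform continuity would yield $|\nabla_{x'}u(y'_n,\eta_{\infty})|\geq\varepsilon/2$ for $n$ large, contradicting (\ref{Neumann}) at the fixed level $\eta_{\infty}$. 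Now take any profile $u^{\infty}$ arising from $|x^k|\to\infty$. If $|x_N^k|\to\infty$ (even only along a subsequence), then (\ref{decxn}) plus uniformity in $x'$ gives $u^k\to 0$ on compacts, so $u^{\infty}\equiv 0$. Otherwise $x_N^k$ is bounded and thus, after extraction, $|(x')^k|\to\infty$ and $x_N^k\to\bar x_N$. For fixed $(x',x_N)$ in a compact set $K$, the point $(x'+(x')^k,x_N+x_N^k)$ has its first component diverging and its second component staying in a bounded interval; applying the uniform version of (\ref{Neumann}) yields $\nabla_{x'}u(x+x^k)\to 0$ uniformly on $K$. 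Combined with the $C^2_{loc}$ convergence $\nabla_{x'}u^k\to\nabla_{x'}u^{\infty}$, this gives $\nabla_{x'}u^{\infty}\equiv 0$, hence $u^{\infty}$ depends only on $x_N$.

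The main delicate point is the uniform-in-$x_N$-on-compacts upgrade of (\ref{Neumann}); once that is in hand, both directions reduce to extracting a $C^2_{loc}$ limit and reading off $\nabla_{x'}u^{\infty}$ at a single point. The rest is purely organizational bookkeeping of which component of $x^k$ diverges.
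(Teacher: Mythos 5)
Your proof is correct and follows the paper's profile-extraction strategy in both directions. The one substantive difference is in the implication ``(\ref{Neumann}) implies every profile is one-dimensional'': the paper dispatches this with the remark that $C^{2}_{loc}$ convergence implies pointwise convergence, which works cleanly when the translation sequence has the form $x^{k}=((x')^{k},0)$; but a profile may arise from a sequence whose $x_{N}$-component is bounded yet varies with $k$, and then (\ref{Neumann}), being pointwise in $x_{N}$, does not directly control $\nabla_{x'}u$ at the shifting heights $x_{N}+x_{N}^{k}$. Your preliminary lemma upgrading (\ref{Neumann}) to uniform decay on compact $x_{N}$-intervals --- obtained from uniform continuity of $\nabla u$, which indeed follows from boundedness of $u$ in $C^{2,\alpha}(\R^{N})$ by interior Schauder estimates --- is precisely the ingredient needed to make this case rigorous, and your subsequent dichotomy (either $|x_{N}^{k}|\to\infty$ along a subsequence, handled by (\ref{decxn}), or $x_{N}^{k}$ bounded, handled by the uniform version of (\ref{Neumann})) is sound. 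The other direction, which you argue by contradiction, is the contrapositive of the paper's direct argument and is likewise correct.
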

\begin{proof}
If any profile is one dimensional, for any $|(x^{'})^{k}|\to\infty$, there is a subsequence such that $u^{k}(x)=u(x^{'}+(x^{'})^{k},x_{N})\to v(x)$ in $C^{2,\alpha}_{loc}(\R^{N})$, with $v_{j}\equiv 0$, for $1\leq j\leq N-1$. This implies, in particular, that $u_{j}^{k}\to 0$ pointwise, therefore $u_{j}((x^{'})^{k},x_{N})\to 0$ for any $x_{N}\in\R$. Since the sequence $(x^{'})^{k}$ is arbitrary, we conclude that $u_{j}(x^{'},x_{N})\to 0$ as $|x^{'}|\to\infty$, for any $x_{N}$. 

The converse is true because $C^{2}_{loc}$ convergence implies pointwise convergence.
\end{proof}
Now we are going to study Problem (\ref{ODE}). For solutions satisfying
\begin{eqnarray}
v(t)\leq C|t|^{-(1+\sigma)} &\text{for any $|t|\geq M$}
\end{eqnarray}
for suitable constants $M>0$, $\gamma>0$, we define 
\begin{eqnarray}\notag
H(v)=\int_{-\infty}^{\infty}\frac{1}{2}(v^{'})^{2}-F(v)dt
\end{eqnarray}
and, for any $\lambda\in\R$,
\begin{eqnarray}\notag
E_{\lambda}(v)=\int_{-\infty}^{\infty}(t-\lambda)\Big(\frac{1}{2}
(v^{'})^{2}-F(v)\Big)dt.
\end{eqnarray}
In order to show that $H(v)$ and $E(v)$ are well defined and finite for such solutions, we prove the following lemma.

\begin{lemma}
Let $v>0$ be a solution to Problem (\ref{ODE}). Then

$(i)$ $v$ is symmetric with respect to $\lambda$, for some $\lambda\in\R$, and $v^{'}(t)>0$ for any $t<\lambda$.

$(ii)$ For any $t\in\R$, we have $\frac{1}{2}(v^{'}(t))^{2}+F(v(t))=0$.

$(iii)$ If we assume, in addition, that $v(t)\leq C|t|^{-(1+\sigma)}$, for some $\sigma>0$, then $|v^{'}(t)|\leq C|t|^{-(1+\sigma)}$, for any $|t|\geq M$.
\label{lemmaODE}
\end{lemma}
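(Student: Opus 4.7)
The plan is to establish part (ii) first, then deduce (i) and (iii) from it. For (ii), multiplying the equation $-v''=f(v)$ by $v'$ and integrating shows that the energy $E(t):=\frac{1}{2}(v'(t))^{2}+F(v(t))$ is constant. To identify this constant as $0$, I need $v'(t)\to 0$ as $|t|\to\infty$. Since $v$ is bounded and $v''=-f(v)$ is also bounded, $v'$ is Lipschitz on $\R$; combined with the decay $v(t)\to 0$, a standard contradiction argument (if $|v'(t_{k})|\geq\varepsilon$ along some $t_{k}\to\infty$, then Lipschitz continuity forces $|v'|\geq\varepsilon/2$ on intervals of fixed length, which would make $v$ oscillate with amplitude bounded below, contradicting $v\to 0$) yields $v'(t)\to 0$. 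Passing to the limit in the energy identity gives $E\equiv 0$.

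For (i), since $v>0$ on $\R$ and $v\to 0$ at $\pm\infty$, the supremum of $v$ is attained at some $\lambda\in\R$, where $v'(\lambda)=0$. The reflected function $w(t):=v(2\lambda-t)$ solves the same ODE with the same Cauchy data at $t=\lambda$, so by the Cauchy uniqueness theorem $w\equiv v$, proving symmetry about $\lambda$. For the strict monotonicity $v'(t)>0$ on $(-\infty,\lambda)$: if instead there were $t_{1}>\lambda$ with $v'(t_{1})=0$, the same reflection argument applied at $t_{1}$ would make $v$ symmetric about two distinct axes, hence periodic of period $2(t_{1}-\lambda)>0$, incompatible with $v(t)\to 0$ at infinity (as $v\not\equiv 0$). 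Therefore $v'$ does not vanish outside $\lambda$, and since $v$ has its maximum at $\lambda$, continuity and the symmetry give $v'(t)>0$ on $(-\infty,\lambda)$.

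For (iii), part (ii) gives $(v'(t))^{2}=-2F(v(t))$. Since $f\in C^{1}(\R)$ and $f(0)=0$, there exists $C>0$ such that $|F(s)|\leq Cs^{2}$ for every $0\leq s\leq\|v\|_{\infty}$, hence
\begin{eqnarray*}
|v'(t)|\leq\sqrt{2C}\,v(t)\leq C'|t|^{-(1+\sigma)}\quad\text{for }|t|\geq M,
\end{eqnarray*}
as claimed.

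The main obstacle is the justification that $v'(t)\to 0$ in part (ii): conservation of energy alone does not force this, so the argument must combine the boundedness of $v''$ (which comes from the ODE and the boundedness of $v$) with the hypothesis $v\to 0$ through a uniform-continuity/oscillation argument. Everything else, including the symmetry axis and the strict monotonicity, then reduces to Cauchy uniqueness applied to the reflection $w(t)=v(2\lambda-t)$.
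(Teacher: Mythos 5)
Your proof is correct; part (i) matches the paper, while parts (ii) and (iii) take genuinely different routes. For (ii), you prove $v'(t)\to 0$ independently, using the Lipschitz bound on $v'$ (from the boundedness of $v''=-f(v)$) together with an oscillation argument, and then pass to the limit in the conserved energy. The paper instead reads the constant off the conservation law directly: since $F(v(t))\to 0$, one has $(v'(t))^2\to 2C$, so $C\geq 0$, and if $C>0$ then $|v'|$ is eventually bounded away from zero and of fixed sign, forcing $v$ to be unbounded. The paper's version is slightly shorter because it never needs to establish $v'\to 0$ as a separate step, but both arguments are sound. For (iii), your argument is cleaner than the paper's: from (ii) you get $|v'(t)|=\sqrt{-2F(v(t))}$, and the bound $|F(s)|\leq Cs^2$ on $[0,\|v\|_\infty]$ (available since $f\in C^1$ and $f(0)=0$) immediately gives $|v'|\leq\sqrt{2C}\,v\leq C'|t|^{-(1+\sigma)}$, with no need to distinguish the cases $f'(0)<0$ and $f'(0)=0$. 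The paper first dispatches $f'(0)<0$ via exponential decay and then, for $f'(0)=0$, runs a blow-up contradiction with the rescaled function $v^k(t)=|t_k|^\sigma v(|t_k|t)$; unpacking that argument, the crucial inequality is the same quadratic bound on $F$, so the rescaling is an avoidable detour that your direct pointwise estimate bypasses.
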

\begin{proof}
$(i)$ Since $v(t)\to 0$ as $t\to\infty$, the solution must have a maximum point at $t=\lambda$, for some $\lambda\in\R$. In particular it satisfies the Cauchy problem
\begin{eqnarray}\notag
\begin{cases}
-v^{''}=f(v) &\text{on $\R$}\\\notag
v(\lambda)=v_{max}\\\notag
v^{'}(\lambda)=0.
\end{cases}
\end{eqnarray}
A computation shows that $v_{\lambda}(t)=v(2\lambda-t)$ satisfies the same Cauchy problem, hence $v_{\lambda}=v$. If $v$ had another critical point $\mu\neq\lambda$, it would also be symmetric with respect to $\mu$, and hence periodic, but this is not possible because it tends to $0$ at infinity.

$(ii)$ Multiplying the ODE by $v^{'}$ and integrating we obtain the relation
\begin{eqnarray}\notag
0\leq\frac{1}{2}(v^{'})^{2}=-F(v)+C,
\end{eqnarray}
where $C$ is a suitable constant. Letting $t\to\infty$, we get that $C\geq 0$. If we had $C>0$, we would get that $(v^{'})^{2}\to 2C>0$ as $t\to\infty$, which is not possible because $v\to 0$ as $t\to\infty$. Finally we get that $C=0$ and $v^{'}\to 0$ as $t\to\infty$.

$(iii)$ If $f^{'}(0)<0$, the claim follows from the exponential decay of the derivative, so we can assume that $f^{'}(0)=0$.
We assume by contradiction that for any positive integer $k$, we can find $|t_{k}|>k$ such that $|v^{'}(t_{k})|>k|t_{k}|^{-(1+\sigma)}$. Now we set
\begin{eqnarray}\notag
v^{k}(t)=|t_{k}|^{\sigma}v(|t_{k}|t)
\end{eqnarray}
A computation shows that, for $k$ large enough and for any $\frac{1}{2}<|t|<2$, we have
\begin{eqnarray}\notag
(v^{k})^{'}(t)=|t_{k}|^{1+\sigma}\sqrt{-2F(v(|t_{k}|t))}\leq|t_{k}|^{1+\sigma}
\sqrt{2C|t_{k}|^{-2(1+\sigma)}|t|^{-2(1+\sigma)}}\leq C.
\end{eqnarray}
However, we can see that
\begin{eqnarray}
(v^{k})^{'}(t_{k}/|t_{k}|)\geq|t_{k}|^{1+\sigma}k|t_{k}|^{-(1+\sigma)}=k,
\end{eqnarray}
a contradiction.
\end{proof}

\begin{proposition}
If there exists a nontrivial solution to Problem (\ref{ODE}), then it is unique up to a translation.
\label{propODE}
\end{proposition}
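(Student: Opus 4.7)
The plan is to reduce uniqueness to the classical Cauchy--Lipschitz theorem by arranging, after suitable translations, that two positive solutions $v_1, v_2$ of (\ref{ODE}) carry identical Cauchy data at a common point. Since $f \in C^1$ is locally Lipschitz, this will force $v_1 \equiv v_2$.

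By Lemma \ref{lemmaODE}(i), each $v_i$ has a unique critical point, which is its global maximum; after a translation I may assume $v_i'(0) = 0$ and set $M_i := v_i(0) > 0$. Evaluating the energy identity in Lemma \ref{lemmaODE}(ii) at $t = 0$ gives $F(M_i) = 0$, and the same identity on $(-\infty, 0)$, combined with the strict positivity of $v_i'$ there guaranteed by Lemma \ref{lemmaODE}(i), yields $F(s) < 0$ for every $s \in (0, M_i)$.

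The crux of the argument is to prove $M_1 = M_2$. Suppose, for contradiction, that $M_1 < M_2$. By continuity and strict monotonicity of $v_2$ on $(-\infty, 0)$, together with $v_2(t) \to 0$ as $t \to -\infty$, there is a unique $t^{\ast} < 0$ with $v_2(t^{\ast}) = M_1$. Applying the energy identity at $t^{\ast}$ then yields
\[
\tfrac{1}{2}\bigl(v_2'(t^{\ast})\bigr)^{2} = -F(M_1) = 0,
\]
which contradicts Lemma \ref{lemmaODE}(i), asserting $v_2'(t^{\ast}) > 0$. Hence $M_1 = M_2 =: M$, so the two solutions share the Cauchy data $v(0) = M$, $v'(0) = 0$, and the Cauchy--Lipschitz uniqueness theorem concludes $v_1 \equiv v_2$.

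The only genuine content is the monotonicity-plus-energy argument ruling out distinct maxima; this is where the hypotheses of (\ref{ODE}) and the three conclusions of Lemma \ref{lemmaODE} combine. Everything else is routine, and the result holds because the maximum value of any positive solution is forced to be the smallest positive zero of $F$.
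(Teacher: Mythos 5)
Your proposal is correct and is essentially the same argument as the paper's: translate so the two solutions share a symmetry axis (equivalently, maxima at a common point), note that the energy identity forces the value at the maximum to be a zero of $F$, and then derive a contradiction if the maxima differ by evaluating the energy identity at the point where the larger solution passes through the smaller one's maximum value — this forces a vanishing derivative where strict monotonicity forbids it. The only cosmetic difference is that you work on the left of the maximum and the paper works on the right; the conclusion via Cauchy–Lipschitz uniqueness at the shared critical point is identical.
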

It follows from the Cauchy uniqueness theorem that any nontrivial solution to Problem (\ref{ODE}) is strictly positive. Nevertheless, we point out that a nontrivial solution does not always exist, for instance if $f(u)=((u-\beta)^{+})^{p}$ with $\beta>0$, as we will see later.
\begin{proof}

%In fact, we can define the function $u(s,t)=v(t)$, which is a solution to the PDE (\ref{eqp}) in dimension $N=2$ that satisfies all the hypothesis of proposition (\ref{propH}): it is enough to notice that, for any $s$, 
%\begin{eqnarray}\notag
%H(u,s)=H(v)=\int_{-\infty}^{\infty}(v^{'})^{2}dt>0
%\end{eqnarray}
%and to take $\mu=H(v)/E_{0}(v)$ ($H(v)$ and $E_{0}(v)$ are both finite by the decay rate of $v$).

Let us assume that there are two solutions $v>0$ and $w>0$, that are not one the translated of the other. Up to a translation, we can assume that the symmetry axes are the same, that is there exists $\lambda\in\R$ such that $v=v(|t-\lambda|)$ and $w=w(|t-\lambda|)$. 

If $v(\lambda)=w(\lambda)$, then we also have $v^{'}(\lambda)=w^{'}(\lambda)=0$, since $\lambda$ is a maximum point for both $v$ and $w$; therefore, by the Cauchy uniqueness theorem, we get that $v\equiv w$.

Now, assume, for instance, that $w(\lambda)>\ v(\lambda)$. By continuity, there exists $t_{0}>\lambda$ such that $w(t_{0})=v(\lambda)$. As a consequence, we conclude that
\begin{eqnarray}\notag
0>w^{'}(t_{0})=\sqrt{-2F(w(t_{0}))}=\sqrt{-2F(v(\lambda))}=v^{'}(\lambda),
\end{eqnarray}  
a contradiction.
\end{proof}

Now, let us prove a quite general Lemma, in which we do not need to assume that $u$ is a solution to some PDE.
 
\begin{lemma}
Let us denote $x=(y,z)\in\R^{M}\times\R^{N-M}$. Let $u:\R^{N}\to\R$ be a continuous function such that 
\begin{eqnarray}\notag
u(y,z)\to 0 &\text{as $|z|\to\infty$, uniformly in $y$.}
\end{eqnarray}

$(i)$ Assume that for any sequence $|y^{k}|\to\infty$ it is possible to find a subsequence such that $u^{k}(x)=u(y+y^{k},z)\to 0$ in the $C^{0}_{loc}$ sense. Then $u(x)\to 0$ as $|x|\to\infty$.

$(ii)$ Let $M=1$. Assume that for any sequence $y^{k}\to\infty$ it is possible to find a subsequence such that $u^{k}(x)=u(y+y^{k},z)\to 0$ in the $C^{0}_{loc}$ sense. Then
\begin{eqnarray}
u(y,z)\to 0 &\text{as $y\to\infty$, uniformly in $z$}.
\end{eqnarray}
\label{lemmasolrad}
\end{lemma}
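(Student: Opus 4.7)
The plan is to argue both parts by contradiction using a single dichotomy on the $z$-component of a hypothetical bad sequence; the unbounded branch is killed by the standing uniform decay in $z$, and the bounded branch is killed by the respective profile hypothesis.

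For part $(i)$, suppose there exist $\varepsilon>0$ and $x^k=(y^k,z^k)$ with $|x^k|\to\infty$ and $|u(x^k)|\geq\varepsilon$. If $\{|z^k|\}$ has an unbounded subsequence, the hypothesis that $u(y,z)\to 0$ as $|z|\to\infty$ uniformly in $y$ forces $u(x^k)\to 0$ along that subsequence, a contradiction. Otherwise $\{z^k\}$ is bounded; extract a subsequence with $z^k\to z^\infty$. Since $|x^k|\to\infty$ and $|z^k|$ is bounded, necessarily $|y^k|\to\infty$, so assumption $(i)$ gives a further subsequence along which $u^k(y,z):=u(y+y^k,z)\to 0$ in $C^0_{loc}(\R^N)$. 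Evaluating at $(0,z^k)$ yields $u(x^k)=u^k(0,z^k)\to 0$, contradicting the lower bound.

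For part $(ii)$, the argument is formally identical: start from $(y^k,z^k)$ with $y^k\to+\infty$ and $|u(y^k,z^k)|\geq\varepsilon$, and split on whether $\{|z^k|\}$ is bounded. The unbounded case is again ruled out by uniform decay in $z$. In the bounded case, extract $z^k\to z^\infty$; the hypothesis in $(ii)$ now applies precisely because $y^k\to+\infty$, producing a subsequence with $u^k\to 0$ in $C^0_{loc}$, and the same evaluation at $(0,z^k)$ gives the contradiction. This yields the desired one-sided decay as $y\to+\infty$, uniformly in $z$.

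I do not expect a real obstacle. The only step that warrants a brief justification is that $C^0_{loc}$ convergence $u^k\to 0$ combined with $z^k\to z^\infty$ implies $u^k(0,z^k)\to 0$; this is a routine $\varepsilon/2$ argument using uniform convergence on a fixed closed ball around $(0,z^\infty)$ together with continuity. Everything else is a direct translation between the hypotheses and the quantitative statement of decay at infinity.
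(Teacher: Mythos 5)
Your proof is correct and follows essentially the same strategy as the paper: use the uniform decay in $z$ to reduce to a bounded $z$-slab, and then use the $C^0_{loc}$ convergence of translates to handle the bounded-$z$ regime. The only (cosmetic) difference is that you argue by contradiction with an explicit dichotomy on $\{|z^k|\}$, whereas the paper argues directly by noting that $\sup_{|z|\le M}|u(y,z)|\to 0$ as $|y|\to\infty$ via uniform convergence on the compact set $K=\{|z|\le M,\ y=0\}$.
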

\begin{proof}
$(i)$ By the decay in $z$, we have that, for any $\varepsilon>0$, there exists $M>0$ such that $u(y,z)<\varepsilon$ for $|z|\geq M$. Since $u^{k}\to u^{\infty}$ in the $C^{0}_{loc}$ sense, the convergence is uniform in the compact set $K=\{|z|\leq M,y=0\}$. Hence for any sequence $|y^{k}|\to\infty$, there is a subsequence such that
\begin{eqnarray}\notag
\sup_{K}|u^{k}(x)|=\sup_{|z|\leq M}|u(y^{k},z)|\to 0,
\end{eqnarray}
therefore $u(y,z)\to 0$ as $|y|\to\infty$, uniformly in $z$, so we have the statement.

$(ii)$ We essentially repeat the same proof, with the only difference that we consider only sequences $y^{k}\to\infty$.
\end{proof}

Now we are going to prove Theorem \ref{thprof1d=0}.

\begin{proof}
%If $\beta\geq ||u||_{\infty}$, then $u\equiv 0$, hence we can assume $\beta<||u||_{\infty}$.

%$(i)$ $u$ is radially symmetric and decreasing, that is $u=u(|x-y|)$, for some $y\in\R^{N}$.

For any sequence $|(x^{'})^{k}|\to\infty$, by Lemma \ref{lemmaprof1d}, any corresponding profile $v$ is one-dimensional and satisfies (\ref{ODE}), so, by our assumption about $f$, $v\equiv 0$. Since this is true for any profile, Lemma \ref{lemmasolrad} yields that $u(x)\to 0$ as $|x|\to\infty$, hence, by the result by Gidas, Ni, Nirenberg in \cite{GNN}, $u$ is radially symmetric and radially decreasing. 

%From now on, we will assume for simplicity $y=0$.

%$(ii)$ There exists a unique $R>0$ such that $u=\phi_{R}+\beta$ on $B_{R}$.

%We take $R>0$ in such a way that $u(R)=\beta$. It follows that the function $v=u-\beta$ satisfies
%\begin{eqnarray}\notag
%\begin{cases}
%-\Delta v=v^{p} &\text{for $|x|\leq R$}\\\notag
%v=0 &\text{for $|x|=R$}\\\notag
%v>0 &\text{for $|x|<R$}\\\notag
%\frac{\partial v}{\partial\nu}<0 &\text{for $0<|x|\leq R$}
%\end{cases}
%\end{eqnarray}
%hence $v=\phi_{R}$.

%$(iii)$ Since $u<\beta$ outside $B_{R}$, the equation yields that it is harmonic outside this ball. Since it is also radially symmetric and vanishes at infinity, we get that $u(x)=\alpha|x|^{2-N}$. Now, by the continuity of $u^{'}$ and integrating the ODE satisfied by $\phi_{R}$, we can prove that 
%\begin{eqnarray}\notag
%\alpha=\frac{1}{N-2}\int_{0}^{R}\phi_{R}^{p}(s)s^{N-1}ds.
%\end{eqnarray}
%Finally, we observe that, by the continuity of $u$, we have
%\begin{eqnarray}\notag
%\beta=\alpha %R^{2-N}=\frac{R^{2-N}}{N-2}\int_{0}^{R}\phi_{R}^{p}(r)r^{N-1}dr=\\\notag
%\frac{R^{-(p-1)/2}}{N-2}\int_{0}^{1}\phi_{1}^{p}(s)s^{N-1}ds
%\end{eqnarray}
%as required.
\end{proof}
Now we can prove Corollary \ref{nonex2}
\begin{proof}
Assume by contradiction that such a solution exists. Then by Theorem \ref{thprof1d=0} it is radially symmetric, that is, up to a translation, $u(x)=v(|x|)$, and harmonic outside a ball, so $u(x)=a\log(|x|)+b$, for $|x|$ large enough. If $a=0$, by (\ref{decxn}), we get $b=0$, so $u\equiv 0$. Otherwise, $a\neq 0$ and $b\in\R$, but this contradicts condition (\ref{decxn}).
\end{proof}
Now we can prove Proposition \ref{proprad}.
\begin{proof}
In the proof, we set $F(u)=\frac{1}{p+1}|u|^{p+1}-\frac{1}{2}u^{2}$ and $f(u)=|u|^{p-1}u-u$.

In order to prove the proposition, we will assume by contradiction that $0<||u||_{\infty}<(p+1/2)^{1/p-1}$ and we will see that this yields that for any $|(x^{'})^{k}|\to\infty$, the corresponding profile is identically $0$, hence, by Lemma \ref{lemmasolrad}, up to a translation, $u(x)=U(|x|)$, but, by our assumption, we have that $||u||_{\infty}<(p+1/2)^{1/p-1}<\max U$, a contradiction.

As before, by Lemma \ref{lemmaprof1d}, we get that any profile is one dimensional and satisfies (\ref{ODE}), therefore, by point $(i)$ of Lemma \ref{lemmaODE}, we know that $v=v(|t-\lambda|)$, for an appropriate $\lambda\in\R$. By symmetry, we get that $v^{'}(\lambda)=0$, hence, by point $(ii)$ of Lemma \ref{lemmaODE}, $F(v(\lambda))=0$. Anyway, we have that $v(\lambda)=||v||_{\infty}\leq||u||_{\infty}<(p+1/2)^{1/p-1}$, so we conclude that $||v||_{\infty}=0$.

\end{proof}

\section{Proofs of theorems $7$ and $8$}

In this section we are going to deal with the cases in which problem (\ref{ODE}) has a positive solution, so we can have a positive profile when we translate in the $x^{'}$-directions. In order to deal with this case, we need to consider the energy $H(u,x^{'})$ and the momentum $E_{\lambda}(u,x^{'})$ of a solution, hence we need some further assumptions about the decay rate of $u$ in $x_{N}$. In next lemma, we see that it is enough to prescribe the decay rate of $u$, we do not need any further assumption about the gradient. 

\begin{lemma}

%a bounded function such that $|u(x)|\leq C|x_{N}|^{-\alpha}$ if $|x_{N}|\geq M$, for some $\alpha>0$. Assume furthermore that $\Delta u\in L^{\infty}(\R^{N})$ is bounded and $|\Delta u(x)|\leq C|u(x)|$ if $|x_{N}|\geq M$. Then
Let $u>0$ be a bounded solution to (\ref{eqp}) with $f\in C^{2}(\R)$ satisfying (\ref{condf}) and $f^{'}(0)=0$. Assume furthermore that $u(x)\leq C|x_{N}|^{-\alpha}$ for $|x_{N}|\geq M$, for some constants $M>0$ and $\alpha\geq 1$. Then 

$(i)$ the gradient satisfies 
\begin{eqnarray}
|\nabla u(x)|\leq C|x_{N}|^{-\alpha} &\text{for $|x_{N}|\geq M$.}
\label{gradestxN}
\end{eqnarray}

$(ii)$ If $\alpha\geq 2$, then
\begin{eqnarray}
|\nabla u(x)|\leq C|x_{N}|^{-(1+\alpha)} &\text{for $|x_{N}|\geq M$.}
\label{gradestxN+1}
\end{eqnarray}

\label{lemmaestgrad}
\end{lemma}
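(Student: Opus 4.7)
The plan is to derive both parts from interior elliptic regularity, with an additional rescaling argument needed for the improved estimate in part $(ii)$.

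For part $(i)$, I would fix $x_0$ with $|x_{0,N}|\geq 2M$ and work on the unit ball $B_1(x_0)$. Since $|x_N|\geq |x_{0,N}|/2\geq M$ throughout $B_1(x_0)$, the decay hypothesis gives $\|u\|_{L^\infty(B_1(x_0))}\leq C|x_{0,N}|^{-\alpha}$. Because $f(0)=0$, $f\in C^1$, and $u$ is globally bounded, the mean value theorem yields $|f(u)|\leq L|u|$ with $L$ independent of $x_0$, hence $\|f(u)\|_{L^\infty(B_1(x_0))}\leq C|x_{0,N}|^{-\alpha}$ as well. Applying standard interior regularity (for instance $W^{2,p}$ estimates with large $p$ followed by Sobolev embedding) to $-\Delta u=f(u)$ on $B_1(x_0)$ then gives
\begin{equation*}
|\nabla u(x_0)|\leq C\bigl(\|u\|_{L^\infty(B_1(x_0))}+\|f(u)\|_{L^\infty(B_1(x_0))}\bigr)\leq C|x_{0,N}|^{-\alpha}.
\end{equation*}
For $|x_{0,N}|$ in the bounded range $[M,2M]$, boundedness of $\nabla u$ (again from interior regularity, applied globally since $u$ and $f(u)$ are bounded) makes the inequality trivial after enlarging $C$.

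For part $(ii)$, assume $\alpha\geq 2$ and rescale. Fix $x_0$ with $|x_{0,N}|$ large, set $r=|x_{0,N}|/2$, and define
\begin{equation*}
v(y)=r^{\alpha}\,u(x_0+ry),\qquad y\in B_1(0).
\end{equation*}
On $B_1(0)$ one has $|x_{0,N}+ry_N|\geq r$, so $|v(y)|\leq r^{\alpha}\cdot C\,r^{-\alpha}=C$, i.e., $v$ is uniformly bounded. A direct computation shows that $v$ solves $-\Delta v=r^{\alpha+2}f(r^{-\alpha}v)$. The crucial use of $f\in C^2$ with $f(0)=0$ and $f'(0)=0$ is the Taylor expansion $f(s)=\tfrac{1}{2}f''(0)s^2+O(s^3)$ near $s=0$; since $r^{-\alpha}v\to 0$ uniformly as $r\to\infty$, this yields
\begin{equation*}
r^{\alpha+2}f(r^{-\alpha}v)=\tfrac{1}{2}f''(0)\,r^{2-\alpha}v^2+O\!\bigl(r^{2-2\alpha}v^3\bigr).
\end{equation*}
Because $\alpha\geq 2$, the exponents $2-\alpha$ and $2-2\alpha$ are nonpositive, so $\|\Delta v\|_{L^\infty(B_1(0))}\leq C$ uniformly in $x_0$. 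Interior gradient estimates then give $|\nabla v(0)|\leq C$, and unwinding the scaling via $\nabla v(0)=r^{\alpha+1}\nabla u(x_0)$ produces $|\nabla u(x_0)|\leq C r^{-(\alpha+1)}=C|x_{0,N}|^{-(1+\alpha)}$.

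The only real subtlety lies in the scaling for part $(ii)$: one must track the powers of $r$ carefully and observe that $\alpha\geq 2$ together with $f'(0)=0$ (which kills the linear term in the Taylor expansion of $f$) is exactly what is needed to keep the rescaled right-hand side bounded. Were $f'(0)\neq 0$, the term $f'(0)\,r^2 v$ would appear and blow up regardless of the decay rate of $u$, so the present gain of one extra power of $|x_{0,N}|^{-1}$ genuinely relies on the flatness of $f$ at the origin together with the higher regularity $f\in C^2$.
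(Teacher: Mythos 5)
Your proof is correct, and for part $(ii)$ it is essentially the same argument as the paper's (a rescaling by $|x_N|^{\alpha}$ combined with interior elliptic estimates, with $f'(0)=0$ and $\alpha\geq 2$ exactly balancing the exponents), though you run it directly rather than by contradiction. For part $(i)$, however, your route is genuinely more elementary. The paper rescales by $|x_N^k|^{\alpha-1}$ on a fixed annular domain and relies on the quadratic bound $|f(u)|\leq Cu^2$ (hence on $f\in C^2$ with $f'(0)=0$) to keep the rescaled right-hand side bounded; you instead apply interior $W^{2,p}$ plus Sobolev on unit balls centered at $x_0$, using only the linear bound $|f(u)|\leq L|u|$ that follows from $f(0)=0$ and $f\in C^1$. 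Your version of $(i)$ therefore requires neither $f'(0)=0$ nor $f\in C^2$ nor the rescaling, and works for any $\alpha>0$, which is a small but real gain in generality; the hypotheses in the lemma's statement are, of course, still used in your proof of $(ii)$. One cosmetic remark: for $f\in C^2$ the Taylor remainder gives $f(s)=\tfrac12 f''(0)s^2+o(s^2)$, not $O(s^3)$ (that would need $f\in C^3$), but your argument only uses $|f(s)|\leq Cs^2$, so this imprecision is harmless.
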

\begin{proof}
$(i)$ Assume by contradiction that (\ref{gradestxN}) fails. Then it is possible to find a sequence of points $x^{k}\in\R^{N}$, with $|x_{N}^{k}|\geq k$, such that
\begin{eqnarray}\notag
|\nabla u((x^{'})^{k},x_{N}^{k})|\geq k|x_{N}^{k}|^{-\alpha}.
\end{eqnarray}
Now we define

\begin{eqnarray}\notag
v^{k}(x^{'},x_{N})=|x_{N}^{k}|^{\alpha-1}u\Big(|x_{N}^{k}|\big(x^{'}+
\frac{(x^{'})^{k}}{|x_{N}^{k}|}\big),|x_{N}^{k}|x_{N}\Big)
\end{eqnarray}
and
\begin{eqnarray}\notag
\Omega=\Big\{|x^{'}|<1,\frac{1}{2}<|x_{N}|<2\Big\}.
\end{eqnarray}

By the decay rate of $u$ in $x_{N}$ and the fact that $|x_{N}^{k}|\to\infty$, we have
\begin{eqnarray}\notag
|v^{k}(x)|\leq C|x_{N}^{k}|^{-1}|x_{N}|^{-\alpha}\leq C.
\end{eqnarray}
for any $x\in\Omega$ and for $k$ large enough. 
Since $f\in C^{2}$ and $f^{'}(0)=0$, we deduce that $|f(u)|/u^{2}$ is bounded in a neighbourhood of the origin, so
\begin{eqnarray}\notag
0\leq|\Delta v^{k}(x)|=|x_{N}^{k}|^{\alpha+1}\bigg|f\Big(u\Big(|x_{N}^{k}|\big(x^{'}+
\frac{(x^{'})^{k}}{|x_{N}^{k}|}\big),|x_{N}^{k}|x_{N}\Big)\Big)\bigg|\leq\\\notag
C|x_{N}^{k}|^{\alpha+1}u^{2}\Big(|x_{N}^{k}|\big(x^{'}+
\frac{(x^{'})^{k}}{|x_{N}^{k}|}\big),|x_{N}^{k}|x_{N}\Big)\leq
C|x_{N}^{k}|^{1-\alpha}|x_{N}|^{-2\alpha}\leq C
\end{eqnarray}
for any $x\in\Omega$ and for $k$ large enough .%Hence we get that $\Delta v_{k}$ is bounded in $\Omega$ uniformly with respect to $k$.

By elliptic estimates we have that, for any ball $B\subset\subset\Omega$, for any $p>1$ and for any $k$,
\begin{eqnarray}\notag
||v^{k}||_{W^{2,p}(B)}\leq C(||v^{k}||_{L^{\infty}(\Omega)}+||\Delta v^{k}||_{L^{\infty}(\Omega)})\leq C.
\end{eqnarray}
Now we take $p>N$ and we conclude, by the Sobolev embedding $C^{1,\alpha}(B)\subset W^{2,p}(B)$ and since the ball is arbitrary, we have that $||\nabla v^{k}||_{L^{\infty}(\Omega)}$ is uniformly bounded with respect to $k$. 

On the other hand, an explicit computation gives that 
\begin{eqnarray}\notag
\Big|\nabla v^{k}\Big(0,\frac{x_{N}^{k}}{|x_{N}^{k}|}\Big)\Big|=|x_{N}^{k}|^{\alpha}
|\nabla u((x^{'})^{k},x_{N}^{k})|\geq k\to\infty,
\end{eqnarray}
a contradiction.

$(ii)$ The proof is the same as before, with the only difference that now we set
\begin{eqnarray}\notag
v^{k}(x^{'},x_{N})=|x_{N}^{k}|^{\alpha}u\Big(|x_{N}^{k}|\big(x^{'}+
\frac{(x^{'})^{k}}{|x_{N}^{k}|}\big),|x_{N}^{k}|x_{N}\Big).
\end{eqnarray}
The only point where we use that $\alpha\geq 2$ is to say that $||\Delta v^{k}||_{L^{\infty}(\Omega)}$ is uniformly bounded with respect to $k$. 
\end{proof}
By this lemma we see that, if $u$ fulfills (\ref{decay}) then the gradient satisfies
\begin{eqnarray}
|\nabla u(x)|\leq C|x_{N}|^{-(1+\sigma)} &\text{for $|x_{N}|\geq M$,}
\end{eqnarray}
for suitable constants $M>0$, $\sigma>0$, so it is possible to define the energy and the momentum, even if $f^{'}(0)=0$.

Now we recall that, under condition (\ref{decxn}), it is possible to start the moving plane procedure from the positive $x_{N}$ direction (see Proposition \ref{propstart}) and define $\ol$ as in (\ref{defol}). 

It is possible to show that, under assumption $(a)$ of Theorem \ref{thndim}, any profile is positive and we can find a profile $v$ that is symmetric with respect to $\ol$.

\begin{proposition}
If $u>\uol$ in $\sol$, then there exists a positive solution $v$ which is symmetric about the hyperplane $\{x_{N}=\ol\}$.
\end{proposition}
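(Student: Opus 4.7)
The plan is to exploit Proposition \ref{profile} together with the hypothesis that $u > \uol$ strictly on $\sol$ in order to produce, via translation in the $x'$-variables, a profile that is symmetric about the hyperplane $\{x_{N}=\ol\}$. First I would invoke Proposition \ref{profile} to extract sequences $\ol - 1/k \leq \lambda_k < \ol$ and points $x^k = ((x')^k, x^k_N)\in\slk$ with $\{x^k_N\}$ bounded and $u(x^k) < \ulk(x^k)$. I claim that, up to a subsequence, $|(x')^k| \to \infty$. Indeed, if $(x')^k$ stayed bounded then $x^k$ itself would be bounded and, after extraction, $x^k \to x^\infty$ with $x^\infty_N \leq \ol$; passing to the limit in the strict inequality would give $u(x^\infty) \leq \uol(x^\infty)$. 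The case $x^\infty_N < \ol$ contradicts the strict assumption, while $x^\infty_N = \ol$ is excluded by combining the mean value identity $\ulk(x^k) - u(x^k) = 2(\lambda_k - x^k_N)\, u_N((x')^k, \xi^k)$ with $\xi^k \in (x^k_N, 2\lambda_k - x^k_N)$, and Hopf's lemma applied to the strictly positive function $u - \uol$ on $\sol$, exactly as in the proof of Theorem \ref{th2}.

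Next, having $|(x')^k| \to \infty$, I would set $u^k(y) = u(y' + (x')^k, y_N)$; boundedness of $u$ and interior elliptic $C^{2,\alpha}_{loc}$ estimates produce a subsequence converging in $C^{2}_{loc}(\R^N)$ to a bounded nonnegative solution $v$ of $-\Delta v = f(v)$. Hypothesis (a) passes to the limit, yielding $v(y', \overline{x_N}) \geq \delta > 0$ for every $y'$, so $v \not\equiv 0$ and the strong maximum principle forces $v > 0$ throughout $\R^N$.

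For the symmetry, for every $\lambda \geq \ol$ the moving plane inequality $u \geq u_\lambda$ on $\Sigma_\lambda$ passes to the limit to give $v \geq v_\lambda$ on $\Sigma_\lambda$. Extracting a further subsequence so that $x^k_N \to \tau \in (-\infty, \ol]$, the $C^2_{loc}$-convergence and $\lambda_k \to \ol$ turn $u^k(0, x^k_N) < u^k_{\lambda_k}(0, x^k_N)$ into $v(0, \tau) \leq v_{\ol}(0, \tau)$. If $\tau < \ol$, combining with $v \geq v_{\ol}$ on $\sol$ gives equality at the interior point $(0, \tau)$; setting $w = v - v_{\ol} \geq 0$ and writing $-\Delta w = c(x) w$ with $c$ bounded, the strong maximum principle forces $w \equiv 0$, i.e. $v$ is symmetric about $\{x_N = \ol\}$. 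If $\tau = \ol$, then either $w \equiv 0$ (done) or $w > 0$ strictly in $\sol$; in the latter case Hopf's lemma at $(0, \ol)$ gives $2 v_N(0,\ol) = \partial_{x_N} w(0, \ol) < 0$, while dividing the mean value identity for $u^k$ by $2(\lambda_k - x^k_N) > 0$ and passing to the limit yields $v_N(0, \ol) \geq 0$, a contradiction.

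The main obstacle is precisely the boundary case $\tau = \ol$, where the interior strong maximum principle is not immediately available; it is resolved by coupling Hopf's lemma on $v - v_{\ol}$ at the reflection plane with the mean value identity produced by the moving plane at level $\lambda_k$, both passed to the limit via the $C^2_{loc}$-convergence of the translated profiles, and relying on the fact that Proposition \ref{profile} guarantees that $\{x^k_N\}$ does not escape to $-\infty$, so that the profile in the $x_N$-direction is nontrivially captured.
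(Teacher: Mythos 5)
Your proof is correct and follows the same overall moving-plane/profile strategy as the paper: extract the sequence from Proposition \ref{profile}, show $|(x')^k|\to\infty$ via the mean-value identity combined with Hopf's lemma on $u-\uol$, form the translated profile $v$, pass the moving-plane inequality $u\geq u_\lambda$ on $\Sigma_\lambda$ to the limit, and conclude symmetry of $v$ by the strong maximum principle together with a Hopf contradiction at the reflection hyperplane. The one place where you genuinely deviate is the proof that $v>0$. You pass hypothesis $(a)$ directly to the limit to get $v(\cdot,\overline{x_N})\geq\delta>0$, and then invoke the strong maximum principle; the paper instead argues that $H(u^k,x')\to H(v,x')$ and that $|H(v,x')|>\gamma>0$, forcing $v>0$. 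Both are valid, but the energy argument implicitly leans on the limiting behaviour of $H(u,\cdot)$, which is only fully justified later in the proof of Theorem \ref{thndim} (itself relying on hypothesis $(a)$); your direct use of hypothesis $(a)$ is more self-contained and is arguably the cleaner route. Your explicit case split between $\tau<\ol$ and $\tau=\ol$ in the symmetry step is an equivalent reorganisation of the paper's dichotomy $v>v_{\ol}$ versus $v\equiv v_{\ol}$ on $\sol$, and is handled correctly.
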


\begin{proof}
We take a sequence $x^{k}$ as in Proposition \ref{profile} and we define 
\begin{eqnarray}\notag
u^{k}(x)=u(x^{'}+(x^{'})^{k},x_{N}).
\end{eqnarray}
By the Ascoli-Arzel\'a theorem, up to a subsequence, $u^{k}$ converges to a non-negative solution $v$ to equation (\ref{eqp}). 

Now we want to prove that $v>0$. We point out that $|(x^{'})^{k}|\to\infty$. If not, by the boundedness of $x^{k}_{N}$, it would be possible to find a subsequence $x^{k}\to x^{\infty}$. Hence, passing to the limit in (\ref{relol}), we would get that $u(x^{\infty})\leq\uol(x^{\infty})$; since $u>\uol$ in $\sol$, we get that $x^{\infty}\in\partial{\sol}$, which contradicts the Hopf lemma.
In fact, still by $(\ref{relol})$ and by Lagrange theorem, we have that
\begin{eqnarray}\notag
0<\ulk(x^{k})-u(x^{k})=2(\lambda_{k}-x^{k})u^{k}_{N}(0,\xi^{k}),
\end{eqnarray}
for an appropriate $x^{k}_{N}<\xi^{k}<2\lambda_{k}-x^{k}_{N}$. Therefore, passing to the limit, we get that $u_{N}(0,\ol)\geq 0$, which contradicts the Hopf Lemma. 

We are now in position to show that $v>0$. In fact, $H(u^{k},x^{'})=H(u,x^{'}+(x^{'})^{k})\to H(v,x^{'})$, so $|H(v,x^{'})|>\gamma>0$, hence $v>0$.

It remains to prove that such a profile is symmetric. Since the translation is orthogonal to the $x_{N}$ direction, we have that $u^{k}\geq u^{k}_{\ol}$ in $\sol$, hence $v\geq v_{\lambda}$ in $\sol$. By the strong maximum principle, we can see that $v>v_{\lambda}$ or $v\equiv v_{\lambda}$ in $\sol$; we want to exclude the first possibility. In order to do so, we take a subsequence such that $x^{k}_{N}\to x^{\infty}_{N}$ and pass to the limit in (\ref{relol}), and we obtain that $v(0,x^{\infty})\leq v(0,x^{\infty})$. Now we observe that, if $v>v_{\ol}$, we get that $x^{\infty}_{N}=\ol$, which contradicts the Hopf Lemma, exactly as above. 

\end{proof}

In view of condition (\ref{decay}), the decay in $x_{N}$ holds both for $x_{N}\to\infty$ and for $x_{N}\to-\infty$, therefore we can also start the moving plane procedure from the left, and define
\begin{eqnarray}\notag
\ul=\sup\{\lambda_{0}: u-u_{\lambda}\geq 0\text{ in $\tilde{\Sigma}_{\lambda}$,} \forall\lambda\leq\lambda_{0}\},
\end{eqnarray}
where $\tilde{\Sigma}_{\lambda}=\{x_{N}>\lambda\}$.

As above, by construction, we get $\ul>-\infty$. Furthermore, we can prove that $\underline{\lambda}\leq\overline{\lambda}$. If not, we would have $u_{N}\geq 0$ in $\{x_{N}<\underline\lambda\}$ and $u_{N}\leq 0$ in $\{x_{N}>\overline\lambda\}$, so $u_{N}=0$ in
$\{\overline\lambda<x_{N}<\underline\lambda\}$. By the strong maximum principle we get, for instance, that $u_{N}\equiv 0$ in $\Sigma_{\underline{\lambda}}$, hence $u\equiv 0$.
\\

\noindent\textbf{Remark 9.} If $\ul=\ol$, then $u$ is symmetric with respect to $x_{N}$, that is $u=u(x^{'},|x_{N}-\ol|)$.
\\

To conclude the proof of Theorem \ref{thndim}, we have to rule out the possibility $\ul<\ol$. In order to do so, we prove the following proposition.

%\begin{corollary}
%Let $u$ be a bounded solution to (\ref{eqp}) such that $|u(x)|\leq C|x_{N}|^{-\alpha}$ if $|x_{N}|\geq M$, for some $\alpha>0$. Assume furthermore that $f$ satisfies (\ref{condf}) and $f^{'}(0)=0$. Then
%\begin{eqnarray}
%|\nabla u(x)|\leq C|x_{N}|^{-(1+\alpha)} &\text{for $|x_{N}|\geq M$}
%\label{gradalpha}
%\end{eqnarray}
%and
%\begin{eqnarray}
%|u_{ij}(x)|\leq C|x_{N}|^{-(2+\alpha)} &\text{for $|x_{N}|\geq M$}
%\label{uijalpha}
%\end{eqnarray}
%for a suitable constant $M>0$.
%\end{corollary}
%\begin{proof}
%Since $f^{'}(0)=0$, $f$ is sublinear in a small neighbourhood of the origin, therefore, in order to prove the (\ref{gradalpha}), it is enough to apply Lemma \ref{lemmaestgrad} with $a(x)\equiv 0$ and $g=f$. 

%Then we derive the equation and get
%\begin{eqnarray}\notag
%-\Delta u_{j}=f^{'}(u)u_{j}
%\end{eqnarray}
%Since $u$ is bounded and $f\in C^{1}$, we can apply Lemma \ref{lemmaestgrad} with $a(x)=-f^{'}(u)$, $g\equiv 0$ and $\alpha$ replaced by $1+\alpha$ to conclude the proof.
%\end{proof}

%Up to now, we have gathered that, if $u$ is not symmetric about the hyperplane $\{x_{N}=\lambda\}$, we can construct a profile which is positive and symmetric.
\begin{proposition}
Let $u>0$ be a bounded positive solution to equation (\ref{eqp}) satisfying (\ref{decay}), with $f$ as in (\ref{condf}). Assume furthermore that

$(i)$ $|H(u,x^{'})|\geq\gamma>0$ for $|x|^{'}$ large enough

$(ii)$ there exists $\mu$ such that $E_{\mu}(u,x^{'})\to 0$ for $x^{'}\to\infty$.

Then $u(x)=u(x^{'},|x_{N}-\lambda|)$, for a suitable $\lambda\in\mathbb{R}$ (that is, $u$ is symmetric in $x_{N}$).
\label{propH}
\end{proposition}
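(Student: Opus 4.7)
The plan is to run the moving plane method simultaneously from $x_N = +\infty$ and $x_N = -\infty$, producing the critical values $\ol$ and $\ul$ already introduced, and to show $\ol = \ul$, which by Remark 9 gives the desired symmetry in $x_N$. We already know $\ul \leq \ol$; moreover, if $u \equiv \uol$ in $\sol$ or $u \equiv \uul$ in $\stul$, symmetry is automatic by the moving plane conclusion. So I argue by contradiction, assuming $\ul < \ol$ together with the strict inequalities $u > \uol$ in $\sol$ and $u > \uul$ in $\stul$ (which the strong maximum principle forces in the remaining case).

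Under the strict inequality at $\ol$, the preceding proposition supplies a positive entire solution $v$ of $-\Delta u = f(u)$ on $\R^N$, obtained as a $C^{2}_{loc}$-limit $u(x' + (x')^{k}, x_N) \to v(x', x_N)$ along a sequence $|(x')^{k}| \to \infty$, symmetric in $x_N$ about $\ol$. Reflecting the whole argument in $x_N$ yields the analogous profile $w$, symmetric about $\ul$, along some sequence $|(y')^{k}| \to \infty$.

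The core identity is the following. Since $v(x', \cdot)$ is symmetric in $x_N$ about $\ol$, so are $v_N^{2}$, $|\nabla_{x'} v|^{2}$ and $F(v)$; multiplying by the antisymmetric factor $(x_N - \ol)$ makes the integrand of $E_{\ol}(v, 0)$ odd about $\ol$, hence $E_{\ol}(v, 0) = 0$. On the other hand, translation invariance gives $E_\mu(u, (x')^{k}) = E_\mu\bigl(u(\cdot + (x')^{k}, \cdot), 0\bigr) \to E_\mu(v, 0)$, and by hypothesis $(ii)$ the left-hand side tends to $0$, so $E_\mu(v, 0) = 0$ as well. Subtracting,
\begin{equation*}
0 \;=\; E_\mu(v, 0) - E_{\ol}(v, 0) \;=\; (\ol - \mu)\, H(v, 0).
\end{equation*}
By hypothesis $(i)$ and the analogous convergence $H(u, (x')^{k}) \to H(v, 0)$, we have $|H(v, 0)| \geq \gamma > 0$, forcing $\mu = \ol$. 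The mirror argument applied to $w$ gives $\mu = \ul$, so $\ol = \ul$, the sought contradiction.

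The main technical obstacle is justifying the passage to the limit in $H$ and $E_\mu$, whose integrands live on the unbounded $x_N$-axis. The decay hypothesis (\ref{decay}) is uniform in $x'$ and is therefore preserved under translation in the $x'$ variables; together with Lemma \ref{lemmaestgrad}, it provides an $x'$-independent majorant $C(1+|x_N|)^{-(1+\sigma)}$ for $u$ and $|\nabla u|$, and correspondingly for $F(u)$ (using $f\in C^{2}$ with $f(0)=0$). These bounds give an integrable majorant for the integrands of $H(u, \cdot + (x')^k)$ and $E_\mu(u, \cdot + (x')^k)$ valid for all $k$; combined with the $C^{2}_{loc}$-convergence on compact $x_N$-intervals, dominated convergence delivers both $H(u, (x')^{k}) \to H(v, 0)$ and $E_\mu(u, (x')^{k}) \to E_\mu(v, 0)$ and closes the argument.
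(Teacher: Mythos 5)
Your proof is correct and follows essentially the same route as the paper's: extract $x'$-translation profiles $v$ and $w$ symmetric about $\ol$ and $\ul$, and use the affine identity $E_\mu - E_\lambda = (\lambda-\mu)H$ together with hypotheses $(i)$ and $(ii)$ and the vanishing of $E_\lambda$ on each symmetric profile to force $\mu=\ol=\ul$. The paper's proof reaches the same identity via the change of variable $\tilde{u}(x)=u(x',x_N+\ol-\mu)$, which is a purely cosmetic difference, and your explicit dominated-convergence justification of the limits $H(u,(x')^k)\to H(v,0)$ and $E_\mu(u,(x')^k)\to E_\mu(v,0)$ makes precise a step the paper leaves implicit.
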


%\textbf{Remark 5} If $f^{'}(0)<0$, thanks to (\ref{expdecay}), any solution fulfilling the (\ref{decxn}) actually decays exponentially in $x_{N}$, hence the energy and the momentum are well defined and finite, so it is not restrictive to suppose $f^{'}(0)=0$ in next proposition.
\begin{proof}
We divide the proof in two steps.

$(i)$ If $u>u_{\ol}$ in $\sol$, then $\ol=\mu$.
  
We define
\begin{eqnarray}\notag
\tilde{u}(x)=u(x^{'},x_{N}+\ol-\mu)
\end{eqnarray}
and
\begin{eqnarray}\notag
\tilde{u}^{k}(x)=\tilde{u}(x^{'}+(x^{'})^{k},x_{N}).
\end{eqnarray}
It is worth to remark that the profile of the translated solution $\tilde{u}$ coincides with the translation of the profile $\tilde{v}$, that is $\tilde{u}_{k}\to\tilde{v}$, up to a subsequence. Since $v$ is symmetric about the hyperplane $\{x_{N}=\ol\}$, $\tilde{v}$ is symmetric about the hyperplane $\{x_{N}=\mu\}$, therefore, if we set
\begin{equation}\notag
g(x)=\frac{1}{2}\big(u_{N}^{2}-
|\nabla_{x^{'}}u|^{2}\big)-F(u),
\end{equation}
then we have
\begin{eqnarray}\notag
0=E_{\mu}(\tilde{v},x^{'})=\lim_{k\to\infty}E_{\mu}(\tilde{u}^{k},x^{'})=
\lim_{k\to\infty}E_{\mu}(\tilde{u},x^{'}+(x^{'})^{k})=\\\notag
\lim_{k\to\infty}\int_{-\infty}^{\infty}(x_{N}-\mu)g(x^{'}+(x^{'})^{k},x^{N}+
\ol-\mu)dx_{N}=\\\notag
\lim_{k\to\infty}\bigg\{\int_{-\infty}^{\infty}(z_{N}-\mu)g(x^{'}+(x^{'})^{k},
z_{N})dz_{N}-\int_{-\infty}^{\infty}(\ol-\mu)g(x^{'}+(x^{'})^{k},
z_{N})dz_{N}\bigg\}=\\\notag
\lim_{k\to\infty}\Big\{E_{\mu}(u,x^{'}+(x^{'})^{k})-(\ol-\mu)H(u,x^{'}+
(x^{'})^{k})\Big\}=-(\ol-\mu)H(v,x^{'}).
\end{eqnarray}
Since $H(v,x^{'})\neq 0$, we have $\ol=\mu$.

%Now we set, for $\lambda\in\R$, $\tilde{\Sigma}_{\lambda}=\{x_{N}>\lambda\}$. Arguing exactly as we did in proposition (\ref{propstart}), we can show that the moving plane procedure can also be started from the left, and we define 

%\begin{eqnarray}
%\underline{\lambda}=\sup\{\lambda_{0}: u-u_{\lambda}\geq 0\text{ in $\tilde{\Sigma}_{\lambda}$,} \forall\lambda\leq\lambda_{0}\}.
%\end{eqnarray}

$(ii)$ $\ul=\mu=\ol$.

In order to prove the statement, we start the reflection from the left and obtain that either $u$ is symmetric about the hyperplane $\{x_{N}=\ul\}$ or $u>u_{\ul}$ in $\stul$; in the second case, exactly as in Proposition \ref{profile}, we are able to construct a sequence $\ul<\lambda_{k}<\ul+1/k$ and a sequence of points $s^{k}\in\stlk$ such that $u(s^{k})<u_{\lambda_{k}}(s_{k})$, with $|(s^{'})^{k}|\to\infty$ and $\{s^{k}_{N}\}$ bounded. Passing to the limit, we get a profile $w$ which is symmetric about the hyperplane $\{x_{N}=\ul\}$. Since $\ol=\mu$, we have
\begin{eqnarray}\notag
0=\lim_{k\to\infty}E_{\ol}(u,x^{'}+(s^{'})^{k})=\lim_{k\to\infty}\int_{-\infty}
^{\infty}(x_{N}-\ol)g(x^{'}+(s^{'})^{k},x_{N})dx_{N}=\\\notag
\lim_{k\to\infty}\bigg\{\int_{-\infty}^{\infty}(x_{N}-\ul)g(x^{'}+(s^{'})^{k},
x_{N})dx_{N}-\int_{-\infty}^{\infty}(\ol-\ul)g(x^{'}+(s^{'})^{k},
x_{N})dx_{N}\bigg\}=\\\notag
E_{\ul}(w,x^{'})-(\ul-\ol)H(w,x^{'})=-(\ul-\ol)H(w,x^{'}).
\end{eqnarray}
Since $H(w,x^{'})\neq 0$, then $\ul=\ol$.

\end{proof}

Now we can recollect our results to conclude the proof of Theorem \ref{thndim}.

\begin{proof}
The idea is to apply Proposition \ref{propH}. Therefore, we have to check that hypothesis $(i)$ and $(ii)$ are satisfied.
As first, we will prove that $H(u,x^{'})$ tends to a finite positive limit as 
$|x^{'}|\to\infty$. In order to do so, we take an arbitrary sequence $|(x^{'})^{k}|\to\infty$ and we prove that, up to a subsequence, $H(u,(x^{'})^{k})$ converges to a positive limit which is indipendent of the chosen sequence.

By the Arzel\'a-Ascoli theorem, for any sequence $|(x^{'})^{k}|\to\infty$, we can find a subsequence such that $u^{k}(x)=u(x^{'}+(x^{'})^{k},x_{N})$ converges to a nonnegative profile $v$, which still verifies $-\Delta v=f(v)$. By hypothesis (a), we have that $v>0$;
by Lemma \ref{lemmaprof1d}, we get that $v$ is one-dimensional, that is $v=v(x_{N})$. Moreover, by condition (\ref{decay}) we get that $v(x_{N})\leq C|x_{N}|^{-(1+\sigma)}$ for $|x_{N}|\geq M$.

As a consequence, $v$ is a solution to problem (\ref{ODE}) for which the energy $H(v)$ and the momentum $E_{\lambda}(v)$ are well defined and finite.
Moreover,
\begin{eqnarray}\notag
H(u,(x^{'})^{k})=H(u^{k},0)\to H(v,0)=H(v)=\int_{-\infty}^{\infty}(v^{'})^{2}>0.
\end{eqnarray}
By the uniqueness of the positive solution to (\ref{ODE}), proven in Proposition \ref{propODE}, we get that the limit does not depend on the particular choice of the sequence $|(x^{'})^{k}|\to\infty$, hence
\begin{eqnarray}\notag
H(u,x^{'})\to H(v)>0 &\text{as $|(x^{'})|\to\infty$.}
\end{eqnarray}
In the same way as above, it is possible to prove that $E_{0}(u,x^{'})\to E_{0}(v)$ as $|(x^{'})|\to\infty$. Therefore
\begin{eqnarray}\notag
E_{\mu}(u,x^{'})=E_{0}(u,x^{'})-\mu H(u,x^{'})\to E_{0}(v)-\mu H(v),
\end{eqnarray}
so it is enough to take $\mu=E_{0}(v)/H(v)$. This concludes the proof of Theorem \ref{thndim}.
\end{proof}

Now we prove Theorem \ref{th2dim}. In the proof, we will use a result by Malchiodi, Gui and Xu (see \cite{GMX}, Proposition $2$).
If $N=2$, they show that $H(u,x_{1})$ is actually independent of $x_{1}$, hence it may be referred to as $H(u)$. If $H(u)\neq 0$, we can apply Proposition \ref{propH} with $\mu=E_{0}(u)/H(u)$, and the proof is finished.

It remains to deal with the case $H(u)=0$. We claim that in this case $u$ is radially symmetric, that is, up to a translation, $u=u(|x|)$, where $x=(x_{1},x_{2})\in\R^{2}$.

\begin{proposition}
In the hypothesis of Theorem \ref{th2dim}, if $H(u)=0$, then $u$ is radially symmetric.
\label{propH0}
\end{proposition}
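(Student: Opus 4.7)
The plan is to show that, when $H(u)=0$, every profile obtained by translating $u$ in the $x_{1}$-direction vanishes identically; this will upgrade the hypothesized decay in $x_{2}$ to full decay $u(x)\to 0$ as $|x|\to\infty$, and the conclusion will then follow from the classical Gidas-Ni-Nirenberg theorem.

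First, I would fix an arbitrary sequence $|x_{1}^{k}|\to\infty$ and consider the translates $u^{k}(x_{1},x_{2})=u(x_{1}+x_{1}^{k},x_{2})$. Standard elliptic estimates together with Arzel\`a-Ascoli give, along a subsequence, $u^{k}\to v$ in $C^{2}_{loc}(\R^{2})$, where $v\geq 0$ is a bounded solution of $-\Delta v=f(v)$. The hypothesis $u_{1}(x_{1},x_{2})\to 0$ as $|x_{1}|\to\infty$ combined with Lemma \ref{lemmaprof1d} forces $v$ to depend only on $x_{2}$, so $v$ solves the one-dimensional problem (\ref{ODE}).

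The next step is to pass to the limit in $H$, showing $H(v)=\lim_{k}H(u,x_{1}^{k})$. For this I would invoke Lemma \ref{lemmaestgrad} together with the decay (\ref{decay}) to obtain
\[
u(x)+|\nabla u(x)|\leq C|x_{2}|^{-(1+\sigma)}\qquad\text{for }|x_{2}|\geq M,
\]
uniformly in $x_{1}$. Since $f\in C^{2}$ with $f(0)=0$, this also forces $|F(u)|\leq C|x_{2}|^{-2(1+\sigma)}$ on the same region. Applying dominated convergence to the integrand $\tfrac{1}{2}((u^{k}_{x_{2}})^{2}-(u^{k}_{x_{1}})^{2})-F(u^{k})$ then yields the desired limit. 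The Malchiodi-Gui-Xu identity tells us that $H(u,x_{1})\equiv H(u)=0$, hence $H(v)=0$.

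Now, if $v\not\equiv 0$, Cauchy uniqueness gives $v>0$, and Lemma \ref{lemmaODE}(ii) provides the first-integral relation $\tfrac{1}{2}(v')^{2}=-F(v)$. Substituting yields
\[
H(v)=\int_{\R}\bigl(\tfrac{1}{2}(v')^{2}-F(v)\bigr)\,dx_{2}=\int_{\R}(v')^{2}\,dx_{2}>0,
\]
contradicting $H(v)=0$. Hence $v\equiv 0$; since the argument applies along every subsequence, the full family $u^{k}$ converges to $0$ in $C^{0}_{loc}$. Applying Lemma \ref{lemmasolrad}(i) with $y=x_{1}$ and $z=x_{2}$ (the decay (\ref{decay}) supplies the uniform decay in $z$ needed to invoke the lemma), we obtain $u(x)\to 0$ as $|x|\to\infty$, and Theorem \ref{th1} concludes that $u$ is radially symmetric, as claimed. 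The main technical point I expect to require care is the dominated-convergence step for $H$: one must bound $u_{1}^{2}$ and $u_{2}^{2}$ uniformly in $x_{1}$ by an integrable function of $x_{2}$, and (\ref{decay}) alone only gives pointwise control of $u$, so the gradient estimate of Lemma \ref{lemmaestgrad} is indispensable here.
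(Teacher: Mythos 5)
Your proposal follows essentially the same route as the paper: show every profile in the $x_1$-direction is one-dimensional (Lemma \ref{lemmaprof1d}), conclude a positive profile would have $H(v)=\int(v')^2>0$ via the first-integral relation, use dominated convergence to get $H(v)=H(u)=0$, force all profiles to vanish, and then upgrade to full decay via Lemma \ref{lemmasolrad} and invoke Gidas--Ni--Nirenberg. The only difference is that you spell out the dominating function for the dominated-convergence step (correctly invoking Lemma \ref{lemmaestgrad} for the gradient bound), which the paper leaves implicit.
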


\begin{proof}
In view of Lemma \ref{lemmasolrad}, it is enough to show that any profile is identically $0$ and apply the result by Gidas, Ni and Nirenberg in \cite{GNN}.

Assume, by contradiction, that one can find a sequence $|x_{1}^{k}|\to\infty$ whose correspondent profile $v$ is stricly positive. By Lemma \ref{lemmaprof1d}, this profile is one-dimensional, therefore it is the unique (up to a translation) solution to Problem (\ref{ODE}), hence we already know that $H(v)=\int_{-\infty}^{\infty}(v^{'})^{2}>0$. On the other hand, by the dominated convergence theorem, we have that $H(v)=H(u)=0$, a contradiction.
\end{proof}

\section{Solutions decaying in $N-1$ variables}
Now we are considering solutions to equation (\ref{eqp}) fulfilling (\ref{limx'}). The nonlinearity will always satisfy (\ref{condf}), sometimes it will be required to be of class $C^{2}$, sometimes $C^{1}$ will be enough. 

For such solutions, we define the energy-like functional
\begin{eqnarray}\notag
\mathcal{H}(u,x_{N})=\int_{\R^{N-1}}\frac{1}{2}\big(|\nabla_{x^{'}}u|^{2}-u_{N}^{2}\big)-F(u)
dx^{'}.
\end{eqnarray}

We point out that, in order for such a functional to be well defined and finite, we need some further information about the decay rate of $u$, for example it is enough to consider solutions $u$ fulfilling (\ref{decN}).
\\

\noindent\textbf{Remark 10.} If $f^{'}(0)<0$, any solution satisfying (\ref{limx'}) actually decays exponentially in $x^{'}$, and the same is true for the gradient, that is
\begin{eqnarray}\notag
u(x),|\nabla u(x)|\leq Ce^{-\gamma|x^{'}|} &\text{for $|x^{'}|\geq M$},
\label{expdecayx'}
\end{eqnarray}
for some $M>0$, $\gamma>0$, and this is true in any dimension $N\geq 2$, hence there are no problems to define $\mathcal{H}(u,x_{N})$.
\\

It is interesting to understand what happens in the case $f^{'}(0)=0$. It turns out that, at least in dimension $N\geq 5$, if $f\in C^{2}$, any solution fulfilling (\ref{limx'}) actually decays fast enough in $x^{'}$, so it is still possible to define $\mathcal{H}(u,x_{N})$. In dimension $2\leq N\leq 4$, it is possible to do the same under hypothesis (\ref{decN}).

Moreover, we recall that in \cite{GMX} Malchiodi, Gui and Xu showed that $\mathcal{H}(u,x_{N})$ actually depends only on $u$, hence it will be referred to simply as $\mathcal{H}(u)$.

%\begin{lemma}
%There exists a radially symmetric function $v:\R^{N}\to\R$ satisfying
%\begin{eqnarray}
%\begin{cases}
%0<v\leq 1\\
%v(x)=O(e^{-|x|}) &\text{as $|x|\to\infty$}\\
%\Delta v(x)\leq 0 &\text{for $|x|\geq M$}
%\end{cases}
%\label{barrier}
%\end{eqnarray}
%for a suitable constant $M>0$.
%\end{lemma}
%\begin{proof}
%In order to construct such a function, we point out that any radially symmetric function satisfies $v(x)=w(|x|)$, with $w:[0,\infty)\to\R$, and
%\begin{eqnarray}\notag
%\Delta v(r)=w^{''}(r)+\frac{N-1}{r}w^{'}(r)
%\end{eqnarray}
%where we have set $r=|x|$. We set $w(r)=e^{g(r)}$, and we choose $g:[0,\infty)\to\R$ in such a way that the (\ref{barrier}) are satisfied.

%A direct computation shows that
%\begin{eqnarray}\notag
%\Delta v(r)=\Big(g^{''}(r)+(g^{'}(r))^{2}+\frac{N-1}{r}g^{'}(r)\Big)e^{g(r)}
%\end{eqnarray}
%therefore it is enough to take a function $h$ of class $C^{1}$ such that 
%\begin{eqnarray}\notag
%h^{'}(r)\leq-h(r)^{2}-\frac{N-1}{r}h(r)
%\end{eqnarray}
%for $r$ large enough, and to set $g(t)=\int_{0}^{t}h(s)ds$. In order to so, we can take $h$ to be a bounded decreasing function such that $h(0)=0$, for instance $h(r)=e^{-r}-1$, therefore $v(x)=e^{-e^{-|x|}-|x|+1}$.
%\end{proof}
\begin{lemma}
Let $u>0$ be a bounded solution to (\ref{eqp}), with $f\in\ C^{1}$ satisfying (\ref{condf}) and $f^{'}(0)=0$. Assume furthermore that (\ref{limx'}) holds. Then
\begin{eqnarray}\notag
\nabla u(x^{'},x_{N})\to 0 &\text{as $|x^{'}|\to\infty$, uniformly in $x_{N}$}.
\end{eqnarray}

\label{lemmaest}

\end{lemma}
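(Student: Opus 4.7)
The plan is to derive the claim directly from interior elliptic gradient estimates, exploiting the fact that on unit balls centered far from the $x_{N}$-axis, $u$ itself is pointwise small, which forces $|f(u)|$ to be small as well (since $f(0)=0$), and hence the full $C^{1,\alpha}$ norm of $u$ on those balls is small.

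Concretely, I would fix $\varepsilon>0$ and invoke (\ref{limx'}) to produce $R>0$ such that $u<\varepsilon$ on the set $\{|x^{'}|\geq R\}$. For any center $(x^{'},x_{N})$ with $|x^{'}|\geq R+1$, the unit ball $B_{1}(x^{'},x_{N})$ is entirely contained in this set, so $\|u\|_{L^{\infty}(B_{1}(x^{'},x_{N}))}\leq\varepsilon$. Using that $f\in C^{1}$ with $f(0)=0$, I would estimate $|f(t)|\leq L|t|$ on $[0,\|u\|_{\infty}]$ for an appropriate constant $L$ depending only on $f$ and $\|u\|_{\infty}$, which yields $\|f(u)\|_{L^{\infty}(B_{1}(x^{'},x_{N}))}\leq L\varepsilon$. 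Applying interior $W^{2,p}$ estimates to $-\Delta u=f(u)$ on $B_{1}(x^{'},x_{N})$ with $p>N$, together with the Sobolev embedding $W^{2,p}\hookrightarrow C^{1,\alpha}$, I would obtain
\[
|\nabla u(x^{'},x_{N})|\leq\|u\|_{C^{1,\alpha}(B_{1/2}(x^{'},x_{N}))}\leq C\bigl(\|u\|_{L^{\infty}(B_{1})}+\|f(u)\|_{L^{\infty}(B_{1})}\bigr)\leq C(1+L)\varepsilon,
\]
where the constant $C$ is independent of the center thanks to the translation invariance of the Laplacian. Since $\varepsilon>0$ is arbitrary, this gives exactly the required uniform convergence of $\nabla u$ to $0$ as $|x^{'}|\to\infty$.

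The only step that requires explicit attention is the independence of the elliptic constant $C$ with respect to the center of the ball; this is the mechanism which upgrades the pointwise smallness of $u$ to a uniform smallness of $|\nabla u|$, and although standard it is the heart of the argument. I remark that the hypothesis $f^{'}(0)=0$ is not actually used in this proof: only $f(0)=0$ together with $f\in C^{1}$ enters through the linear bound on $f$ near the origin. This is coherent with the role of the lemma, since when $f^{'}(0)<0$ the conclusion follows at once from the exponential bounds recalled in Remark 10, so the interesting regime is exactly $f^{'}(0)=0$. An equivalent blow-up style argument would take $u^{k}(x)=u(x+x^{k})$ along a putative bad sequence $|(x^{'})^{k}|\to\infty$, use standard compactness to extract a $C^{1}_{\rm loc}$ limit, and observe that this limit must vanish identically by (\ref{limx'}), contradicting $|\nabla u^{k}(0)|\geq\varepsilon_{0}$.
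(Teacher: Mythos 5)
Your direct argument is correct and takes a genuinely different route from the paper. The paper argues by contradiction and compactness: it picks a bad sequence $x^{k}=((x')^{k},x_{N}^{k})$ with $|\nabla u(x^{k})|\geq\delta$ and $|(x')^{k}|\to\infty$, translates $u^{k}(x)=u(x+x^{k})$, extracts a $C^{2,\alpha}_{loc}$ limit $v$ (a nonnegative solution), observes $v(0)=0$, invokes the strong maximum principle to conclude $v\equiv 0$, and derives a contradiction from $|\nabla v(0)|\geq\delta$. Your version replaces the compactness step by a direct application of interior $W^{2,p}$ estimates plus Sobolev embedding on unit balls $B_{1}(x',x_{N})$ with $|x'|$ large, where $\|u\|_{L^{\infty}(B_{1})}$ and hence $\|f(u)\|_{L^{\infty}(B_{1})}$ are small because $u<\varepsilon$ and $f$ is Lipschitz with $f(0)=0$. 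The translation invariance of the Laplacian makes the elliptic constant uniform in the center, which is exactly the point you flagged. This gives a quantitative, constructive bound $|\nabla u|\leq C(1+L)\varepsilon$ rather than a soft contradiction, at no extra cost in hypotheses.

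Two small remarks. First, you are right that $f'(0)=0$ is not used: only $f\in C^{1}$ and $f(0)=0$ enter, and the same is true of the paper's proof. Second, in your closing sketch of the compactness variant you say the limit ``must vanish identically by (\ref{limx'})'': this is in fact cleaner than what the paper does. Since (\ref{limx'}) is a \emph{uniform} decay in $x'$ and $|(x')^{k}|\to\infty$, the translates $u^{k}$ converge to $0$ uniformly on compacta directly, so $v\equiv 0$ follows without the detour through the strong maximum principle that the paper uses; the paper's appeal to it is correct but unnecessary.
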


%We point out that, if $f^{'}(0)<0$, then, by the elliptic estimates, both $u$ and the gradient decay exponentially in $x^{'}$, so the hypothesis $f^{'}(0)=0$ is not restrictive.

\begin{proof} Assume, by contradiction, that it is possible to find a $\delta>0$ and a sequence $|(x^{'})^{k}|\to\infty$ such that 
\begin{eqnarray}\notag
\sup_{x_{N}}|\nabla u((x^{'})^{k},x_{N})|\geq 2\delta.
\end{eqnarray}
So we can take a sequence $x_{N}^{k}\in\R$ such that $|\nabla u((x^{'})^{k},x_{N}^{k})|\geq\delta$ and define $u^{k}(x)=u(x+x^{k})$. Up to a subsequence, $u^{k}\to v$ in $C^{2,\alpha}_{loc}(\R^{N})$, and $v\geq 0$ is still a solution to equation (\ref{eqp}). Now we observe that, on the one hand
\begin{eqnarray}\notag
u^{k}(0)=u(x^{k})\to 0=v(0),
\end{eqnarray}
hence, by the strong maximum principle, $v\equiv 0$. On the other hand,
\begin{eqnarray}\notag
\delta\leq|\nabla u((x^{'})^{k},x_{N}^{k})|=|\nabla u^{k}(0)|\to|\nabla v(0)|=0,
\end{eqnarray}
a contradiction.
\end{proof}

\begin{lemma}
Let us denote $x=(y,z)\in\R^{M}\times\R^{N-M}$. Assume that $N-M\geq 3$.
Let $u>0$ be a bounded $C^{2}(\R^{N})$ function such that $-\Delta u\leq 0$ for $|z|\geq r$, for some $r>0$. Assume furthermore that 
\begin{eqnarray}
u(y,z)\to 0 &\text{as $|z|\to\infty$, uniformly in $y$}
\label{decz}
\end{eqnarray}
Then
\begin{eqnarray}
u(x)\leq C|z|^{2-(N-M)} &\text{for $|z|\geq R$}
\label{condM-N}
\end{eqnarray}
for a suitable constant $R>0$.
\label{lemmadecarm}
\end{lemma}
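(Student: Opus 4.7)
My plan is to compare $u$ against an explicit harmonic supersolution depending only on $z$, and then apply a maximum principle on the unbounded cylinder $\Omega:=\{|z|>r\}$. The candidate barrier is $w(z):=\|u\|_\infty\, r^{N-M-2}\,|z|^{2-(N-M)}$. Since $N-M\geq 3$ and $|z|^{2-(N-M)}$ is (up to a constant) the fundamental solution of the Laplacian on $\R^{N-M}\setminus\{0\}$, this function is harmonic on $\{z\neq 0\}$ when viewed on $\R^N$. By construction $w\equiv\|u\|_\infty$ on $\partial\Omega=\{|z|=r\}$, and $w(z)\to 0$ as $|z|\to\infty$. Set $v:=u-w$ on $\Omega$. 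Then $-\Delta v=-\Delta u\leq 0$, so $v$ is subharmonic; $v\leq 0$ on $\partial\Omega$ (since $u\leq\|u\|_\infty=w$ there); $v$ is bounded above by $\|u\|_\infty$; and $v(y,z)\to 0$ as $|z|\to\infty$ uniformly in $y$, since both $u$ and $w$ do.

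The heart of the proof is to deduce $v\leq 0$ on $\Omega$, which immediately gives $u(y,z)\leq \|u\|_\infty r^{N-M-2}|z|^{2-(N-M)}$ for $|z|\geq r$, and hence the conclusion. I would argue by contradiction: assume $S:=\sup_\Omega v>0$. The uniform decay in $|z|$ forces the supremum to be approached only on some bounded slab $\{r\leq|z|\leq R_0\}$, so one picks $(y^k,z^k)$ in this slab with $v(y^k,z^k)\to S$. If $\{y^k\}$ has a bounded subsequence, extract $(y^k,z^k)\to(y^*,z^*)$; when $|z^*|>r$ the strong maximum principle applied to the continuous subharmonic function $v$ on the connected open set $\{|z|>r\}$ (connected because $N-M\geq 3\geq 2$) forces $v\equiv S$ on $\Omega$, contradicting the uniform decay in $z$, while $|z^*|=r$ directly contradicts the boundary bound $v\leq 0$.

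If instead $|y^k|\to\infty$, I would translate: set $u_k(y,z):=u(y+y^k,z)$ and $v_k:=u_k-w$. The family $\{u_k\}$ is uniformly bounded, each $v_k$ is subharmonic on $\Omega$, and the boundary and decay properties are preserved because the whole construction is invariant under translation in $y$. Extracting a subsequence convergent in $C^0_{loc}$ to some $u_\infty$ (sharing the same qualitative properties), and then applying the previous Case-1 argument to $v_\infty:=u_\infty-w$ at the point $(0,z^*)$ with $r\leq|z^*|\leq R_0$, produces the same contradiction. The main obstacle lies precisely in this final $|y^k|\to\infty$ step, which requires enough regularity to extract a $C^0_{loc}$-convergent subsequence of translates. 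This is not immediate from pure subharmonicity of a merely $C^2$ function; however in every intended application of this lemma $u$ solves a semilinear PDE $-\Delta u=f(u)$, so $\Delta u\in L^\infty_{loc}$ and the standard $W^{2,p}$-Schauder machinery, together with the Sobolev embedding, provides $C^{1,\alpha}_{loc}$ compactness of the translated family, enough to close the argument.
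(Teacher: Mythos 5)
Your barrier is essentially the same as the paper's (the paper uses $v(y,z)=|z|^{2-(N-M)}$ and the family $\sigma+\lambda v$), but the route from barrier to conclusion is genuinely different, and yours has the gap you yourself flagged. The paper works on the truncated slab $C=\{R<|z|<A\}$, which is bounded in the $z$-direction: there $u<\sigma+\lambda v$ is checked \emph{directly on both boundary spheres} $\{|z|=R\}$ and $\{|z|=A\}$ (using $u<\varepsilon$ near $|z|=R$, $u<\sigma$ near $|z|=A$, and $\lambda>\varepsilon R^{N-M-2}$), and then a single application of the Berestycki--Caffarelli--Nirenberg maximum principle for unbounded domains handles the $y$-unboundedness of $C$ at a stroke; afterwards $A\to\infty$ and $\sigma\to 0$ close the argument. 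This needs only boundedness of the subharmonic difference $u-\sigma-\lambda v$ in $C$, nothing more, so it works for any bounded $C^2$ function with $\Delta u\geq 0$ on $\{|z|\geq r\}$, exactly the hypotheses of the lemma.

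By contrast, you work on the semi-infinite set $\Omega=\{|z|>r\}$ and try to reconstruct the maximum principle by hand: take a maximizing sequence for $v=u-w$ and either extract a limit point (Case 1, fine) or translate in $y$ and extract a locally convergent limit (Case 2). The Case 2 extraction is the genuine gap for the lemma \emph{as stated}: the hypotheses give $\Delta u\geq 0$ but no upper bound on $\Delta u$, so you cannot invoke interior $W^{2,p}$/Schauder estimates to get precompactness of the translates $u(\cdot+y^k,\cdot)$ in $C^0_{\mathrm{loc}}$. You are right that in every application $u$ solves $-\Delta u=f(u)$ with $u$ bounded and $f$ continuous, which makes $\Delta u$ uniformly bounded and closes your argument; but the paper's lemma is stated, and proved, in the purely potential-theoretic generality. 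If you prefer a self-contained argument avoiding translates, the paper's $\sigma$-trick on the two-sided slab $\{R<|z|<A\}$ is the clean way to do it, and the BCN lemma is precisely the tool designed for the remaining unboundedness in $y$.
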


\begin{proof}
We will give an estimate of $u$ by dominating it with a barrier. In this construction we use the function 
$v(y,z)=|z|^{2-(N-M)}$, because we know that $v>0$ and $\Delta v=0$ on $\R^{N}$, for $N-M\geq 3$.
We observe that, for any $\sigma>0$ and $\lambda\in\R$,
\begin{eqnarray}\notag
-\Delta\big(u-(\sigma+\lambda v)\big)\leq 0 &\text{for $|z|\geq r$}
\end{eqnarray}
%By the (\ref{decz}), for any $\sigma>0$, it is possible to find $R=R(\sigma)$ such that $u<\sigma$ for $|z|\geq R$, hence, for any $\sigma>0$ and $\lambda>0$, we have
By the decay in $|z|$, we deduce that, for any $\varepsilon>0$, we can find $\rho=\rho(\varepsilon)>0$ such that $u(y,z)<\varepsilon$ for $|z|\geq\rho$. Now we set $R=\max\{\rho,r\}$. We fix $0<\sigma<\varepsilon$, $x_{0}=(y_{0},z_{0})$ such that $|z_{0}|>R$ and we take $A>|z_{0}|$ so large that $u<\sigma$ for $|z|\geq A$. Hence we have  
\begin{eqnarray}\notag
\begin{cases}
u<\sigma<\sigma+\lambda R^{2-(N-M)} &\text{for $|z|=A$}\\\notag
u<\varepsilon<\lambda R^{2-(N-M)}<\sigma+\lambda R^{2-(N-M)} &\text{for $|z|=R$}
\end{cases}
\end{eqnarray}
if we choose $\lambda>\varepsilon R^{N-M-2}$. Therefore, by the maximum principle for possibly unbounded domains (see \cite{BCN}, Lemma $2,1$) applied to the region 
$C=\{x\in\R^{N}:R<|z|<A\}$, we get $u\leq\sigma+\lambda v$ on $C$, in particular $u(x_{0})\leq\sigma+\lambda v(x_{0})$. Letting $\sigma\to 0$, we have the statement.
\end{proof}
%In particular, this lemma yields that, in dimension $N\geq 3$, any function that is subharmonic outside a ball and decays at infinity can be dominated by the fundamental solution of the Laplacian.

\begin{corollary}
Let $u>0$ be a bounded solution to (\ref{eqp}), with $f$ satisfying (\ref{condf}). 

$(i)$ If $N\geq 4$ and $u$ satisfies (\ref{limx'}), then 
\begin{eqnarray}
u(x^{'},x_{N})\leq C|x^{'}|^{3-N} &\text{for $|x^{'}|\geq M$}
\label{cond3-N}
\end{eqnarray} 
for a suitable constant $M>0$.

$(ii)$ If $N\geq 3$ and $u(x)\to 0$ as $|x|\to\infty$, then
\begin{eqnarray}
u(x)\leq C|x|^{2-N} &\text{for $|x|\geq M$}
\label{decarm}
\end{eqnarray}
\label{corarm}
\end{corollary}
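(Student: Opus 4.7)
The plan is to deduce both statements as direct applications of Lemma \ref{lemmadecarm}, the only work being to verify the hypothesis $-\Delta u \leq 0$ on a suitable exterior region and to identify the splitting $x=(y,z)$ correctly in each case.

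The common ingredient is the sign of $f$ near the origin. By (\ref{condf}), $f(0)=0$ and $f^{'}(s)\leq 0$ on $(0,\varepsilon)$, so $f(s)\leq 0$ for every $s\in[0,\varepsilon]$. Consequently, wherever the solution is smaller than $\varepsilon$, the equation $-\Delta u=f(u)$ gives $-\Delta u \leq 0$, i.e. $u$ is subharmonic there. The task in each case is therefore to locate a region of the form $\{|z|\geq R\}$ on which $u<\varepsilon$, and then to invoke Lemma \ref{lemmadecarm}.

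For statement $(i)$, I would set $y=x_{N}\in\R$ and $z=x^{'}\in\R^{N-1}$, which corresponds to $M=1$ and $N-M=N-1$; the dimensional hypothesis $N-M\geq 3$ of Lemma \ref{lemmadecarm} is exactly $N\geq 4$. Assumption (\ref{limx'}) says $u(x^{'},x_{N})\to 0$ as $|x^{'}|\to\infty$ uniformly in $x_{N}$, so there exists $r>0$ with $u<\varepsilon$ on $\{|x^{'}|\geq r\}$, hence $-\Delta u=f(u)\leq 0$ there. Applying Lemma \ref{lemmadecarm} yields $u(x)\leq C|x^{'}|^{2-(N-1)}=C|x^{'}|^{3-N}$ for $|x^{'}|\geq M$.

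For statement $(ii)$, I would take $M=0$, so that $z=x\in\R^{N}$ and the condition $N-M\geq 3$ becomes $N\geq 3$. The hypothesis $u(x)\to 0$ as $|x|\to\infty$ immediately gives $u<\varepsilon$ on some exterior $\{|x|\geq r\}$, and again $-\Delta u=f(u)\leq 0$ there. A second application of Lemma \ref{lemmadecarm}, now with the trivial $y$-variable, produces $u(x)\leq C|x|^{2-N}$ for $|x|\geq M$. There is no real obstacle in this corollary — the substantive work has already been done in Lemma \ref{lemmadecarm} by comparison with the harmonic function $|z|^{2-(N-M)}$; the only thing to be careful about is choosing the correct splitting $x=(y,z)$ so that the dimensional assumption of Lemma \ref{lemmadecarm} matches the dimension hypothesis stated in each part of the corollary.
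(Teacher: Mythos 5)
Your proposal is correct and follows exactly the paper's approach: the paper's proof is simply "apply Lemma \ref{lemmadecarm} with $M=1$ in case $(i)$ and $M=0$ in case $(ii)$," and your write-up fills in the verification that $f(s)\leq 0$ near the origin (by \eqref{condf}) gives the required subharmonicity $-\Delta u\leq 0$ on the relevant exterior region, plus the correct identification of the splitting $x=(y,z)$ in each case.
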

\begin{proof}
It is enough to apply Lemma \ref{lemmadecarm} with $M=1$ in case $(i)$ and with $M=0$ in case $(ii)$.
\end{proof}

\begin{lemma}
Let $u>0$ be a bounded solution to (\ref{eqp}), with $f\in C^{2}(\R)$ satisfying (\ref{condf}). 

$(i)$ If $N\geq 5$ and $u$ satisfies (\ref{limx'}), then
\begin{eqnarray}
|\nabla u(x^{'},x_{N})|\leq C|x^{'}|^{2-N} &\text{for $|x^{'}|\geq M$}
\label{cond2-N}
\end{eqnarray}
for a suitable constant $M>0$.

$(ii)$ If $N\geq 4$ and $u(x)\to 0$ as $|x|\to\infty$, then
\begin{eqnarray}
|\nabla u(x)|\leq C|x|^{1-N} &\text{for $|x|\geq M$}
\label{decarmgrad}
\end{eqnarray}
for a suitable constant $M>0$.
\label{lemmagradarm}
\end{lemma}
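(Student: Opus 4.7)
The approach is the same rescaling/blow-up argument employed in Lemma \ref{lemmaestgrad}, now combined with the pointwise bounds on $u$ provided by Corollary \ref{corarm} and the quadratic control $|f(t)|\leq Ct^{2}$ near the origin, which follows from $f\in C^{2}(\R)$ together with $f(0)=0$ and $f'(0)=0$.

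For part $(i)$, I would argue by contradiction: suppose there exists a sequence $x^{k}=((x')^{k},x_{N}^{k})$ with $R_{k}:=|(x')^{k}|\to\infty$ and $|\nabla u(x^{k})|\geq k R_{k}^{2-N}$. The natural rescaling is
\begin{eqnarray}\notag
v^{k}(x',x_{N}):=R_{k}^{N-3}\,u\bigl(R_{k}x'+(x')^{k},\,R_{k}x_{N}+x_{N}^{k}\bigr),
\end{eqnarray}
viewed on $\Omega=\{|x'|\leq 1/2,\ |x_{N}|\leq 1\}$. For $k$ large, the spatial argument of $u$ has norm at least $R_{k}/2\geq M$, so Corollary \ref{corarm}$(i)$ gives $|v^{k}|\leq C$ on $\Omega$. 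The rescaled equation is $-\Delta v^{k}=R_{k}^{N-1}f(u(\cdot))$, and combining $|f(u)|\leq Cu^{2}$ with $u\leq CR_{k}^{3-N}$ yields
\begin{eqnarray}\notag
|\Delta v^{k}|\leq CR_{k}^{N-1}\cdot R_{k}^{2(3-N)}=CR_{k}^{5-N},
\end{eqnarray}
which stays bounded precisely when $N\geq 5$. Standard interior $W^{2,p}$ estimates together with the Sobolev embedding $W^{2,p}(B)\hookrightarrow C^{1,\alpha}(B)$ for $p>N$ then produce a uniform bound on $|\nabla v^{k}|$ on every ball compactly contained in $\Omega$, in particular at the origin. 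But a direct computation gives
\begin{eqnarray}\notag
|\nabla v^{k}(0,0)|=R_{k}^{N-2}\,|\nabla u(x^{k})|\geq k,
\end{eqnarray}
which is the desired contradiction.

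Part $(ii)$ follows the same template with a global rescaling: starting from a putative sequence with $R_{k}:=|x^{k}|\to\infty$ and $|\nabla u(x^{k})|\geq k|x^{k}|^{1-N}$, one sets
\begin{eqnarray}\notag
w^{k}(x):=R_{k}^{N-2}u(R_{k}x+x^{k}),\qquad \Omega=\{|x|\leq 1/2\}.
\end{eqnarray}
Corollary \ref{corarm}$(ii)$ bounds $|w^{k}|$ on $\Omega$, while the quadratic control of $f$ and $u\leq CR_{k}^{2-N}$ give $|\Delta w^{k}|\leq CR_{k}^{4-N}$, bounded exactly when $N\geq 4$. The same elliptic regularity argument then contradicts $|\nabla w^{k}(0)|=R_{k}^{N-1}|\nabla u(x^{k})|\geq k$.

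No step should present a genuine obstacle; the only things to verify are that for $x\in\Omega$ the shifted argument of $u$ lies inside the region where Corollary \ref{corarm} applies, and that $u$ there is small enough for $|f(u)|\leq Cu^{2}$ to be valid. Both are immediate from $R_{k}\to\infty$. The sharp dimensional thresholds $N\geq 5$ in $(i)$ and $N\geq 4$ in $(ii)$ emerge precisely as the values for which the rescaled Laplacian stays bounded, so they cannot be relaxed by this technique without additional input on the decay of $u$.
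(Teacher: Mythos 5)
Your proof is correct and follows essentially the same route as the paper: there, Lemma \ref{lemmagradarm} is proved in one line by invoking Lemma \ref{lemmaestgrad}(ii) with $\alpha=N-3$ in case $(i)$ and $\alpha=N-2$ in case $(ii)$, combined with the decay furnished by Corollary \ref{corarm}, and Lemma \ref{lemmaestgrad} is itself established by precisely the blow-up/rescaling computation you carry out directly. The only remark worth making is that Lemma \ref{lemmaestgrad} is stated for decay in the scalar variable $x_{N}$ whereas case $(i)$ here concerns decay in $x^{'}$, so the paper's one-line citation tacitly swaps the roles of $x^{'}$ and $x_{N}$; your inlined version makes that swap explicit, which is arguably cleaner.
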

\begin{proof}
It is enough to apply statement $(ii)$ of Lemma \ref{lemmaestgrad} with $\alpha=N-3$ in case $(i)$ and $\alpha=N-2$ in case $(ii)$.
\end{proof}

\begin{lemma}
Let $u>0$ be a bounded solution to the (\ref{eqp}), with $f\in C^{2}(\R)$ satisfying the (\ref{condf}). Assume furthermore that (\ref{limx'}) holds.

$(i)$ Let $N\geq 5$. Then $\mathcal{H}(u)$ is well defined and finite.

$(ii)$ If $2\leq N\leq 4$, the same is true under condition (\ref{decN}).
\label{lemmadefH}
\end{lemma}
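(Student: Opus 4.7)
The plan is to split the integral defining $\mathcal{H}(u,x_N)$ into a bounded piece over $\{|x'|\leq M\}$ and a tail over $\{|x'|\geq M\}$. On the bounded piece the integrand is in $L^\infty$: $u$ is bounded by hypothesis, $\nabla u\in L^\infty$ by standard elliptic regularity applied to $-\Delta u=f(u)$, and $F$ is continuous. So the only issue is integrability at infinity in $x'$.

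For part $(i)$ with $N\geq 5$, I would first observe that condition (\ref{condf}) forces $f'(0)\leq 0$. If $f'(0)<0$, Remark $10$ gives exponential decay of $u$ and $|\nabla u|$ in $|x'|$, and finiteness is immediate in every dimension; so the interesting subcase is $f'(0)=0$. In that subcase I invoke Corollary \ref{corarm}$(i)$ to get $u(x',x_N)\leq C|x'|^{3-N}$, and Lemma \ref{lemmagradarm}$(i)$ to get $|\nabla u(x',x_N)|\leq C|x'|^{2-N}$, for $|x'|\geq M$. Since $f\in C^2$ with $f(0)=f'(0)=0$, Taylor's theorem yields $|f(u)|\leq Cu^2$ for $u$ in the bounded range of the solution, hence $|F(u)|\leq Cu^3$. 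The tail integrand is then controlled by
\[
\tfrac{1}{2}\bigl||\nabla_{x'}u|^{2}-u_{N}^{2}\bigr|+|F(u)|\leq C|x'|^{4-2N}+C|x'|^{9-3N}.
\]
Integrating in $x'\in\R^{N-1}$ requires each exponent to be strictly less than $-(N-1)$: the first gives $N>3$, the second gives $N>4$. Both hold for $N\geq 5$, so the tail integral is finite.

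For part $(ii)$ with $2\leq N\leq 4$, assumption (\ref{decN}) gives $u,|\nabla u|\leq C|x'|^{-(N-1+\sigma)/2}$ on the tail. Squaring the gradient estimate yields $|\nabla u|^2\leq C|x'|^{-(N-1+\sigma)}$, whose integral over $\R^{N-1}$ at infinity converges because $\sigma>0$. For the potential term, if $f'(0)<0$ exponential decay again closes the argument, and if $f'(0)=0$ one has $|F(u)|\leq Cu^3\leq C|x'|^{-3(N-1+\sigma)/2}$, whose exponent is again strictly smaller than $-(N-1)$, so integrable.

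The main obstacle is not a deep conceptual point but the bookkeeping of decay exponents: it is essential to use $f\in C^2$ together with $f'(0)=0$ to upgrade the naive estimate $|F(u)|=O(u^2)$ to $|F(u)|=O(u^3)$. Without this improvement, the critical exponent computation in part $(i)$ would only yield finiteness for $N\geq 6$; the cubic bound on $F$ is precisely what makes $N=5$ admissible, and it is what forces the $C^2$ hypothesis on $f$.
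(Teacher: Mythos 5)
Your proof is correct and follows essentially the same route as the paper's: reduce to the case $f'(0)=0$ via the exponential decay, invoke Corollary~\ref{corarm}$(i)$ and Lemma~\ref{lemmagradarm}$(i)$ for the algebraic decay of $u$ and $\nabla u$ in $|x'|$, use $f\in C^2$ with $f(0)=f'(0)=0$ to get $|F(u)|\leq Cu^3$, and then check the exponents against integrability over $\R^{N-1}$. Your closing remark that the cubic bound on $F$ is exactly what makes $N=5$ reachable (a quadratic bound would only give $N\geq 6$) is a correct and useful observation the paper leaves implicit.
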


\begin{proof}
As above, we can assume that $f^{'}(0)=0$, otherwise the result follows from the exponential decay.

$(i)$ Applying Lemma \ref{lemmagradarm}, we get that
\begin{eqnarray}\notag
\int_{|x^{'}|\geq M}u_{j}^{2}dx^{'}\leq C\int_{M}^{\infty}r^{2(2-N)}r^{N-2}dr
\end{eqnarray}
that is finite because $N\geq 5$. 

By the assumption $f^{'}(0)=0$ and $f\in C^{2}$, we get that $F(u)/u^{3}$ is bounded in a neighbourhood of the origin. If $N\geq 5$, this yields that
\begin{eqnarray}\notag
\Big|\int_{|x^{'}|\geq R}F(u)dx^{'}\Big|\leq C\int_{R}^{\infty}r^{3(3-N)}r^{N-2}dr<\infty
\end{eqnarray}

$(ii)$ If $2\leq N\leq 4$, condition (\ref{decN}) yields that
\begin{eqnarray}\notag
\int_{|x^{'}|\geq M}u_{j}^{2}dx^{'}\leq C\int_{M}^{\infty}r^{-(N-1+\sigma)}r^{N-2}dr<\infty
\end{eqnarray}
and
\begin{eqnarray}\notag
\Big|\int_{|x^{'}|\geq R}F(u)dx^{'}\Big|\leq C\int_{R}^{\infty}r^{-3\frac{N-1+\sigma}{2}}r^{N-2}dr<\infty
\end{eqnarray}

\end{proof}

Let $N\geq 4$. For a solution $u>0$ to (\ref{eqp}) such that $u(x)\to 0$ as $|x|\to\infty$, we define
\begin{eqnarray}\notag
J(u)=\int_{\R^{N}}\frac{1}{2}|\nabla u|^{2}-F(u)dx.
\end{eqnarray}

%We observe that, in any dimension, conditions
%\begin{eqnarray}
%|u(x)|,|\nabla u(x)|\leq |x|^{-\frac{N+\sigma}{2}}
%\label{cond1,2}
%\end{eqnarray}
%are sufficient to guarantee that $u\in H^{1}(\R^{N})$.

We point out that, if $f^{'}(0)<0$, any positive solution decaying to $0$ decays exponentially, so  the restirction on the dimension is not necessary, we can define $J(u)$ for any $N\geq 1$.

Anyway by Corollary \ref{corarm} and Lemma \ref{lemmagradarm}, in dimension $N\geq 4$, even if $f^{'}(0)=0$, the fact that $u\to 0$ as $|x|\to\infty$ is sufficient to guarantee that $J(u)$ is well defined and finite. In fact
\begin{eqnarray}\notag
\int_{\R^{N}}|\nabla u|^{2}\leq C\int_{0}^{\infty}r^{2(1-N)}r^{N-1}dr<\infty
\label{Jgrad}
\end{eqnarray} 
and
\begin{eqnarray}\notag
\bigg|\int_{\R^{N}}F(u)\bigg|\leq C\int_{0}^{\infty}r^{3(2-N)}r^{N-1}dr<\infty.
\label{JFu}
\end{eqnarray}

In dimension $1\leq N\leq 3$, the decay to $0$ is not sufficient to define $J(u)$, at least if $f^{'}(0)=0$. In order to do so, we have to assume some further conditions about the decay of $u$, for instance
\begin{eqnarray}
u(x),|\nabla u(x)|\leq C|x|^{-\frac{N+\sigma}{2}} &\text{for $|x|\geq M$}
\label{condH1}
\end{eqnarray}
for appropriate constants $M>0$, $\sigma>0$.

In next lemma, we will compute explicitly $J(u)$, and we will see that $J(u)>0$.

\begin{lemma}
Let $u>0$ be a solution to the problem
\begin{eqnarray}
\begin{cases}
-\Delta u=f(u) &\text{in $\R^{N}$}\\\notag
u>0\\\notag
u(x)\to 0 &\text{as $|x|\to\infty$}
\end{cases}
\label{probrad}
\end{eqnarray}
with $f\in C^{2}(\R)$ satisfying (\ref{condf}).

$(i)$ If $N\geq 4$, then 
\begin{eqnarray}
J(u)=\frac{1}{N}\int_{\R^{N}}|\nabla u|^{2}>0
\label{explicitJ}
\end{eqnarray}

$(ii)$ If $1\leq N\leq 3$, the same formula holds if $u$ fulfills condition (\ref{condH1}) and $f\in C^{1}$.
\label{lemmadec}
\end{lemma}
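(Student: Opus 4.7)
The plan is to derive the classical Pohozaev identity on a ball $B_R$ and let $R\to\infty$, using the decay estimates of Corollary \ref{corarm} and Lemma \ref{lemmagradarm} (in case (i)), or assumption (\ref{condH1}) (in case (ii)), to kill the boundary terms.

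First I would multiply the equation $-\Delta u=f(u)$ by the dilation field $x\cdot\nabla u$, integrate over $B_R$, and use the standard manipulations
\begin{equation*}
\nabla u\cdot\nabla(x\cdot\nabla u)=|\nabla u|^{2}+x\cdot\nabla\!\Big(\tfrac{1}{2}|\nabla u|^{2}\Big),\qquad f(u)\,x\cdot\nabla u=x\cdot\nabla F(u),
\end{equation*}
together with two applications of the divergence theorem. On $\partial B_{R}$ the outward unit normal is $x/R$, so $x\cdot\nabla u=R\,\partial_{\nu}u$ there, and one obtains
\begin{equation*}
\frac{N-2}{2}\int_{B_{R}}|\nabla u|^{2}dx-N\int_{B_{R}}F(u)\,dx=R\int_{\partial B_{R}}\!\Big(|\partial_{\nu}u|^{2}-\tfrac{1}{2}|\nabla u|^{2}-F(u)\Big)dS.
\end{equation*}
Combining this with $\tfrac{1}{2}\int_{\R^{N}}|\nabla u|^{2}-\int_{\R^{N}}F(u)=J(u)$, the identity $J(u)=\tfrac{1}{N}\int|\nabla u|^{2}$ is equivalent to showing that the right-hand side above vanishes (for a suitable sequence $R_{k}\to\infty$), since then $\int F(u)=\tfrac{N-2}{2N}\int|\nabla u|^{2}$ and a direct algebraic check gives the formula.

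Next I would verify the decay of the boundary integrand. In case (i), if $f'(0)<0$ the exponential decay from Remark 10 makes every boundary term go to $0$ trivially; if $f'(0)=0$, then $f\in C^{2}$ with $f(0)=f'(0)=0$ gives $|F(t)|\le C|t|^{3}$ near the origin, while Corollary \ref{corarm}(ii) and Lemma \ref{lemmagradarm}(ii) yield $u\lesssim|x|^{2-N}$ and $|\nabla u|\lesssim|x|^{1-N}$. Since $|\partial B_{R}|\sim R^{N-1}$, this bounds the boundary terms by
\begin{equation*}
R\cdot R^{N-1}\bigl(|\nabla u|^{2}+|F(u)|\bigr)\big|_{|x|=R}\lesssim R^{2-N}+R^{6-2N},
\end{equation*}
which tends to $0$ exactly when $N\ge 4$. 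In case (ii), the hypothesis $u,|\nabla u|\lesssim|x|^{-(N+\sigma)/2}$ together with $f\in C^{1}$ (so $|F(t)|\lesssim|t|^{2}$) yields
\begin{equation*}
R^{N}\bigl(|\nabla u|^{2}+|F(u)|\bigr)\big|_{|x|=R}\lesssim R^{-\sigma}\longrightarrow 0.
\end{equation*}
Strictly speaking one has to pick a sequence $R_{k}\to\infty$ along which the pointwise bound transfers to the spherical average; this is the standard trick: since $\int|\nabla u|^{2}$ and $\int|F(u)|$ are finite (Lemma \ref{lemmadefH} and the integrability computations preceding the statement), by Fubini a subsequence $R_{k}$ along which $R_{k}\int_{\partial B_{R_{k}}}(|\nabla u|^{2}+|F(u)|)\to 0$ must exist.

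Finally, having established $\tfrac{N-2}{2}\int|\nabla u|^{2}=N\int F(u)$, the formula $J(u)=\tfrac{1}{N}\int|\nabla u|^{2}$ follows, and the strict positivity is immediate because $u>0$ is bounded, decays at infinity, and is therefore nonconstant, so $\int|\nabla u|^{2}>0$. I expect the main obstacle to be the boundary control: the pointwise decay bounds do not immediately give spherical-integral decay, so one needs either the Fubini subsequence argument above or (alternatively) the $C^{1}$ or $C^{2}$ upgrades of the decay for $\nabla u$ already packaged into Lemma \ref{lemmagradarm}. Once this is handled, everything else is a direct algebraic rearrangement.
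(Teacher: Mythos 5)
Your proof is correct and follows the same underlying Pohozaev strategy as the paper, but with one structural difference worth noting. The paper first invokes Gidas--Ni--Nirenberg to conclude that $u$ is radially symmetric, then multiplies the radial ODE $-v''-\frac{N-1}{r}v'=f(v)$ by $v'r^{N}$ and integrates by parts --- which is precisely the radial incarnation of your identity. You instead pair $-\Delta u=f(u)$ against the dilation field $x\cdot\nabla u$ over balls $B_{R}$ and derive the $\R^{N}$ Pohozaev identity directly. This bypasses the appeal to radial symmetry entirely and treats all dimensions $N\geq 1$ in a uniform way, whereas the paper handles $N=1$ as a separate case via Lemma~\ref{lemmaODE}(ii). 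In both proofs the substantive analytic input is identical: the decay estimates $u\lesssim|x|^{2-N}$ and $|\nabla u|\lesssim|x|^{1-N}$ from Corollary~\ref{corarm} and Lemma~\ref{lemmagradarm} (together with $|F(t)|\lesssim|t|^{3}$ from $f\in C^{2}$, $f(0)=f'(0)=0$) to control the boundary terms when $N\geq 4$, and condition~(\ref{condH1}) when $1\leq N\leq 3$. One small simplification you could make: because these decay bounds are pointwise --- valid for all $|x|\geq M$ --- your estimate $R\int_{\partial B_{R}}(\ldots)\lesssim R^{2-N}+R^{6-2N}$ already holds for every $R$, so the boundary term vanishes along the full limit $R\to\infty$; the Fubini subsequence extraction you added as a safety net is not needed here. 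The algebraic rearrangement and the strict positivity argument (finite Dirichlet energy plus $u$ nonconstant) are both correct.
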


\noindent\textbf{Remark 11.} If $f\in C^{1}(\R)$ with $f^{'}(0)<0$, thanks to the exponential decay (\ref{expdecayx'}), formula (\ref{explicitJ}) holds true in any dimension $N\geq 1$. 

\begin{proof}
If $N=1$, condition (\ref{condH1}) guarantees that $J(u)$ is well defined and finite. By statement $(ii)$ of Lemma \ref{lemmaODE}, $\frac{1}{2}(u^{'})^{2}+F(u)=0$,
therefore $J(u)=\int_{-\infty}^{\infty}(u^{'})^{2}>0$, unless $u\equiv 0$.

Now we observe that, in any dimension $N\geq 2$ and for any nonlinearity $f$ fulfilling the (\ref{condf}), any solution to (\ref{probrad}) is radially symmetric, that is, up to a translation, $u(x)=v(|x|)$, where $v$ satisfies that ODE
\begin{eqnarray}\notag
-v^{''}-\frac{N-1}{r}v^{'}=f(v)
\end{eqnarray} 
We multilpy the ODE by $v^{'}r^{N}$ and integrate to obtain
\begin{eqnarray}\notag
-\int_{0}^{\infty}v^{''}v^{'}r^{N}dr-(N-1)\int_{0}^{\infty}(v^{'})^{2}r^{N-1}dr=
\int_{0}^{\infty}f(v)v^{'}r^{N}dr
\end{eqnarray}
Integrating by parts we get
\begin{eqnarray}\notag
\int_{0}^{\infty}f(v)v^{'}r^{N}dr=\big[F(v)r^{N}\big]_{0}^{\infty}-
N\int_{0}^{\infty}F(v)r^{N-1}dr
\end{eqnarray}
and
\begin{eqnarray}\notag
2\int_{0}^{\infty}v^{''}v^{'}r^{N}dr=\big[(v^{'})^{2}r^{N}\big]_{0}^{\infty}-
N\int_{0}^{\infty}(v^{'})^{2}r^{N-1}dr
\end{eqnarray}
If $f^{'}(0)<0$, thanks to the exponential decay, all integrals are well defined and finite and all boundary terms vanish. Finally, we get
\begin{eqnarray}
\frac{N-2}{2N}\int_{0}^{\infty}(v^{'})^{2}r^{N-1}dr=
\int_{0}^{\infty}F(v)r^{N-1}dr
\end{eqnarray}
If $N=2$, we already see that $\int_{0}^{\infty}F(v)r^{N-1}dr=0$, hence $J(u)=\frac{1}{2}\int_{0}^{\infty}(v^{'})^{2}rdr>0$. In higher dimension, a computation show that $J(u)=\frac{1}{N}\int_{0}^{\infty}(v^{'})^{2}r^{N-1}dr>0$.

If $f^{'}(0)=0$, we have no exponential decay, so it is harder to verify that all the integrals are well defined and finite and that the boundary terms vanish. In order to do so, in dimension $1\leq N\leq 3$ we use condition (\ref{condH1}), while in higher dimension, by Corollary \ref{corarm} and \ref{lemmagradarm}, the decay at infinity is enough to guarantee (\ref{decarm}) and (\ref{decarmgrad}), hence all the integrals are well defined and finite and the boundary terms vanish.

\end{proof}

Now we prove Theorem \ref{thdec}.

\begin{proof}
As first, we point out that, in dimension $N\geq 5$, by Lemma \ref{lemmadefH}, condition (\ref{limx'}) is enough to guarantee suitable decay to define $H(u)$.
For any sequence $x_{N}^{k}\to\infty$, it is possible to find a subsequence such that $u^{k}(x)=u(x^{'},x_{N}+x_{N}^{k})$ converges to a profile $\to u^{\infty}$ in the $C^{2,\alpha}_{loc}$ sense. By hypothesis (\ref{decn}), 
\begin{eqnarray}\notag
u^{\infty}(x^{'}_{0},0)=\lim_{k\to\infty}u^{k}(x^{'}_{0},0)=
\lim_{k\to\infty}u(x^{'}_{0},x_{N}^{k})=0
\end{eqnarray}
hence $u^{\infty}\equiv 0$. Since the sequence is arbitrary, by Lemma \ref{lemmasolrad}, $u(x^{'},x_{N})\to 0$ as $x_{N}\to\infty$, uniformly in $x^{'}$, so we can apply Proposition \ref{propstart} to begin the moving plane procedure (see Remark $4$). Now, since we do not know the behaviour of $u$ for $x_{N}\to-\infty$, we have to be careful to exclude the case $\ol=-\infty$. Assume, by contradiction, that $\ol=-\infty$. Then we get $u_{N}\leq 0$ and therefore, since $u_{N}$ satisfies $-\Delta u_{N}=f^{'}(u)u_{N}$, by the strong maximum principle we have $u_{N}<0$, hence it is possible to define, for any $x^{'}\in\R^{N-1}$,
\begin{eqnarray}\notag
\underline{u}(x^{'})=\lim_{x_{N}\to-\infty}u(x^{'},x_{N}).
\end{eqnarray} 
By the Arzelï¿½-Ascoli theorem, it is possible to check that the convergence holds in $C^{2}_{loc}$, hence the profile $\underline{u}$ satisfies
\begin{eqnarray}\notag
\begin{cases}
-\Delta\underline{u}=f(\underline{u}) &\text{in $\R^{N-1}$}\\\notag
\underline{u}>0\\\notag
\underline{u}(x^{'})\to 0 &\text{as $|x^{'}|\to\infty$}\notag
\end{cases}
\end{eqnarray}
Therefore, applying Lemma \ref{lemmadec} to $\underline{u}$, we get that $J(\underline{u})>0$.

However, by relation (\ref{Jgrad}), $J(\underline{u})$ is well defined and finite and
\begin{eqnarray}\notag
J(\underline{u})=\int_{\R^{N-1}}\frac{1}{2}|\nabla \underline{u}|^{2}-F(\underline{u})dx^{'}=\\\notag
\lim_{x_{N}}\mathcal{H}(u,x_{N})=\mathcal{H}(u)=\lim_{x_{N}\to\infty}
\mathcal{H}(u,x_{N})=0,
\end{eqnarray}
a contradiction.

As a consequence, we get that $\ol\in\R$ and $u-u_{\ol}\geq 0$ in $\Sigma_{\ol}$. By the strong maximum principle, we have that $u>u_{\ol}$ or $u\equiv u_{\ol}$ in $\Sigma_{\ol}$. To conclude the proof of the theorem we have to exclude the first possibility.

Assume, by contradiction, that $u>u_{\ol}$ in $\Sigma_{\ol}$. By Proposition \ref{profile1}, applied to the case $M=0$, we can find a sequence of real numbers $\ol-1/k\leq\lambda_{k}<\ol$ and a bounded sequence of points $x^{k}\in\slk$, such that
\begin{eqnarray}\notag
 u(x^{k})<u_{\lambda_{k}}(x^{k}).
\end{eqnarray} 
Up to a subsequence, $x^{k}\to x^{\infty}$, therefore $u(x^{\infty})\leq u_{\ol}(x^{\infty})$. Since we are assuming that $u>u_{\ol}$ in $\Sigma_{\ol}$, we get that $x^{\infty}_{N}=\ol$, but this is a contradiction to the Hopf lemma, as above. To conclude, we observe that the symmetry in the $x_{N}$ variable yields that
\begin{eqnarray}\notag
u(x^{'},x_{N})\to 0 &\text{as $x_{N}\to-\infty$, uniformly in $x^{'}$},
\end{eqnarray}
hence, by the result by Gidas, Ni and Nirenberg in \cite{GNN}, we get the radial symmetry.
\end{proof}

%We say that a solution to equation (\ref{eqp}) is a profile is there exists a sequence $x^{k}\in\R^{N}$ such that, up to a subsequence, $u^{k}(x)=u(x+x^{k})\to u^{\infty}$ in $C^{2,\alpha}_{loc}(\R^{N})$ .

%We say that $u^{\infty}$ is a $j$-profile if between all possible corresponding sequences $x^{k}$, there exists one sequence satisfying $(x^{k},e_{i})=0$ for any $i\neq j$. 

%\begin{lemma}
%Let $u>0$ be a solution to equation (\ref{eqp}) with $f$ satisfying (\ref{condf}). Assume that $u$ satisfies the (\ref{limx'}) but does not satisfy 
%\begin{eqnarray}\notag
%u(x^{'},x_{N})\to 0 &\text{as $x_{N}\to\infty$, uniformly in $x^{'}$}
%\end{eqnarray}
%Then it is possible to find an $N$-profile $u^{\infty}>0$.
%\end{lemma}
%\begin{proof}
%By our hypothesis, there exists $\delta>0$ and a sequence $x_{N}^{k}\to\infty$ such that
%\begin{eqnarray}\notag
%\sup_{x^{'}}u(x^{'},x_{N}^{k})\geq 2\delta,
%\end{eqnarray}
%hence it is possible to find a sequence $(x^{'})^{k}$ such that $u((x^{'})^{k},x_{N}^{k})\geq\delta$.

%Now we notice that, up to a subsequence, $u^{k}(x)=u(x^{'},x_{N}+x_{N}^{k})\to u^{\infty}(x)$ in $C^{2,\alpha}_{loc}$.
%By the (\ref{limx'}), we can find $R>0$ such that $u<\delta/2$ outside the cylinder $\{|x^{'}|\geq R\}$, therefore our sequence has to fulfill $|(x^{'})^{k}|<R$, so, up to a subsequence, $(x^{'})^{k}\to(x^{'})^{\infty}$. As a consequence, we have that
%\begin{eqnarray}
%\delta\leq u((x^{'})^{k},x_{N}^{k})=u^{k}((x^{'})^{k},0)\to u^{\infty}((x^{'})^{\infty},0).
%\end{eqnarray}
%Finally, by the strong maximum principle, we have $u^{\infty}>0$.
%\end{proof}


\begin{thebibliography}{10}

\bibitem{BCN} H. Berestycki, L. Caffarelli, L. Nirenberg \emph{Monotonicity for Elliptic Equations in unbounded Lipschitz domains}. Communications on pure and applied mathematics 50 (1997), 1089-1111

\bibitem{BF} J. Busca, P. Felmer \emph{Qualitative properties of some bounded positive solutions to scalar field equations}. Calc. Var. 13, 191-211 (2001)

\bibitem{D} E. N. Dancer \emph{New solutions of equations on $\R^{N}$}, Annali Scuola Normale Superiore di
Pisa 30 (2001), 535-563.

\bibitem{DF} L. Dupaigne, A. Farina \emph{Liouville theorems for stable solutions of semilinear elliptic equations
with convex nonlinearities}, Nonlinear Analysis 70 (2009), 2882-2888.

\bibitem{GNN} B. Gidas, W.M. Ni, L. Nirenberg \emph{Symmetry of positive solutions of nonlinear elliptic equations in $\R^{N}$}. Mathematical Analysis and Applications Part A. Adv. Math. Suppl. Studies 7A, Academic Press, New York, 369-402 (1981).

\bibitem{GM} A. Gierer, H. Meinhardt \emph{A theory of biological pattern formation}, Kybernetik 12 (1972), 30-39.

\bibitem{GMX} C. Gui, A. Malchiodi, H. Xu \emph{Axial symmetry of some steady state solutions to nonlinear Schrodinger equation}. Proceedings of the AMS, vol $139$, no. 3, 1023-1032, (2010).

\bibitem{K} M. K. Kwong \emph{Uniqueness of positive solution of $\Delta u-u+u^{p}=0$ in $\R^{N}$}, Archive for Rational Mechanics and Analysis 105 (1989) 243-266

\bibitem{M} A. Malchiodi \emph{Some new entire solutions of semilinear elliptic
equations on $\R^{N}$}, Advances in Mathematics 221 (2009), 1843-1909.

\end{thebibliography}
\end{document}